\newtheorem{theorem}{Theorem}[section]
\newtheorem{lemma}[theorem]{Lemma}
\newtheorem{corollary}[theorem]{Corollary}
\newtheorem{proposition}[theorem]{Proposition}
\theoremstyle{definition}
\newtheorem{definition}[theorem]{Definition}
\theoremstyle{remark}
\newtheorem{remark}[theorem]{Remark}
\newcommand{\bn}{{\bm n}}
\newcommand{\bt}{{\bm t}}
\newcommand{\bu}{{\bm u}}
\newcommand{\bv}{{\bm v}}
\newcommand{\bw}{{\bm w}}
\newcommand{\bp}{{\bm p}}
\newcommand{\bbR}{\mathbb{R}}
\newcommand{\bcurl}{{\bf curl}}
\newcommand{\p}{\partial}
\newcommand{\nab}{\nabla}
\newcommand{\bxi}{{\bm \xi}}
\newcommand{\bmu}{{\bm \mu}}
\newcommand{\calT}{\mathcal{T}}
\newcommand{\calE}{\mathcal{E}}
\newcommand{\uD}{\underline{D}}
\newcommand{\hessone}{\underline{D}^2}
\newcommand{\hesstwo}{\nab_\Gamma^2}
\newcommand{\Grad}{\nabla\!\!\!\!\nabla}
\newcommand{\bbP}{\mathbb{P}}
\newcommand{\bq}{{\bm q}}
\newcommand{\bH}{{\bm H}}
\newcommand{\calP}{\mathcal{P}}
\newcommand{\br}{{\bm r}}
\def\avg#1{\{\hspace{-3pt}\{#1\}\hspace{-3pt}\}}
\def\jump#1{[\hspace{-2pt}[#1]\hspace{-2pt}]}
\def\tbar#1{|\hspace{-2pt}|\hspace{-2pt}|#1|\hspace{-2pt}|\hspace{-2pt}|}
\numberwithin{equation}{section}
\begin{document}

\parindent=0in

\title[$C^0$ IP method for surface stream function]{
A $C^0$ interior penalty method for the stream function formulation
of the surface Stokes problem}

\author[M.~Neilan \& H.~Wan]{Michael Neilan \email{\lowercase{neilan@pitt.edu}} \and Hongzhi Wan\email{\lowercase{HOW41@pitt.edu}}}
\thanks{Supported in part by the NSF grant DMS-2309425.}
\date{}
\maketitle

\begin{abstract}
We propose a $C^0$ interior penalty method
for the fourth-order stream function formulation of
the surface Stokes problem. The scheme utilizes
continuous, piecewise polynomial spaces defined
on an approximate surface. We show
that the resulting discretization is positive
definite and derive error estimates
in various norms in terms of the polynomial
degree of the finite element space as well
as the polynomial degree to define the geometry approximation.
A notable feature of the scheme is that it does not explicitly 
depend on the Gauss curvature of the surface. This is achieved
via a novel integration-by-parts formula for the surface biharmonic operator.
\end{abstract}

\thispagestyle{empty}

\section{Introduction}
Let $\Gamma$ be a smooth, simply connected compact oriented 
hypersurface in $\bbR^3$ without boundary and with outward unit normal $\bn$. 
The surface Stokes problem seeks the fluid velocity  $\bu: \Gamma \rightarrow \bbR^3$ with $\bu \cdot \bn = 0$ and the surface fluid pressure $p:\Gamma \to \bbR$ such that
\begin{subequations} \label{eqn:surfaceStokes}
\begin{align} \label{eqn:momentum}
- {\bf P}\,{\rm div}_\Gamma(E_\Gamma(\bu)) + \bu + \nabla_\Gamma p
\begin{aligned}
& = {\bm f} \quad & \text{on } \Gamma,
\end{aligned} \\
{\rm div}_\Gamma\bu
\begin{aligned}
& = 0 \quad & \text{on } \Gamma,
\end{aligned}
\end{align}
\end{subequations}
where ${\bm f} \in L^2(\Gamma)^3$ with ${\bm f} \cdot \bn  = 0$ is a given force vector,
$E_\Gamma(\bu)$ is the deformation tensor, and ${\bf P}$ is the tangential projection operator.
Further details and notation are given in the next section.
The zeroth-order term is included
in \eqref{eqn:momentum} to ensure uniqueness 
of the velocity solution and to avoid technicalities 
related to Killing fields, i.e., non-trivial tangential
vector fields in the kernel of the deformation tensor $E_\Gamma$ \cite{BDL20}.

Surface (Navier)-Stokes equations arise in various application models
including emulsion foams and biological membranes \cite{Scriven60,SlatteryEtal07}, computer graphics \cite{Elcott_stable},
and geophysics \cite{VoigtEtal17,Sasaki15}. As such there has been recent interest in developing
finite element methods for incompressible fluids posed on surfaces.
A natural choice are surface finite element methods (SFEMs)
based on stable Euclidean $\bH^1\times L^2$ conforming velocity-pressure pairs. 
In this approach, finite element spaces are defined on a discrete approximate 
surface through polynomial mappings, and the tangential velocity constraint 
is enforced weakly via penalization or Lagrange multipliers \cite{Fries18,JankuhnEtal18,BradnerEtal22,Hardering23}. 
While popular and relatively straightforward to implement,
this approach requires excess degrees
of freedom, as the velocity is approximated by vectors 
in all of $\bbR^3$, instead of tangential vectors. In addition,
these methods may require a superparametric approximation of
the outward unit normal of the surface to guarantee optimal-order convergence.
Recently it was shown in \cite{HarderingP23} 
that the tangential component of the solution converges
with optimal order using standard isoparametric geometry approximations;
however, $L^2$ errors are suboptimal if affine surface approximations are used.

Alternatively,
non-conforming velocity-pressure pairs
have been proposed and analyzed in \cite{BDL20,LedererEtal20},
where the velocity is approximated using exactly tangential and 
Piola-mapped $\bH(div)$-conforming BDM spaces.
These schemes are analogous to the Euclidean finite element methods in \cite{CockburnEtal07}.
The velocity spaces in these schemes are exactly tangential to the 
discrete surface, but due to their nonconformity, additional edge-integral terms are added to ensure their consistency
and stability. An exception is the recent work \cite{DemlowNeilan24}, where
a $\bH(div)$-conforming space, 
based on the lowest-order Euclidean Mini element, is constructed
that possess sufficient weak-continuity properties to guarantee convergence without including edge-integral terms.

In contrast to SFEM, Trace FEM is a discretization technique for surface 
PDEs based on a background (bulk) mesh in $\bbR^3$.
In this framework, finite element spaces are defined on the three-dimensional mesh,
and the traces of such function are used as the approximating space.
To ensure stability of the resulting scheme and algebraic system,
penalty terms are included in their formulation. 
While quite advantageous for dynamic and coupled fluid computations, 
this approach requires extraneous DOFs, as the spaces are defined on a 3D geometry.

Another discretization technique, and the focus of this paper,
is based on the surface stream function formulation.
If $\Gamma$ is simply connected,
then there exists a unique stream function $\phi\in H^2(\Gamma)$
with $\int_\Gamma \phi = 0$ such 
that $\bu = {\bf{curl}}_{\Gamma} \phi$.
Formally substituting this expression into \eqref{eqn:momentum}
and taking the curl of the resulting equation yields
\begin{equation}\label{eqn:streamfunction}
\frac12 \Delta_{\Gamma}^2 \phi + {\rm div}_{\Gamma} ((K-1) \nab_{\Gamma} \phi) = -{\rm curl}_{\Gamma}{\bm f},
\end{equation}
where $K$ is the Gauss curvature of $\Gamma$.
SFEM and TraceFEM discretizations for \eqref{eqn:streamfunction}
based on a Ciarlet-Raviart mixed formulation (cf.~\cite{CiarletRaviart74}),
are proposed and analyzed in
\cite{Voight12,Reusken20,BrandnerReusken20,BradnerEtal22}.
This approach  introduces an auxiliary unknown $\psi = \Delta_\Gamma \phi$,
and utilizes surface Lagrange finite element spaces in its discretization.
Thus, while the numerical method is relatively simple to implement,
and is supported in current finite element software, the mixed formulation
results in a relatively large number of unknowns and  a saddle-point structure.
In addition, such mixed methods require the computation of an approximate
Gauss curvature, which may require higher-order approximations
of the surface outward normal \cite{BrandnerReusken20,BradnerEtal22}.
Finally, the extension of the Ciarlet-Raviart discretization technique towards
the Stokes problem on surfaces with boundary is less clear, as boundary
conditions for the auxiliary variable $\psi$ may not be explicitly given.

In this paper, we propose and analyze a SFEM method for the surface Stokes problem
\eqref{eqn:momentum} based on a primal $C^0$ interior penalty (IP) method applied 
towards the  formulation \eqref{eqn:streamfunction}.   
Similar to $C^0$ IP methods in the Euclidean case \cite{EngelEtal02,BrennerSung05},
the proposed scheme uses continuous, piecewise polynomial spaces
as the approximation spaces, and consistency 
and symmetry is enforced by interelement contributions
of jumps and averages. This construction is done
on an approximate surface $\Gamma_h$ defined via a polynomial-mapped approximation of $\Gamma$.
A notable feature of the proposed scheme is that it does not require an explicit approximation
of the Gauss curvature.
This is achieved through the use of a novel surface Hessian-type operator 
and integration-by-parts formulas. As a result, 
coercivity and continuity 
properties of the proposed method mostly follow the same arguments as its Euclidean counterpart.
We show that the method converges and derive
explicit error estimates with respect to both the finite element degree $k$
and the degree of the geometric approximation $k_g$.

One advantage of the $C^0$ IP method
is that the discretization represents a positive definite
system involving a single unknown.
In addition, the discrete velocity
is recovered by simply taking the tangential curl
of the discrete stream function. This numerical velocity
is thus exactly tangential to the (approximate) surface
and does not need any ad hoc penalization techniques to enforce this constraint.
Potential  disadvantages of the stream function approach (compared to a velocity-pressure-based formulation) is it requires $\Gamma$ to be simply connected. Furthermore, as the stream function formulation
is fourth-order, the condition number of any  (primal) discretization is expected to scale like $O(h^{-4})$, where $h$ is the mesh width.

Recently, several finite element methods
for the surface biharmonic operator have been
proposed and analyzed. Closely related
to the present work is \cite{LarssonLarson17},
where the authors propose and analyze the lowest-order $C^0$ IP method for the surface 
biharmonic problem on affine approximations.  A variation 
of this approach, utilizing a surface gradient recovery operator, is presented in \cite{ZhangEtal24}.
These discretizations are based on standard integration-by-parts formulas
of the surface biharmonic operator, leading to bilinear forms involving
the products of Laplacians.  Such approaches can be easily formulated
towards the fourth-order problem \eqref{eqn:streamfunction}, although the stability
and convergence of the scheme is unclear due to the indefinite 
low-order terms in the PDE.  This approach also leads to schemes where approximate Gauss
curvatures are required, which may require higher-order geometry approximations
or additional computational resources \cite{BrandnerReusken20,BradnerEtal22,Walker24}.
The approach we take in this manuscript circumvents these issues
through an integration-by-parts procedure.

The rest of the paper is organized as follows.
In the next section, we set the notation and provide
integration-by-parts identities for the surface biharmonic operator.
In Section \ref{sec-Mesh}, we define the discrete surface
approximation $\Gamma_h$ and mappings between $\Gamma$ and $\Gamma_h$.
Section \ref{sec-Method} states the $C^0$ IP method and proves
the continuity and coercivity of the corresponding bilienar form.
In Section \ref{sec-Geo}, we derive estimates of the geometric
inconsistencies of the scheme, and in Section \ref{sec-converge}
we prove error estimates of the $C^0$ IP method in a discrete $H^2$
norm and $H^m$ norms ($m=0,1$). Numerical experiments are 
given in Section \ref{sec-numerics}. Finally, some of the technical proofs
in the paper are provided in the appendix.

\section{Preliminaries}

Let $\Gamma$ be a smooth, simply connected compact 
oriented hypersurface in $\bbR^3$ without boundary.
We denote by $U_\delta$ a $\delta$-neighborhood of
$\Gamma$ with $\delta>0$ sufficiently small such that the
signed distance function $d$ is well-defined in $U_\delta$
(with $d<0$ in the  interior of $\Gamma$). Set $\bn = \nab d$ to be 
the outward unit normal of $\Gamma$, extended to $U_\delta$, where the gradient is understood as a column vector. 
The Weingarten map is ${\bf H} = D^2 d$, the Hessian
matrix of $d$.
The tangential projection operator is given by
\begin{align*}
{\bf P} & = {\bf I} - \bn\otimes \bn.
\end{align*}
The closest point projection is 
\[
\bp(x) = x - d(x) \bn(x).
\]

Given a scalar function $\psi: \Gamma \to \bbR$,
we set its extension $\psi^e:U_\delta \to \bbR$ as $\psi^e(x) = \psi(\bp(x))$
for $x\in U_\delta$. This definition is extended
to vector fields component wise (so that $({\bm\psi}^e)_j = \psi_j^e$).
The surface gradient of $\psi$ is 
\begin{align}\label{eqn:surfaceGrad}
\nabla_{\Gamma}\psi & = {\bf P}\nabla\psi^e= 
\begin{bmatrix}
\underline{D}_{1}\psi \\
\underline{D}_{2}\psi \\
\underline{D}_{3}\psi
\end{bmatrix},
\end{align}
i.e., $\underline{D}_j \psi$ is the $j$th component of
$\nab_\Gamma \psi$.
 The surface curl operator of $\psi$
is 
\[
\bcurl_{\Gamma}\psi  = \bn \times \nabla_{\Gamma}\psi.
\]
For a vector-valued function $\bv = [v_1,v_2,v_3]^\intercal$, its
Jacobian $\Grad \bv$ satisfies $(\Grad \bv)_{i,j} = \frac{\p v_i}{\p x_j}$ for $i,j=1,2,3$.
The surface Jacobian and surface deformation tensor are defined, respectively, as
\begin{equation}
\label{eqn:surfaceJacobian}
\begin{split}
\Grad_\Gamma \bv & = {\bf P}\Grad\bv^e {\bf P}, \\
E_{\Gamma}(\bv) &= \frac{1}{2}\left(\Grad_{\Gamma}\bv + \Grad_{\Gamma}\bv^\intercal\right).
\end{split}
\end{equation}
The surface divergence operator is defined as the trace of the Jacobian, ${\rm div}_{\Gamma}\bv = {\rm tr}(\Grad_\Gamma \bv)$, and the scalar curl operator is ${\rm curl}_{\Gamma}\bv = {\rm div}_\Gamma (\bv\times \bn)$.

Next, we provide notions of several second-order operators
 that will be used throughout the paper.
\begin{definition} \label{def:2.1}
We define the nonsymmetric surface Hessian $\hessone_\Gamma \psi:\Gamma\to \bbR^{3\times 3}$ of a scalar function $\psi$ such that
 $(\hessone_\Gamma \psi)_{i,j} = \uD_j \uD_i \psi$ for $i,j=1,2,3$.
 The projected surface Hessian $\hesstwo \psi:\Gamma\to \bbR^{3\times 3}$
 satisfies $\left(\nabla_{\Gamma}^{2} \psi\right)_{i,j} = \left(\Grad_{\Gamma}\nabla_{\Gamma} \psi\right)_{i,j}$
 for $i,j=1,2,3.$ Finally, we define the second-order (Hessian-like) operator
\[
H_\Gamma(\psi):= E_\Gamma(\bcurl_\Gamma \psi),
\]
where we recall $E_\Gamma(\cdot)$ is the surface deformation tensor given by \eqref{eqn:surfaceJacobian}.
The Laplace-Beltrami operator is the trace for either $\hessone(\cdot)$ or $\hesstwo(\cdot)$, i.e.,
\[
\Delta_\Gamma \psi = {\rm div}_{\Gamma} \nab_{\Gamma} \psi
= {\rm tr}(\nab_\Gamma^2 \psi) = {\rm tr}(\underline{D}^2_\Gamma \psi) = \sum_{i=1}^3 \underline{D}_i^2 \psi.
\]
\end{definition}

\begin{remark}
Note that $\nab_{\Gamma}^2 \psi$ is symmetric (cf.~Lemma \ref{lem:HessIdentities} and \cite{Delfour00}),
whereas $\underline{D}^2_\Gamma \psi$ is not. In particular, there holds \cite[Lemma 2.6]{DziukElliott13}
\begin{equation}\label{eqn:NotSymmetric}
(\underline{D}^2_\Gamma \psi)^\intercal - \underline{D}^2_\Gamma \psi
= \bn \otimes ({\bf H}\nab_\Gamma \psi) - ({\bf H}\nab_\Gamma \psi)\otimes \bn.
\end{equation}
\end{remark}

The next two lemmas provide explicit relationships
between the three notions of the surface Hessian operators.

\begin{lemma}\label{lem:HessIdentities}
    There holds for $\psi:\Gamma \to \bbR$,
\begin{equation}\label{nHessIdentity}
\nabla_{\Gamma}^{2}\psi = {\bf P}\underline{D}^{2}\psi 
= {\bf P}\nabla^{2}\psi^e {\bf P}.
\end{equation}
\end{lemma}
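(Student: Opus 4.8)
The plan is to prove both equalities by a direct componentwise computation in Cartesian coordinates, working throughout with the normal extension $\psi^e$ and exploiting two elementary facts: that $\bn\cdot\nab\psi^e = 0$ on $\Gamma$, and that ${\bf P}\bn = 0$ (together with ${\bf P}^2={\bf P}$). The first fact holds because $\psi^e$ is constant along normal lines, since $\bp(x+t\bn)=\bp(x)$ gives $\psi^e(x+t\bn)=\psi^e(x)$, so $\nab\psi^e$ has no normal component on $\Gamma$. Before the main calculation I would record the derivative of the projection matrix: from $P_{ik}=\delta_{ik}-n_in_k$ and $\p_l n_i = \p_l\p_i d = (D^2 d)_{il} = H_{il}$, one gets $\p_l P_{ik} = -H_{il}n_k - n_i H_{kl}$. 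This single identity drives everything.

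For the equality ${\bf P}\uD^2_\Gamma\psi = {\bf P}\nab^2\psi^e{\bf P}$, I would expand the nonsymmetric Hessian using $\uD_i\psi = \sum_k P_{ik}\p_k\psi^e$, namely $(\uD^2_\Gamma\psi)_{ij} = \uD_j\uD_i\psi = \sum_l P_{jl}\,\p_l\!\big(\sum_k P_{ik}\p_k\psi^e\big)$. The product rule splits this into a pure second-derivative term $\sum_{l,k}P_{jl}P_{ik}\p_l\p_k\psi^e = ({\bf P}\nab^2\psi^e{\bf P})_{ij}$ and a first-derivative correction $\sum_{l,k}P_{jl}(\p_l P_{ik})\p_k\psi^e$. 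Substituting $\p_l P_{ik}=-H_{il}n_k - n_i H_{kl}$, the term carrying $n_k$ contracts against $\sum_k n_k\p_k\psi^e = \bn\cdot\nab\psi^e = 0$ and drops out, leaving the rank-one piece $-n_i({\bf P}{\bf H}\nab\psi^e)_j$. This already shows
\[
\uD^2_\Gamma\psi = {\bf P}\,\nab^2\psi^e\,{\bf P} - \bn\otimes({\bf P}{\bf H}\nab\psi^e) \quad\text{on }\Gamma,
\]
which is manifestly nonsymmetric, consistent with \eqref{eqn:NotSymmetric}. Left-multiplying by ${\bf P}$ and invoking ${\bf P}^2={\bf P}$ and ${\bf P}\bn=0$ annihilates the correction term and yields the claim.

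For the first equality $\hesstwo\psi = {\bf P}\nab^2\psi^e{\bf P}$, I would use that $\hesstwo\psi = {\bf P}\,\Grad(\nab_\Gamma\psi)^e\,{\bf P}$ is independent of the chosen extension of $\nab_\Gamma\psi$, and take the convenient extension ${\bf P}\nab\psi^e$ defined on $U_\delta$. The Jacobian of this field produces exactly the same expansion as above, but now it is sandwiched by ${\bf P}$ on both sides: the $n_i$-correction is killed by the left factor via ${\bf P}\bn=0$, while the $n_k$-correction is killed by $\bn\cdot\nab\psi^e=0$, so no rank-one piece ever appears and one lands directly on ${\bf P}\nab^2\psi^e{\bf P}$.

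I expect the main obstacle to be bookkeeping rather than conceptual: one must keep straight which of the two correction terms survives each projection, and in particular recognize that $\uD^2_\Gamma\psi$ by itself is \emph{not} equal to ${\bf P}\nab^2\psi^e{\bf P}$ — the left projection in the statement is essential and is precisely what removes the curvature-induced normal component. Care with the index placement in the transpose-asymmetric object $\uD^2_\Gamma\psi$ (whose $(i,j)$ entry is $\uD_j\uD_i\psi$) is the only place an error could creep in.
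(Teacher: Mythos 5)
Your proof is correct and follows essentially the same route as the paper's: both hinge on differentiating the extended surface gradient ${\bf P}\nabla\psi^e$ by the product rule, using $\bn\cdot\nabla\psi^e=0$ on $\Gamma$ to kill one correction term, and identifying the surviving correction as the rank-one normal piece $\bn\otimes({\bf H}\nabla\psi^e)$, which the outer projection annihilates via ${\bf P}\bn=0$. The only difference is cosmetic — you organize the computation componentwise through $\p_l P_{ik}=-H_{il}n_k-n_iH_{kl}$ and verify the two equalities of \eqref{nHessIdentity} as two instances of the same expansion, whereas the paper gets the first equality directly from the row structure of $\Grad_\Gamma\bv$ — so no further comment is needed.
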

\begin{proof}
To prove the first inequality in \eqref{nHessIdentity}, we first 
consider a vector-valued function
$\bv = [v_1,v_2,v_3]^\intercal: \Gamma \rightarrow \mathbb{R}^{3}$. Then
since  ${\bf P}^\intercal = {\bf P}$,  the $i$th row of $\Grad \bv^e {\bf P}$ is
\[
(\Grad \bv^e {\bf P})_{i,:} = (\nabla v^e_{i})^{\intercal}{\bf P} = \left({\bf P}\nabla v^e_{i}\right)^\intercal = \nabla_{\Gamma}v_{i}^\intercal,
\]
which implies
\[
\Grad_{\Gamma}\bv= {\bf P}\Grad\bv^e {\bf P} = {\bf P}
\begin{bmatrix}
\nabla_{\Gamma}v_{1}^{\intercal} \\
\nabla_{\Gamma}v_{2}^{\intercal} \\
\nabla_{\Gamma}v_{3}^{\intercal}
\end{bmatrix}.
\]
Setting $\bv = \nabla_{\Gamma}\psi$ proves
the first equality in \eqref{nHessIdentity}:
\[
\nabla_{\Gamma}^{2}\psi = {\bf P}
\begin{bmatrix}
\underline{D}_{1}\underline{D}_{1}\psi & \underline{D}_{2}\underline{D}_{1}\psi & \underline{D}_{3}\underline{D}_{1}\psi \\
\underline{D}_{1}\underline{D}_{2}\psi & \underline{D}_{2}\underline{D}_{2}\psi & \underline{D}_{3}\underline{D}_{2}\psi \\
\underline{D}_{1}\underline{D}_{3}\psi & \underline{D}_{2}\underline{D}_{3}\psi & \underline{D}_{3}\underline{D}_{3}\psi
\end{bmatrix}
= {\bf P}\underline{D}^{2}\psi.
\]

To prove the second equality in \eqref{nHessIdentity},
we use the product rules
\begin{align*}
\Grad(f{\bm g}) & = {\bm g}\nabla f^{\intercal} + f\Grad{\bm g}, \\
\nabla({\bm f} \cdot {\bm g}) & = \Grad{\bm f}^{\intercal }{\bm g} + \Grad{\bm g}^{\intercal}{\bm f},
\end{align*}
and the identity $\bn \cdot \nab \psi^e = \bn \cdot \nab_\Gamma \psi=0$ 
on $\Gamma$
to conclude
\begin{equation}
\label{eqn:Intermed}
\begin{split}
\Grad (\nabla_{\Gamma}\psi)^e & = \Grad({\bf P}\nabla\psi^e) \\
& = \Grad(\nabla\psi^e - (\bn \cdot \nabla\psi^e)\bn) \\
& = \nabla^{2}\psi^e - \bn\nabla(\bn \cdot \nabla\psi^e)^{\intercal}
- (\bn \cdot \nabla\psi^e)\Grad\bn \\
& = \nabla^{2}\psi^e - \bn \otimes \left(\Grad\bn^{\intercal}\nabla\psi^e + \nabla^{2}\psi^{e}\bn\right)\\ 
& = {\bf P} \nabla^{2}\psi^e - \bn \otimes ({\bf H}\nabla\psi^e).
\end{split}
\end{equation}
Since ${\bf P}^{2} = {\bf P}$ and ${\bf P}\bn = 0$, 
there holds
\[
\nabla_{\Gamma}^{2}\psi = {\bf P}\Grad(\nabla_{\Gamma}\psi)^e {\bf P} = {\bf P}\nabla^{2}\psi^e {\bf P}.
\]
\end{proof}

\begin{lemma}
For a vector $\bxi = \left[\xi_{1}, \xi_{2}, \xi_{3}\right]^\intercal \in \mathbb{R}^{3}$, define
the $3\times 3$ skew-symmetric matrix
\[
\bxi^{\times} = {\rm mskw}(\bxi) = 
\left[
\begin{array}{c c c}
0 & - \xi_{3} & \xi_{2} \\
\xi_{3} & 0 & - \xi_{1} \\
- \xi_{2} & \xi_{1} & 0
\end{array}
\right].
\]
Then there holds
\begin{align}\label{eqn:streamGrad}
\Grad_{\Gamma}\bcurl_\Gamma \psi & = \bn^\times \nab_\Gamma^2 \psi = \bn^{\times}\underline{D}^{2}\psi,\\
\label{eqn:deformGrad}
H_\Gamma(\psi) & = \frac12 (\bn^\times \nab_\Gamma^2 \psi - \nab_\Gamma^2 \psi \bn^\times) = \frac{1}{2}\left(\bn^{\times}\underline{D}^{2}\psi - \underline{D}^{2}\psi^\intercal\bn^{\times}\right).
\end{align}
\end{lemma}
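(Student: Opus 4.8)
The plan is to reduce both identities to the single nontrivial fact $\Grad_\Gamma \bcurl_\Gamma\psi = \bn^\times\nab_\Gamma^2\psi$, supplemented by a few algebraic observations about the skew matrix $\bn^\times$. First I would record the elementary identities
\[
\bcurl_\Gamma\psi = \bn\times\nabla_\Gamma\psi = \bn^\times\nabla_\Gamma\psi,\qquad {\bf P}\bn^\times = \bn^\times = \bn^\times{\bf P},
\]
where the second line follows from $\bn^\times\bn = \bn\times\bn = 0$ together with the skew-symmetry $(\bn^\times)^\intercal = -\bn^\times$. Granting the first equality in \eqref{eqn:streamGrad}, the second equality $\bn^\times\nab_\Gamma^2\psi = \bn^\times\uD^2\psi$ is then immediate from Lemma \ref{lem:HessIdentities} (namely $\nab_\Gamma^2\psi = {\bf P}\uD^2\psi$) and $\bn^\times{\bf P} = \bn^\times$. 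Thus the crux is the first equality of \eqref{eqn:streamGrad}.

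To establish it, set $\bw = \nabla_\Gamma\psi$ and start from the definition $\Grad_\Gamma\bcurl_\Gamma\psi = {\bf P}\,\Grad(\bn^\times\bw^e)\,{\bf P}$, where I use that $\bn$ is constant along normals so that $(\bn^\times\bw)^e = \bn^\times\bw^e$. Applying the product rule columnwise gives $\Grad(\bn^\times\bw^e) = \bn^\times\Grad\bw^e + G$, where the $k$th column of $G$ is $(\p_k\bn^\times)\bw^e$. Since ${\rm mskw}(\cdot)$ is linear and $\p_k\bn = {\bf H}\bm{e}_k$ (the $k$th column of ${\bf H}=\Grad\bn$), one has $\p_k\bn^\times = ({\bf H}\bm{e}_k)^\times$, so the $k$th column of $G$ equals $({\bf H}\bm{e}_k)\times\bw^e$. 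For the first term, ${\bf P}\bn^\times\Grad\bw^e{\bf P} = \bn^\times({\bf P}\Grad\bw^e{\bf P}) = \bn^\times\nab_\Gamma^2\psi$, using ${\bf P}\bn^\times = \bn^\times = \bn^\times{\bf P}$ and the definition of $\nab_\Gamma^2\psi$.

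The main obstacle, and the reason the scheme ends up curvature-free, is to show that the curvature contribution ${\bf P}G{\bf P}$ vanishes. I expect this to fall out cleanly from tangentiality: on $\Gamma$ we have ${\bf H}\bn = 0$ with ${\bf H}$ symmetric, so each ${\bf H}\bm{e}_k$ lies in the tangent plane, and $\bw^e = \nabla_\Gamma\psi$ is tangential as well; hence each column $({\bf H}\bm{e}_k)\times\bw^e$ of $G$ is a cross product of two tangential vectors and is therefore parallel to $\bn$. Consequently ${\bf P}G = 0$, and a fortiori ${\bf P}G{\bf P} = 0$, which finishes the first equality. The only delicate point is the correct differentiation of the position-dependent matrix $\bn^\times$ through the linearity of ${\rm mskw}(\cdot)$, after which the tangentiality of both factors renders the projection step transparent.

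Finally, \eqref{eqn:deformGrad} follows by symmetrization. Writing $H_\Gamma(\psi) = \tfrac12\big(\Grad_\Gamma\bcurl_\Gamma\psi + (\Grad_\Gamma\bcurl_\Gamma\psi)^\intercal\big)$ and inserting \eqref{eqn:streamGrad}, I would use $(\bn^\times)^\intercal = -\bn^\times$ together with the symmetry of $\nab_\Gamma^2\psi$ recorded above to obtain $H_\Gamma(\psi) = \tfrac12\big(\bn^\times\nab_\Gamma^2\psi - \nab_\Gamma^2\psi\,\bn^\times\big)$. The equivalent $\uD^2$ form then follows from $\nab_\Gamma^2\psi = {\bf P}\uD^2\psi = (\uD^2\psi)^\intercal{\bf P}$ (symmetry again) combined with ${\bf P}\bn^\times = \bn^\times = \bn^\times{\bf P}$, which turns $\bn^\times\nab_\Gamma^2\psi$ into $\bn^\times\uD^2\psi$ and $\nab_\Gamma^2\psi\,\bn^\times$ into $(\uD^2\psi)^\intercal\bn^\times$.
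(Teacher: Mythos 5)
Your proposal is correct and follows essentially the same route as the paper: apply the product rule to $\Grad(\bn\times\nabla_\Gamma\psi)^e$, show the projection annihilates the curvature term, identify the surviving term with $\bn^\times\nab_\Gamma^2\psi$ via ${\bf P}\bn^\times=\bn^\times=\bn^\times{\bf P}$, and then symmetrize using $(\bn^\times)^\intercal=-\bn^\times$ and the symmetry of $\nab_\Gamma^2\psi$. The only (cosmetic) difference is that you dispatch the curvature term ${\bf P}\big((\nabla_\Gamma\psi)^\times{\bf H}\big)$ by the column-wise observation that a cross product of two tangential vectors is parallel to $\bn$, whereas the paper computes ${\bf P}({\bf P}\bxi)^\times=(\bxi\times\bn)\otimes\bn$ explicitly and then uses ${\bf H}\bn=0$; both rest on the same facts.
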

\begin{proof}
Note that
\[
\bn^{\times}{\bf P} = \bn^{\times} = {\bf P}\bn^{\times},
\quad\text{and}\quad {\bf H}{\bf P} = {\bf H} = {\bf P}{\bf H}.
\]
Thus, using the identity
$\bn\times \nab_\Gamma \psi = \bn^\times \nab_\Gamma \psi = - (\nab_\Gamma \psi)^\times \bn$, we have
\begin{align*}
\Grad_{\Gamma}\mathbf{curl}_{\Gamma}\psi
& = {\bf P}\Grad\left(\bn \times \nabla_{\Gamma}\psi\right)^e {\bf P} \\
& = {\bf P}\left(\bn^{\times}\Grad(\nabla_{\Gamma}\psi)^e - (\nabla_{\Gamma}\psi)^{\times}{\bf H}\right){\bf P} \\
& = \bn^{\times}\left(\Grad(\nabla_{\Gamma}\psi)^e\right){\bf P} - {\bf P}(\nabla_{\Gamma}\psi)^{\times}{\bf H}.
\end{align*}
Next, a direct calculation shows 

\begin{align*}
{\bf P}({\bf P}\bxi)^{\times} & =
\begin{bmatrix}
n_{1}\left(n_{3}\xi_{2} - n_{2}\xi_{3}\right) & n_{2}(n_{3}\xi_{2} - \xi_{3}n_{2}) & n_3(\xi_{2}n_{3} - n_{2}\xi_{3}) \\
n_1(\xi_{3}n_{1} - n_{3}\xi_{1}) & n_{2}\left(n_{1}\xi_{3} - n_{3}\xi_{1}\right) & n_3(n_{1}\xi_{3} - \xi_{1}n_{3}) \\
n_{1}(n_{2}\xi_{1} - \xi_{2}n_{1}) & n_2(\xi_{1}n_{2} - n_{1}\xi_{2}) & n_{3}\left(n_{2}\xi_{1} - n_{1}\xi_{2}\right)
\end{bmatrix}
\\
& = (\bxi \times \bn)\otimes \bn\qquad \forall \bxi\in \bbR^3.
\end{align*}
Hence, using ${\bf H}\bn = 0$ we obtain 
\[
{\bf P}(\nabla_{\Gamma}\psi)^{\times}{\bf H} = {\bf P}\left({\bf P}\nabla\psi^e\right)^{\times}{\bf H} = (\nabla\psi^e \times \bn) \otimes \bn {\bf H} = 0,
\]
and therefore,
\[
\Grad_{\Gamma}\mathbf{curl}_{\Gamma}\psi
= \bn^{\times}\left(\Grad(\nabla_{\Gamma}\psi)^e\right){\bf P}.
\]
We then use \eqref{eqn:Intermed} and \eqref{nHessIdentity}, along 
with $\bn^{\times}\bn = 0$ to obtain
\begin{align*}
\Grad_{\Gamma}\mathbf{curl}_{\Gamma}\psi& = \bn^{\times}\left({\bf P}\nabla^{2}\psi^e - \bn\otimes ({\bf H}\nab \psi^e) \right) {\bf P} \\
& = \bn^{\times}{\bf P}\nabla^{2}\psi^e {\bf P}
 = \bn^{\times}\nabla_{\Gamma}^{2}\psi.
\end{align*}
The other equalities in \eqref{eqn:streamGrad}--\eqref{eqn:deformGrad} then
follow from the identities $\left(\bxi^{\times}\right)^{\intercal} = - \bxi^{\times}$, $\nab_\Gamma^2 \psi = {\bf P} \underline{D}^2 \psi$, and $\bn^\times \bn = 0$.
\end{proof}

\subsection{Integration-by-parts identities for the surface biharmonic operator}
In this section, we derive integration-by-parts identities
for the biharmonic operator $\Delta_\Gamma^2(\cdot):=\Delta_\Gamma (\Delta_\Gamma(\cdot))$
over a a simply connected sub-domain $S\subset \Gamma$ with boundary.
Such results will motivate the design
of the $C^0$ interior penalty method discussed in the next section.
Our starting point is a well-known integration-by-parts identity.
\begin{lemma}\label{lem:IBP}
Let $S\subset \Gamma$, and let $\bmu_S$ denote the outward unit co-normal of $\p S$.
Then there holds for all sufficiently 
smooth functions $\phi,\psi$ on $S$,
  \begin{equation}\label{eqn:IBP}
\int_S \psi \underline{D}_i \phi = -\int_{S} \phi \underline{D}_i \psi +\int_S \phi \psi {\rm tr}({\bf H}) n_i +\int_{\p S} \phi \psi (\mu_S)_i\quad i=1,2,3.
\end{equation}
\end{lemma}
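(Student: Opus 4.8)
The plan is to reduce \eqref{eqn:IBP} to the surface divergence theorem applied to a judiciously chosen vector field, combined with the Leibniz rule for the tangential derivatives $\uD_i$.

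First I would record the general surface divergence theorem on $S$: for a sufficiently smooth (not necessarily tangential) vector field $\bv$,
\[
\int_S {\rm div}_\Gamma \bv = \int_S ({\rm tr}\,{\bf H})\,(\bv\cdot\bn) + \int_{\p S}\bv\cdot\bmu_S .
\]
This follows from the classical tangential divergence theorem for tangential fields (see, e.g., \cite{DziukElliott13}) by splitting $\bv = {\bf P}\bv + (\bv\cdot\bn)\bn$. The tangential part ${\bf P}\bv$ contributes only the boundary integral, since the co-normal $\bmu_S$ is tangential so that $({\bf P}\bv)\cdot\bmu_S = \bv\cdot\bmu_S$ on $\p S$. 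For the normal part, the product rule together with $\nab_\Gamma(\bv\cdot\bn)\cdot\bn = 0$ and the identity $\Grad_\Gamma \bn = {\bf H}$ (whence ${\rm div}_\Gamma\bn = {\rm tr}({\bf H})$) gives ${\rm div}_\Gamma\big((\bv\cdot\bn)\bn\big) = (\bv\cdot\bn)\,{\rm tr}({\bf H})$.

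Next I would specialize to $\bv = g\,{\bf e}_i$, where ${\bf e}_i$ is the $i$th Euclidean coordinate vector and $g$ is scalar on $S$. Since ${\bf e}_i$ is constant, $\Grad_\Gamma{\bf e}_i = 0$, so ${\rm div}_\Gamma(g\,{\bf e}_i) = \nab_\Gamma g\cdot{\bf e}_i = \uD_i g$; moreover $\bv\cdot\bn = g\,n_i$ and $\bv\cdot\bmu_S = g\,(\mu_S)_i$. The divergence theorem then yields the single-derivative identity $\int_S \uD_i g = \int_S g\,{\rm tr}({\bf H})\,n_i + \int_{\p S} g\,(\mu_S)_i$. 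Finally I would take $g = \phi\psi$; because $\uD_i$ is a first-order tangential derivation and the extension of a product is the product of extensions, the Leibniz rule $\uD_i(\phi\psi) = \psi\,\uD_i\phi + \phi\,\uD_i\psi$ holds on $\Gamma$, and substituting and moving $\int_S \phi\,\uD_i\psi$ to the right-hand side produces exactly \eqref{eqn:IBP}.

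The only genuinely geometric input is the surface divergence theorem for tangential fields and the companion identity ${\rm div}_\Gamma\bn = {\rm tr}({\bf H})$; since these are classical I expect to cite rather than reprove them, after which the argument is pure product-rule bookkeeping. The one place to be careful is the tangentiality and orientation of the co-normal $\bmu_S$, ensuring that the boundary integrand reduces cleanly to $g\,(\mu_S)_i$, and confirming that the extension satisfies $(\phi\psi)^e = \phi^e\psi^e$ so that the Leibniz rule transfers correctly from $U_\delta$ to $\Gamma$.
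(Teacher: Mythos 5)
Your argument is correct. The paper itself offers no proof of this lemma---it is stated as a ``well-known integration-by-parts identity'' (it is essentially Theorem~2.10 / formula~(2.2) of the Dziuk--Elliott reference \cite{DziukElliott13})---and what you write is exactly the canonical derivation the authors are implicitly relying on: the single-derivative identity $\int_S \uD_i g = \int_S g\,{\rm tr}({\bf H})\,n_i + \int_{\p S} g\,(\mu_S)_i$ applied to $g=\phi\psi$ together with the Leibniz rule, which holds because $(\phi\psi)^e=\phi^e\psi^e$ and ${\bf P}$ acts linearly. The only cosmetic remark is that your intermediate scalar identity is itself the form usually taken as the primitive statement in the surface-PDE literature, so you could cite it directly instead of re-deriving it from the tangential divergence theorem via the splitting $\bv={\bf P}\bv+(\bv\cdot\bn)\bn$; that re-derivation is nevertheless correct (note ${\rm tr}({\bf P}A{\bf P})={\rm tr}({\bf P}A)$, so the two common conventions for ${\rm div}_\Gamma$ of a non-tangential field agree, and ${\rm div}_\Gamma\bn={\rm tr}({\bf H})$ since ${\bf H}\bn=0$).
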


Repeated applications of Lemma \ref{lem:IBP} immediately yield the following result.
\begin{lemma}\label{lem:LapLap}
There holds for sufficient smooth functions $\phi$ and $\psi$,
\begin{equation}
\label{eqn:LapLap}
\begin{split}
    \int_S (\Delta_\Gamma^2 \phi) \psi 
    & = \int_S (\Delta_\Gamma \phi)(\Delta_\Gamma \psi) - \int_{\p S} \Delta_\Gamma \phi (\nab_{\Gamma} \psi\cdot \bmu_S) 
    +\int_{\p S} (\nab_\Gamma \Delta_\Gamma \phi \cdot \bmu_S) \psi.
    \end{split}
\end{equation}
\end{lemma}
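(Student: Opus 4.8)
The plan is to prove \eqref{eqn:LapLap} by applying the basic integration-by-parts identity \eqref{eqn:IBP} twice, once to move the outer Laplace-Beltrami operator off of $\Delta_\Gamma \phi$ and once to handle the resulting surface-gradient term. Since $\Delta_\Gamma = \sum_{i=1}^3 \underline{D}_i^2$ (Definition \ref{def:2.1}), I would write $\Delta_\Gamma^2 \phi = \Delta_\Gamma(\Delta_\Gamma \phi) = \sum_i \underline{D}_i \underline{D}_i (\Delta_\Gamma \phi)$ and integrate $\int_S (\Delta_\Gamma^2 \phi)\,\psi$ against $\psi$.

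First I would apply \eqref{eqn:IBP} to each term $\int_S \psi\, \underline{D}_i(\underline{D}_i \Delta_\Gamma \phi)$, treating $\underline{D}_i \Delta_\Gamma \phi$ as the function being differentiated. This produces a bulk term $-\int_S (\underline{D}_i \Delta_\Gamma \phi)(\underline{D}_i \psi)$, a curvature term $\int_S \psi(\underline{D}_i \Delta_\Gamma \phi)\,{\rm tr}({\bf H})\,n_i$, and a boundary term $\int_{\p S} \psi(\underline{D}_i \Delta_\Gamma \phi)(\mu_S)_i$. Summing over $i$, the boundary contribution assembles into $\int_{\p S}(\nab_\Gamma \Delta_\Gamma \phi \cdot \bmu_S)\,\psi$, which is exactly the last term in \eqref{eqn:LapLap}. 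The curvature term $\sum_i \psi(\underline{D}_i \Delta_\Gamma \phi)\,{\rm tr}({\bf H})\,n_i$ vanishes because $\sum_i (\underline{D}_i \Delta_\Gamma \phi)\,n_i = \bn \cdot \nab_\Gamma(\Delta_\Gamma \phi) = 0$, as the surface gradient is tangential by \eqref{eqn:surfaceGrad} and ${\bf P}\bn = 0$.

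Next I would integrate the remaining bulk term $-\sum_i \int_S (\underline{D}_i \Delta_\Gamma \phi)(\underline{D}_i \psi) = -\int_S \nab_\Gamma(\Delta_\Gamma \phi)\cdot \nab_\Gamma \psi$ by parts once more, now moving $\underline{D}_i$ off of $\Delta_\Gamma \phi$ onto $(\underline{D}_i \psi)$. Applying \eqref{eqn:IBP} with $\phi \leftarrow \Delta_\Gamma \phi$ and $\psi \leftarrow \underline{D}_i \psi$ gives $+\int_S (\Delta_\Gamma \phi)\,\underline{D}_i(\underline{D}_i \psi)$, a curvature term, and a boundary term. Summing over $i$, the first group becomes $\int_S (\Delta_\Gamma \phi)(\Delta_\Gamma \psi)$, the leading term of \eqref{eqn:LapLap}; the curvature term again carries a factor $\sum_i (\underline{D}_i \psi)\,n_i = \bn\cdot\nab_\Gamma \psi = 0$ and drops out; and the boundary term assembles into $-\int_{\p S} \Delta_\Gamma \phi\,(\nab_\Gamma \psi \cdot \bmu_S)$, matching the middle term. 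Collecting everything yields \eqref{eqn:LapLap}.

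The calculation is essentially bookkeeping, so the main subtlety—rather than an obstacle—is ensuring that both curvature terms generated by \eqref{eqn:IBP} actually cancel, which rests on the tangentiality identity $\bn\cdot\nab_\Gamma(\cdot)=0$; if one used non-tangential derivatives these terms would survive and spoil the clean form. A secondary point requiring care is the second integration by parts, where \eqref{eqn:IBP} must be applied componentwise with $\underline{D}_i \psi$ in the role of the test function, and one should verify that no extra terms arise from reassembling $\sum_i \underline{D}_i(\underline{D}_i \psi)$ into $\Delta_\Gamma \psi$; this is immediate from the definition of $\Delta_\Gamma$ in Definition \ref{def:2.1}.
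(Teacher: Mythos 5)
Your proof is correct and follows exactly the route the paper intends: the paper's entire justification is that the lemma follows from "repeated applications of Lemma \ref{lem:IBP}," and your two applications of \eqref{eqn:IBP}, together with the observation that the curvature terms vanish because $\bn\cdot\nab_\Gamma(\cdot)=0$, fill in precisely those details. Nothing is missing.
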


In addition, applications of Lemma \ref{lem:IBP}
also lead to an integration-by-parts identity involving
the projected surface Hessian operator $\nab_\Gamma^2(\cdot)$.
The derivation of the following lemma is a bit more involved, and its proof
is given in Appendix \ref{app:ProofHessForm}.
\begin{lemma}\label{lem:HessForm}
There holds for sufficiently smooth functions $\phi,\psi$,
\begin{equation}\label{eqn:HessForm}
\begin{split}
\int_S (\Delta_\Gamma^2 \phi) \psi 
&  = \int_S \left(\nab^2_\Gamma \phi:\nab^2_\Gamma \psi +K \nab_\Gamma \phi\cdot \nab_\Gamma \psi\right)\\ 
&\qquad-\int_{\p S} \bmu_S^\intercal \nab_\Gamma^2 \phi \nab_\Gamma \psi
  +\int_{\p S} (\nab_\Gamma \Delta_\Gamma \phi\cdot \bmu_S)\psi,
\end{split}
\end{equation}
where $K$ is the Gauss curvature of $\Gamma$.
\end{lemma}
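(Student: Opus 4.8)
The plan is to deduce \eqref{eqn:HessForm} from the already-established formula \eqref{eqn:LapLap}. The two identities share the same left-hand side $\int_S(\Delta_\Gamma^2\phi)\psi$ and the same last boundary integral $\int_{\partial S}(\nabla_\Gamma\Delta_\Gamma\phi\cdot\bmu_S)\psi$, so subtracting one from the other reduces the claim to the single ``Hessian versus squared-Laplacian'' identity
\[
\int_S\!\big(\nabla_\Gamma^2\phi:\nabla_\Gamma^2\psi + K\,\nabla_\Gamma\phi\cdot\nabla_\Gamma\psi\big)
= \int_S(\Delta_\Gamma\phi)(\Delta_\Gamma\psi)
- \int_{\partial S}\Delta_\Gamma\phi\,(\nabla_\Gamma\psi\cdot\bmu_S)
+ \int_{\partial S}\bmu_S^\intercal\nabla_\Gamma^2\phi\,\nabla_\Gamma\psi .
\]
This is convenient because it concentrates all of the genuinely geometric content into the single interior Gauss-curvature term.

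My strategy is to show that \emph{both} sides equal $-\int_S\nabla_\Gamma\psi\cdot\nabla_\Gamma\Delta_\Gamma\phi+\int_{\partial S}\bmu_S^\intercal\nabla_\Gamma^2\phi\,\nabla_\Gamma\psi$. For the right-hand side this is immediate: a single application of \eqref{eqn:IBP}, summed against $\underline{D}_i$ with the mean-curvature term dropping out because $\bn\cdot\nabla_\Gamma\psi=0$, gives $\int_S(\Delta_\Gamma\phi)(\Delta_\Gamma\psi)=-\int_S\nabla_\Gamma\psi\cdot\nabla_\Gamma\Delta_\Gamma\phi+\int_{\partial S}\Delta_\Gamma\phi\,(\nabla_\Gamma\psi\cdot\bmu_S)$, and the $\Delta_\Gamma\phi$ boundary terms then cancel. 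For the left-hand side I would first record the pointwise splitting
\[
\nabla_\Gamma^2\phi:\nabla_\Gamma^2\psi
= \underline{D}^2\phi:\underline{D}^2\psi - ({\bf H}\nabla_\Gamma\phi)\cdot({\bf H}\nabla_\Gamma\psi),
\]
which follows from $\nabla_\Gamma^2\phi={\bf P}\,\underline{D}^2\phi$ (Lemma \ref{lem:HessIdentities}), the expansion ${\bf P}={\bf I}-\bn\otimes\bn$, and the relation $\sum_k n_k\,\underline{D}_j\underline{D}_k\phi=-({\bf H}\nabla_\Gamma\phi)_j$ obtained by differentiating $\bn\cdot\nabla_\Gamma\phi=0$. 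I would then integrate $\int_S\underline{D}^2\phi:\underline{D}^2\psi$ by parts once in the outer index via \eqref{eqn:IBP}; since $\sum_j n_j\underline{D}_j(\cdot)=\bn\cdot\nabla_\Gamma(\cdot)=0$ the mean-curvature term again vanishes, producing the boundary integral $\int_{\partial S}\bmu_S^\intercal\nabla_\Gamma^2\phi\,\nabla_\Gamma\psi$ together with the interior term $-\int_S\sum_k\underline{D}_k\psi\,\Delta_\Gamma(\underline{D}_k\phi)$.

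The crux — and the step I expect to be the main obstacle — is to evaluate the third-order commutator $C_k:=\Delta_\Gamma(\underline{D}_k\phi)-\underline{D}_k\Delta_\Gamma\phi$ and to show it produces exactly the Gauss-curvature term. Writing $C_k=\sum_j\underline{D}_j\big(\underline{D}_j\underline{D}_k-\underline{D}_k\underline{D}_j\big)\phi+\sum_j\big(\underline{D}_j\underline{D}_k-\underline{D}_k\underline{D}_j\big)\underline{D}_j\phi$ and inserting the non-commutativity relation $\underline{D}_j\underline{D}_k\phi-\underline{D}_k\underline{D}_j\phi=n_j({\bf H}\nabla_\Gamma\phi)_k-n_k({\bf H}\nabla_\Gamma\phi)_j$ read off from \eqref{eqn:NotSymmetric}, I would carry the product rule through using $\underline{D}_j n_k=({\bf H})_{jk}$, ${\rm div}_\Gamma\bn={\rm tr}({\bf H})$, and $\sum_j n_j\underline{D}_j(\cdot)=0$. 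Every surviving normal contribution then carries a factor $n_k$ and drops upon contraction with $\underline{D}_k\psi$ (once more because $\bn\cdot\nabla_\Gamma\psi=0$), leaving
\[
\sum_k\underline{D}_k\psi\,C_k
= {\rm tr}({\bf H})\,({\bf H}\nabla_\Gamma\phi)\cdot\nabla_\Gamma\psi
- 2\,({\bf H}\nabla_\Gamma\phi)\cdot({\bf H}\nabla_\Gamma\psi).
\]
The delicate bookkeeping here is distinguishing which index is summed (inner versus outer) in each use of \eqref{eqn:IBP} and \eqref{eqn:NotSymmetric}, since the two ${\bf H}^2$ contributions and the mean-curvature term must combine in precisely the right way.

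Finally, I would invoke the Cayley--Hamilton identity ${\bf H}^2-{\rm tr}({\bf H}){\bf H}+K{\bf P}={\bf 0}$ on the tangent plane, which rewrites the displayed right-hand side as $K\,\nabla_\Gamma\phi\cdot\nabla_\Gamma\psi-({\bf H}\nabla_\Gamma\phi)\cdot({\bf H}\nabla_\Gamma\psi)$. The extra $({\bf H}\nabla_\Gamma\phi)\cdot({\bf H}\nabla_\Gamma\psi)$ is exactly what is needed to absorb the analogous term from the pointwise splitting, so that the left-hand side collapses to $-\int_S\nabla_\Gamma\psi\cdot\nabla_\Gamma\Delta_\Gamma\phi+\int_{\partial S}\bmu_S^\intercal\nabla_\Gamma^2\phi\,\nabla_\Gamma\psi$, matching the right-hand side. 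Combining the resulting identity with \eqref{eqn:LapLap} then yields \eqref{eqn:HessForm}.
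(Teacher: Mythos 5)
Your proposal is correct, but it is organized differently from the paper's proof. The paper works directly on $\int_S(\Delta_\Gamma^2\phi)\psi$: after the first integration by parts it commutes the third derivatives via $\underline{D}_i\underline{D}_j^2\phi=\underline{D}_j\underline{D}_i\underline{D}_j\phi+n_i({\bf H}\nab_\Gamma\underline{D}_j\phi)_j-n_j({\bf H}\nab_\Gamma\underline{D}_j\phi)_i$, integrates by parts again to reach $\int_S\underline{D}^2_\Gamma\phi:(\underline{D}^2_\Gamma\psi)^\intercal$ plus ${\rm tr}({\bf H}){\bf H}-{\bf H}^2$ terms, symmetrizes with \eqref{eqn:NotSymmetric}, and finishes with $\hessone_\Gamma\phi:\hessone_\Gamma\psi-\hesstwo\phi:\hesstwo\psi=\nab_\Gamma\phi^\intercal{\bf H}^2\nab_\Gamma\psi$ and ${\rm tr}({\bf H}){\bf H}-{\bf H}^2=K{\bf P}$. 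You instead subtract \eqref{eqn:LapLap} to reduce the claim to a self-contained Green's identity relating the Hessian form to the product of Laplacians, and you prove that identity by computing the commutator $[\Delta_\Gamma,\underline{D}_k]\phi$. The underlying ingredients are identical — \eqref{eqn:IBP}, the non-commutativity relation \eqref{eqn:NotSymmetric}, and the Cayley--Hamilton identity for ${\bf H}$ — but your reduction isolates all the geometric content in the single quantity $\sum_k\underline{D}_k\psi\,[\Delta_\Gamma,\underline{D}_k]\phi={\rm tr}({\bf H})({\bf H}\nab_\Gamma\phi)\cdot\nab_\Gamma\psi-2({\bf H}\nab_\Gamma\phi)\cdot({\bf H}\nab_\Gamma\psi)$, which I verified is correct, and it reuses the already-stated Lemma \ref{lem:LapLap} rather than re-deriving the $\nab_\Gamma\Delta_\Gamma\phi$ boundary term. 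What the paper's route buys is directness (no intermediate identity); what yours buys is a cleaner separation of the curvature bookkeeping and an intermediate Green's identity of some independent interest.

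One small point worth making explicit in a full write-up: integrating $\int_S\underline{D}^2_\Gamma\phi:\underline{D}^2_\Gamma\psi$ by parts in the outer index actually produces the boundary integrand $\nab_\Gamma\psi^\intercal\underline{D}^2_\Gamma\phi\,\bmu_S=\bmu_S^\intercal(\underline{D}^2_\Gamma\phi)^\intercal\nab_\Gamma\psi$, not $\bmu_S^\intercal\underline{D}^2_\Gamma\phi\,\nab_\Gamma\psi$; since $\underline{D}^2_\Gamma\phi$ is not symmetric these are not identical matrices, but by \eqref{eqn:NotSymmetric} their difference is $\bn\otimes({\bf H}\nab_\Gamma\phi)-({\bf H}\nab_\Gamma\phi)\otimes\bn$, which is annihilated when sandwiched between the tangential vectors $\bmu_S$ and $\nab_\Gamma\psi$. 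So your identification of the boundary term with $\bmu_S^\intercal\nab_\Gamma^2\phi\,\nab_\Gamma\psi$ is valid, but it deserves a line of justification.
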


Finally, we derive an integration-by-parts identity
for the biharmonic operator such that, 
when applied to the surface stream function problem \eqref{eqn:streamfunction},
the integrands over $S$ do not explicitly depend on the Gauss curvature of $\Gamma$.
\begin{lemma}\label{lem:HessForm2}
There holds for sufficiently smooth functions $\phi$ and $\psi$,
\begin{equation}\label{eqn:ThirdBiharmonicIdentity}
\begin{split}
\int_S (\Delta_\Gamma^2& \phi)\psi
 = 2\int_S \left(H_\Gamma(\phi):H_\Gamma(\psi) + K\nab_\Gamma \phi\cdot \nab_\Gamma \psi\right)\\
& -
2\int_{\p S} (\bt_S^\intercal H_\Gamma(\phi) \bmu_S) (\nab_\Gamma \psi\cdot \bmu_S)-2 \int_{\p S} (\bmu_S^\intercal \nab_\Gamma^2 \phi \bt_S) (\bt_S\cdot \nab_\Gamma \psi)
   +\int_{\p S} (\nab_\Gamma \Delta_\Gamma \phi\cdot \bmu_S)\psi,
\end{split}
\end{equation}
where $\bt_S = \bn_S\times \bmu_S$ is the tangent vector of $\p S$.
Consequently, if $\phi\in H^4(\Gamma)$ satisfies \eqref{eqn:streamfunction},
then there holds for all smooth $\psi$,
\begin{equation}
\label{eqn:StreamForm2}
\begin{split}
 &\int_S \left(H_\Gamma(\phi) : H_\Gamma(\psi)+\nab_\Gamma \phi \cdot \nab_\Gamma \psi\right) 
 -  \int_{\p S}  (\bt_S^\intercal H_\Gamma(\phi) \bmu_S)(\nab_\Gamma \psi\cdot \bmu_S)\\ 
 &\ \ + \frac12 \int_{\p S} \left((2K-2) \nab_\Gamma \phi + \nab_\Gamma \Delta_\Gamma \phi) \cdot \bmu_S\right) \psi
-\int_{\p S}
    (\bmu^T_S\nab_\Gamma^2 \phi\bt_S)(\bt_S\cdot \nab_\Gamma \psi) = -\int_S {\rm curl}_\Gamma {\bm f} \psi.
\end{split}
\end{equation}
\end{lemma}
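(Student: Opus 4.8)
The plan is to derive \eqref{eqn:ThirdBiharmonicIdentity} by bridging the two integration-by-parts formulas already in hand---the Laplacian form of Lemma~\ref{lem:LapLap} and the projected-Hessian form of Lemma~\ref{lem:HessForm}---with a single pointwise algebraic identity relating the Frobenius products of $H_\Gamma$ and $\nab_\Gamma^2$. The first step is to prove that for all sufficiently smooth $\phi,\psi$ there holds pointwise
\[
\nab_\Gamma^2\phi:\nab_\Gamma^2\psi = H_\Gamma(\phi):H_\Gamma(\psi) + \tfrac12\,\Delta_\Gamma\phi\,\Delta_\Gamma\psi.
\]
To establish this I would start from \eqref{eqn:deformGrad}, writing $H_\Gamma(\phi)=\tfrac12(\bn^\times A - A\bn^\times)$ with $A:=\nab_\Gamma^2\phi={\bf P}A{\bf P}$ symmetric and tangential by \eqref{nHessIdentity} (and likewise $B:=\nab_\Gamma^2\psi$). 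Expanding $4\,H_\Gamma(\phi):H_\Gamma(\psi)=(\bn^\times A - A\bn^\times):(\bn^\times B - B\bn^\times)$ and using cyclicity of the trace together with $(\bn^\times)^\intercal=-\bn^\times$, $(\bn^\times)^2=\bn\otimes\bn-{\bf I}=-{\bf P}$, and ${\bf P}A=A=A{\bf P}$, the diagonal terms each reduce to $A:B$ and the cross terms combine into $2\,{\rm tr}(A\bn^\times B\bn^\times)$, giving $4\,H_\Gamma(\phi):H_\Gamma(\psi)=2A:B+2\,{\rm tr}(A\bn^\times B\bn^\times)$. The remaining trace I would evaluate in a local orthonormal frame $\{e_1,e_2,\bn\}$, where $\bn^\times$ restricts to the $90^\circ$ rotation $J$ on the tangent plane and the symmetric $2\times 2$ blocks $\tilde A,\tilde B$ satisfy $J\tilde B J=\tilde B-({\rm tr}\,\tilde B)\,{\bf I}$; this yields ${\rm tr}(A\bn^\times B\bn^\times)=A:B-\Delta_\Gamma\phi\,\Delta_\Gamma\psi$, and the identity follows.

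With the pointwise identity available, the identity \eqref{eqn:ThirdBiharmonicIdentity} is a matter of bookkeeping. Since Lemmas~\ref{lem:LapLap} and \ref{lem:HessForm} both represent $\int_S(\Delta_\Gamma^2\phi)\psi$, equating them and substituting $\nab_\Gamma^2\phi:\nab_\Gamma^2\psi=H_\Gamma(\phi):H_\Gamma(\psi)+\tfrac12\Delta_\Gamma\phi\,\Delta_\Gamma\psi$ lets me solve for the Laplacian product,
\[
\int_S \Delta_\Gamma\phi\,\Delta_\Gamma\psi = 2\int_S\bigl(H_\Gamma(\phi):H_\Gamma(\psi) + K\,\nab_\Gamma\phi\cdot\nab_\Gamma\psi\bigr) + 2(\mathcal B_2 - \mathcal B_1),
\]
where $\mathcal B_1,\mathcal B_2$ denote the boundary contributions of Lemmas~\ref{lem:LapLap} and \ref{lem:HessForm} respectively. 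Reinserting this into Lemma~\ref{lem:LapLap} produces the desired volume integrand $2\int_S(H_\Gamma(\phi):H_\Gamma(\psi)+K\nab_\Gamma\phi\cdot\nab_\Gamma\psi)$ together with the residual boundary term $2\mathcal B_2-\mathcal B_1$.

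The one genuinely delicate step, and where I expect the main effort to lie, is showing that this residual boundary term equals the $H_\Gamma$-based boundary integrals in \eqref{eqn:ThirdBiharmonicIdentity}. I would verify this pointwise on $\p S$ in the orthonormal frame $\{\bmu_S,\bt_S,\bn\}$ with $\bt_S=\bn\times\bmu_S$. Writing $A_{\mu\mu}=\bmu_S^\intercal\nab_\Gamma^2\phi\,\bmu_S$, $A_{tt}=\bt_S^\intercal\nab_\Gamma^2\phi\,\bt_S$, $A_{\mu t}=\bmu_S^\intercal\nab_\Gamma^2\phi\,\bt_S$, and decomposing $\nab_\Gamma\psi=(\nab_\Gamma\psi\cdot\bmu_S)\bmu_S+(\nab_\Gamma\psi\cdot\bt_S)\bt_S$, the $\nab_\Gamma\Delta_\Gamma\phi\cdot\bmu_S$ terms cancel immediately, and it remains to match $-2\,\bmu_S^\intercal\nab_\Gamma^2\phi\,\nab_\Gamma\psi+\Delta_\Gamma\phi(\nab_\Gamma\psi\cdot\bmu_S)$ against the two $H_\Gamma$ integrands. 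The crucial computation is $\bt_S^\intercal H_\Gamma(\phi)\bmu_S=\tfrac12(A_{\mu\mu}-A_{tt})$, obtained from \eqref{eqn:deformGrad} with $\bn^\times\bmu_S=\bt_S$ and $\bt_S^\intercal\bn^\times=\bmu_S^\intercal$; combined with $\Delta_\Gamma\phi=A_{\mu\mu}+A_{tt}$, both sides reduce to $(\nab_\Gamma\psi\cdot\bmu_S)(A_{tt}-A_{\mu\mu})-2A_{\mu t}(\nab_\Gamma\psi\cdot\bt_S)$, completing \eqref{eqn:ThirdBiharmonicIdentity}.

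For the consequence \eqref{eqn:StreamForm2}, I would test the stream-function equation \eqref{eqn:streamfunction} against $\psi$ and integrate over $S$. Applying \eqref{eqn:ThirdBiharmonicIdentity} (scaled by $\tfrac12$) to $\int_S\Delta_\Gamma^2\phi\,\psi$ and integrating the low-order term by parts via the tangential divergence theorem,
\[
\int_S {\rm div}_\Gamma\bigl((K-1)\nab_\Gamma\phi\bigr)\psi = -\int_S (K-1)\,\nab_\Gamma\phi\cdot\nab_\Gamma\psi + \int_{\p S}(K-1)(\nab_\Gamma\phi\cdot\bmu_S)\psi,
\]
where the curvature boundary term from Lemma~\ref{lem:IBP} drops out because $(K-1)\nab_\Gamma\phi$ is tangential. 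Adding the two contributions, the $\pm K\,\nab_\Gamma\phi\cdot\nab_\Gamma\psi$ volume terms cancel, leaving exactly $\int_S(H_\Gamma(\phi):H_\Gamma(\psi)+\nab_\Gamma\phi\cdot\nab_\Gamma\psi)$---this cancellation is precisely why the scheme does not see the Gauss curvature in its volume integrand. Collecting the two boundary terms carrying a bare $\psi$ gives $\tfrac12\int_{\p S}\bigl((2K-2)\nab_\Gamma\phi+\nab_\Gamma\Delta_\Gamma\phi\bigr)\cdot\bmu_S\,\psi$, while the $H_\Gamma$ and $\nab_\Gamma^2$ boundary terms carry over unchanged, yielding \eqref{eqn:StreamForm2}.
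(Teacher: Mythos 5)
Your proposal is correct, but it reaches \eqref{eqn:ThirdBiharmonicIdentity} by a genuinely different route than the paper. The paper starts from the surface-calculus identity ${\bf P}\,{\rm div}_\Gamma(\Grad_\Gamma \bu^\intercal) = \nab_\Gamma({\rm div}_\Gamma \bu) + K\bu$ applied to $\bu=\bcurl_\Gamma\phi$, integrates it by parts against $\bcurl_\Gamma\psi$ to convert $\int_S K\nab_\Gamma\phi\cdot\nab_\Gamma\psi$ into $\int_S(\nab_\Gamma^2\phi\,\bn^\times):(\bn^\times\nab_\Gamma^2\psi)$ plus boundary terms, and then shuttles this back and forth through Lemma \ref{lem:HessForm} using the algebraic identity $\tfrac12\big(\nab_\Gamma^2\phi:\nab_\Gamma^2\psi-(\nab_\Gamma^2\phi\,\bn^\times):(\bn^\times\nab_\Gamma^2\psi)\big)=H_\Gamma(\phi):H_\Gamma(\psi)$; Lemma \ref{lem:LapLap} is never used. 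You instead never touch the curvature term: you equate Lemmas \ref{lem:LapLap} and \ref{lem:HessForm}, and the whole argument rests on the pointwise identity $\nab_\Gamma^2\phi:\nab_\Gamma^2\psi=H_\Gamma(\phi):H_\Gamma(\psi)+\tfrac12\Delta_\Gamma\phi\,\Delta_\Gamma\psi$, which I have checked (it is equivalent to the paper's Frobenius identity combined with ${\rm tr}(A\bn^\times B\bn^\times)=A:B-\Delta_\Gamma\phi\,\Delta_\Gamma\psi$, and your frame computation $J\tilde BJ=\tilde B-({\rm tr}\,\tilde B){\bf I}$ is the right way to see it). Your boundary bookkeeping is also correct: $2\mathcal B_2-\mathcal B_1$ leaves exactly one copy of $\int_{\p S}(\nab_\Gamma\Delta_\Gamma\phi\cdot\bmu_S)\psi$, and the remaining comparison reduces, via $\bt_S^\intercal H_\Gamma(\phi)\bmu_S=\tfrac12(A_{\mu\mu}-A_{tt})$ and $\Delta_\Gamma\phi=A_{\mu\mu}+A_{tt}$, to the same expression $(A_{tt}-A_{\mu\mu})(\nab_\Gamma\psi\cdot\bmu_S)-2A_{\mu t}(\bt_S\cdot\nab_\Gamma\psi)$ on both sides. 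What each approach buys: yours is more elementary and self-contained (pure linear algebra plus the two prior lemmas, no appeal to the vector identity from \cite{JankuhnEtal18} and no extra integration by parts), and the pointwise identity is a useful standalone fact relating the two Hessian notions; the paper's route produces the intermediate formula \eqref{eqn:GaussGone} expressing $\int_S K\nab_\Gamma\phi\cdot\nab_\Gamma\psi$ purely through second derivatives, which is conceptually closer to the ``Gauss curvature disappears'' message, but is not otherwise needed. The derivation of \eqref{eqn:StreamForm2} from \eqref{eqn:ThirdBiharmonicIdentity} is the same in both.
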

\begin{proof}
We begin with the following pointwise identity,
which holds for all sufficiently smooth tangential vector fields
$\bu$ \cite{JankuhnEtal18}:
\[
{\bf P} {\rm div}_{\Gamma}(\Grad_{\Gamma} \bu^\intercal ) = \nab_{\Gamma}({\rm div}_\Gamma \bu)+K \bu.
\]
Setting $\bu = \bcurl_{\Gamma} \phi$,
so that ${\rm div}_{\Gamma} \bu = 0$
and $\Grad_{\Gamma} \bu^\intercal = -\nab_\Gamma^2 \phi \bn^\times$ (cf.~\eqref{eqn:streamGrad}) yields
\[
-P {\rm div}_{\Gamma}(\nab_\Gamma^2 \phi \bn^\times) = K \bcurl_{\Gamma} \phi.
\]
We then take the dot product of this expression 
with $\bcurl_\Gamma \psi$, integrate over $S$, and integrate by parts to obtain
\begin{align*}
    \int_S K \nab_\Gamma \phi\cdot \nab_\Gamma \psi
    & =  \int_S K \bcurl_{\Gamma} \phi\cdot \bcurl_\Gamma \psi\\
    & = \int_S (\nab_\Gamma^2 \phi\bn^\times):(\Grad_\Gamma \bcurl_\Gamma \psi) -\int_{\p S}
    (\nab_\Gamma^2 \phi \bn^\times \bmu_S) \cdot \bcurl_\Gamma \psi\\
& =     \int_S (\nab_\Gamma^2 \phi\bn^\times):(\bn^\times \nab^2_\Gamma \psi) -\int_{\p S}
    (\nab_\Gamma^2 \phi\bt_S) \cdot \bcurl_\Gamma \psi.
\end{align*}
%
We  then write, on $\p S$,
\begin{align*}
\bcurl_\Gamma \psi = \bn\times \nab_{\Gamma} \psi
%
& = (\bmu_S\cdot \nab_\Gamma \psi) \bt_S - (\bt_S \cdot \nab_\Gamma \psi)\bmu_S,
\end{align*}
to arrive at
\begin{equation}
\label{eqn:GaussGone}
\begin{split}
    \int_S K \nab_\Gamma \phi\cdot \nab_\Gamma \psi
& =     \int_S (\nab_\Gamma^2 \phi\bn^\times):(\bn^\times \nab^2_\Gamma \psi)\\
&\qquad -\int_{\p S}
    (\bt^\intercal_S\nab_\Gamma^2 \phi\bt_S)(\bmu_S\cdot \nab_\Gamma \psi)
    +\int_{\p S}
    (\bmu_S^\intercal \nab_\Gamma^2 \phi\bt_S)(\bt_S\cdot \nab_\Gamma \psi).
\end{split}
\end{equation}
Inserting this identity into 
\eqref{eqn:HessForm} and applying the
algebraic identities 
\begin{align*}
\frac12 \left(\nab^2_\Gamma \phi:\nab^2_\Gamma \psi -
(\nab_\Gamma^2 \phi \bn^\times):(\bn^\times \nab_\Gamma^2 \psi)\right) &= H_\Gamma(\phi):H_\Gamma(\psi),\\
\frac12 \left(\bmu^\intercal_S \nab_\Gamma^2 \phi \bmu_S - \bt^\intercal_S \nab_\Gamma^2 \phi \bt_S\right) &= \bt_S^\intercal H_{\Gamma}(\phi)\bmu_S,
\end{align*}
then yield
\begin{equation*}
\begin{split}
\int_S (\Delta_\Gamma^2 \phi) \psi 
&  = \int_S \left(\nab^2_\Gamma \phi:\nab^2_\Gamma \psi +
(\nab_\Gamma^2 \phi \bn^\times):(\bn^\times \nab_\Gamma^2 \psi)\right)\\
&\qquad-\int_{\p S} (\bmu^\intercal_S \nab_\Gamma^2 \phi \bmu_S+\bt_S^\intercal \nab_\Gamma^2 \phi \bt_S) (\nab_\Gamma \psi\cdot \bmu_S)
   +\int_{\p S} (\nab_\Gamma \Delta_\Gamma \phi\cdot \bmu_S)\psi\\
& =2 \int_S \left( H_\Gamma(\phi):H_\Gamma(\psi) + (\nab_\Gamma^2 \phi \bn^\times):(\bn^\times \nab_\Gamma^2 \psi)\right)\\
&\qquad-\int_{\p S} (\bmu^\intercal_S \nab_\Gamma^2 \phi \bmu_S+\bt_S^\intercal \nab_\Gamma^2 \phi \bt_S) (\nab_\Gamma \psi\cdot \bmu_S)
   +\int_{\p S} (\nab_\Gamma \Delta_\Gamma \phi\cdot \bmu_S)\psi\\
%
%
& = 
2\int_S \left(H_\Gamma(\phi):H_\Gamma(\psi) + K\nab_\Gamma \phi\cdot \nab_\Gamma \psi\right)\\
&\qquad-
2\int_{\p S} (\bt^\intercal_S H_\Gamma(\phi) \bmu_S) (\nab_\Gamma \psi\cdot \bmu_S)-2 \int_{\p S} (\bmu^\intercal_S \nab_\Gamma^2 \phi \bt_S) (\bt_S\cdot \nab_\Gamma \psi)
   +\int_{\p S} (\nab_\Gamma \Delta_\Gamma \phi\cdot \bmu_S)\psi.
 \end{split}
 \end{equation*}
Thus, \eqref{eqn:ThirdBiharmonicIdentity} holds. 
The identity \eqref{eqn:StreamForm2} follows from \eqref{eqn:ThirdBiharmonicIdentity}
and a simple application of the divergence theorem.

\end{proof}

\section{Approximate Geometries and Meshes}\label{sec-Mesh}
Let $\bar \Gamma_h$ be a polyhedral approximation of $\Gamma$ with triangular
faces, and assume  $\bar \Gamma_h\subset U_\delta$ and 
$d(x) = O(h^2)$ for all $x\in \bar \Gamma_h$. 
The set of faces of $\bar \Gamma_h$
is denoted by $\bar \calT_h$, which we assume to be shape-regular. 
For simplicity in presentation,
we assume $\bar \calT_h$ is quasi-uniform and 
that the vertices of each $\bar T\in \bar \calT_h$ lie on $\Gamma$, i.e., 
$\bar \Gamma_h$ is the continuous, piecewise linear interpolant of $\Gamma$.
We  set $h_{\bar T} = {\rm diam}(\bar T)$ for all $\bar T\in \bar \calT_h$,
denote by $\bar \calE_h$ the set of edges in $\bar \calT_h$,
and set $h_{\bar e} = {\rm diam}(\bar e)$. Note the quasi-uniform
assumption implies $h_{\bar e} \approx h_{\bar T} \approx h$ for all $\bar T\in \bar \calT_h$
and $\bar e\in \bar \calE_h$.

For $k_g\in \mathbb{N}$ and $\bar T\in \bar \calT_h$, let $\{\bar{x}_j\}_{j=1}^{N_{k_g}} \subset {\rm cl}(\bar T)$ (with $N_{k_g} = \binom{k_g+2}{2}$)
be the standard Lagrange nodal points of $\bar T$, and 
let $\{\bar \phi_i\}_{i=1}^{N_{k_g}} \subset \mathbb{P}_{k_g}(\bar T)$ be 
the associated nodal basis functions, i.e., $\bar \phi_i(\bar x_j) = \delta_{i,j}$.
We then define the approximate closest-point projection
\[
\bp_{k_g} |_{\bar T}(\bar x) = \sum_{i=1}^{N_{k_g}} \bp(\bar x_i) \bar \phi_i(\bar x)\qquad \bar x\in {\rm cl}(\bar T),
\]
so that $\bp_{k_g}|_{\bar T}(\bar x_i) = \bp(\bar x_i)$ for $i=1,\ldots,N_{k_g}$, 
i.e., $\bp_{k_g}|_{\bar T}$ is the $k_g$-degree Lagrange interpolant of $\Gamma$.
The high-order mesh and associated surface are then defined as
\[
\calT_{h,k_g} = \{\bp_{k_g}(\bar T)\ \forall \bar T\in \bar \calT_h\},\qquad \Gamma_{h,k_g} = {\rm int}\left(\bigcup_{T\in \calT_{h,k_g}} {\rm cl}(T)\right),
\]
whose set of edges are given by
\[
\calE_{h,k_g} = \{\bp_{k_g}(\bar e)\ \forall \bar e\in \bar \calE_h\}.
\]
We set $h_T = h_{\bar T}$ and $h_e = h_{\bar e}$, where $T = \bp_{k_g}(\bar T)$ and $e = \bp_{k_g}(\bar e)$.

To simplify the presentation, we will drop the subscript $k_g$ and simply write $\calT_h$, $\Gamma_h$, and $\calE_h$
for  $\calT_{h,k_g}$, $\Gamma_{h,k_g}$, and $\calE_{h,k_g}$, respectively. 
We let $\bn_T$ denote the outward unit normal
of $T\in \calT_h$, let $\bmu_T$ be the outward unit
co-normal of $\p T$, and set $\bt_T = \bn_T\times \bmu_T$. We also let $\bn_h$ be the outward unit
normal of $\Gamma_h$, so that $\bn_h|_T = \bn_T$ for all $T\in \calT_h$.  Likewise, we
let $\bmu_h$ and $\bt_h$ be defined such that $\bmu_h|_{\p T} = \bmu_T$ and $\bt_h|_{\p T} = \bt_T$ for all $T\in \calT_h$.
By properties
of the Lagrange interpolant, we have (cf.~\cite{Demlow09})
\begin{equation}
    \label{eqn:GeoBounds}
    \|d\|_{L^\infty(\Gamma_h)}\lesssim h^{k_g+1},\qquad \|\bn^e-\bn_h\|_{L^\infty(\Gamma_h)}\lesssim h^{k_g},
\end{equation}
where we use the notation $A\lesssim B$ 
to mean $A\le c B$  for some constant
$c>0$ independent of $h.$ We also use $A \approx B$ to mean $A\lesssim B$ and $B\lesssim A$.

We also map these mesh objects onto the exact surface $\Gamma$ via the closest-point projection.
Define
\begin{alignat*}{2}
%
\calT_h^\ell &= \{T^\ell:=\bp(T):\ T\in \calT_h\},\qquad &&\calE_h^\ell = \{e^\ell:=\bp(e):\ e\in \calE_h\}.
\end{alignat*}
We let $\bn_{T^\ell}$ denote
the outward unit normal of $T^\ell\in \calT_h^\ell$ (so that $\bn|_{T^\ell} = \bn_{T^\ell}$),
 let $\bmu_{T^\ell}$ denote the outward unit co-normal of $\p T^\ell$,
 and set $\bt_{T^\ell} = \bn_{T^\ell}\times \bmu_{T^\ell}$. 
 Let $\bmu$ and $\bt$ be defined such that $\bmu|_{\p T^\ell} = \bmu_{T^\ell}$ and $\bt|_{\p T^\ell} = \bt_{T^\ell}$. 
In the rest of the paper, we will drop the superscript $e$ and simply
write $\bn_{T^\ell}$, $\bn$,  $\bmu_{T^\ell}$, etc. for their respective extensions.

We use the notation 
\begin{align*}
(\psi,\chi)_{\mathcal{S}_h} = \sum_{S\in \mathcal{S}_h} \int_S \psi \circ \chi,\quad \text{and}\quad \|\psi\|_{L^2(\mathcal{S}_h)} = \sqrt{(\psi,\psi)_{\mathcal{S}_h}},
\end{align*}
where $\mathcal{S}_h$ is either a set of faces or edges,
and $\circ$ is either a product, dot product, or Frobenius product depending on whether $\psi$ and $\chi$ are scalar,
vector, or matrix-valued functions.

For $e\in \calE_h$ with $e=\p T_+\cap \p T_-$ ($T_{\pm}\in \calT_h$), we define the average of a piecewise smooth scalar, vector, or matrix-valued
function $w$ across $e$ as
\[
\avg{w}|_e = \frac12 (w_++w_-),
\]
where $w_{\pm} = w|_{T_{\pm}}$.
For a piecewise smooth vector-valued function $\bv$, we define its jump across
$e$ as
\[
\jump{\bv}|_e = \bv_+\cdot \bmu_+ + \bv_- \cdot \bmu_-, 
\]
where $\bmu_{\pm} = \bmu_{T_{\pm}}$ is the outward unit co-normal of $\p T_{\pm}$ restricted to $e$.
Analogous definitions of averages and jumps are extended to the mesh $\calT_h^\ell$.

\subsection{Mappings between $\Gamma$, $\Gamma_h$, and $\bar \Gamma_h$}

For a scalar function $\psi$ defined on the exact surface $\Gamma$,
we recall that its extension $\psi^e:U_\delta\to \bbR$ is given by
$\psi^e= \psi \circ \bp$.
For a scalar function $\psi$ defined on 
the discrete surface $\Gamma_h$, 
we let $\tilde \psi:\Gamma \to \bbR$ 
be defined by
\[
\tilde \psi = \psi\circ x_h,
\]
where $x_h(x)$ (with $x\in \Gamma$) satisfies $\bp(x_h) = x$, i.e., $x_h = \bp|_{\Gamma_h}^{-1}$.
We then set the lift of $\psi:\Gamma_h\to \bbR$ 
as
\[
\psi^\ell = \tilde \psi \circ \bp\qquad \text{on }U_\delta.
\]

Next for $x\in \Gamma_h$, let $\mu_h(x)$ satisfy $\mu_h(x)d\sigma_h(x) = d\sigma(\bp(x))$,
where $d\sigma$ and $d\sigma_h$ are the surface measures on $\Gamma$ and $\Gamma_h$ respectively.
In particular, we have
\begin{align*}
\int_{\Gamma_h} \psi^e \mu_h 
= \int_\Gamma \psi\qquad \forall \psi\in L^1(\Gamma),
\end{align*}
and so
\begin{align*}
\int_{\Gamma_h} \psi = \int_\Gamma (\mu_h^{-1} \psi)^\ell
\qquad \forall \psi\in L^1(\Gamma_h).
\end{align*}
Likewise, for $x\in e\in \calE_h$, let $\mu_e$ satisfy
$\mu_e(x) ds_h(x) = ds(\bp(x))$, where $ds$ and $ds_h$ 
are the measures on $e^\ell = \bp(e)$ and $e$, respectively.
These functions satisfy \cite{Demlow09,CockburnDemlow16}
\begin{align*}
\mu_h(x) &= \bn(x)\cdot \bn_h(x) \prod_{i=1}^2 (1-d(x)\kappa_i(x))\qquad x\in \Gamma_h,\\
\mu_e&= |\Grad \bp \bt_e| = \big| ({\bf P}-d {\bf H})\bt_e\big|,
\end{align*}
where $\{\kappa_i\}$ are the principal curvatures and 
$\bt_e = \bt_h|_e$.
Recalling \eqref{eqn:GeoBounds}, we have
\begin{equation}
1-\bn\cdot \bn_h = \frac12(\bn\cdot \bn - 2\bn\cdot \bn_h+ \bn_h\cdot \bn_h) = \frac12 |\bn-\bn_h|^2 = \mathcal{O}(h^{2k_g}), \label{eqn:one-nnh}
\end{equation}
and so 
\begin{equation}\label{eqn:muCloseToOne}
|1-\mu_h|= \mathcal{O}(h^{k_g+1}).
\end{equation}
Likewise we have
\begin{align}\label{eqn:muECloseToOne}
    |1-\mu_e|= \mathcal{O}(h^{k_g+1}).
\end{align}
From these estimates and the chain rule (cf.~Lemma \ref{lem:ChainRuleFun})
we have for all $\psi\in H^1(T)$ (cf.~\cite[(2.15)--(2.17)]{Demlow09})
\begin{subequations}
\label{eqn:NormEquivy}
\begin{equation}
\begin{aligned}
\|\psi\|_{L^2(T)}&\approx \|\psi^\ell \|_{L_2(T^\ell)},\qquad
&& \|\nab_{\Gamma_h} \psi\|_{L^2(T)}\approx \|\nab_\Gamma \psi^\ell \|_{L_2(T^\ell)},\\
\|\psi\|_{L^2(\p T)}&\approx \|\psi^\ell \|_{L_2(\p T^\ell)}.
\end{aligned}
\end{equation}
Likewise, for $\psi\in H^m(T)$ with $m\ge 1$, there holds
\begin{align}
|\psi|_{H^m(T)}\lesssim \sum_{j=1}^m |\psi^\ell|_{H^j(T^\ell)},\quad \text{and}\quad
|\psi^\ell|_{H^m(T^\ell)}\lesssim \sum_{j=1}^m |\psi|_{H^j(T)}.
\end{align}
\end{subequations}
By properties of the Lagrange interpolant,
we also have (cf.~\cite[(2.18)--(2.20)]{Demlow09})
\begin{equation}
\label{eqn:NormEquivyLag}
\begin{aligned}
\|\bar \psi\|_{L^2(\bar T)}&\approx \|\psi\|_{L^2(T)},\qquad && \|\nab_{\bar \Gamma_h} \bar \psi\|_{L^2(\bar T)}\approx \|\nab_{\Gamma_h} \psi \|_{L_2(T)},\\
\|\bar \psi\|_{L^2(\p \bar T)}&\approx \|\psi \|_{L_2(\p T)},
\end{aligned}
\end{equation}
where $T\in \calT_h$ and $\bar T\in \bar \calT_h$ satisfy $T = \bp_{k_g}(\bar T)$ and $\bar \psi = \psi \circ \bp_{k_g}$.


\begin{lemma}\label{lem:GeomEstimates1}
The following estimates hold on each $T\in \calT_h$:
\begin{align}
\left\|{\bf P}_{h}\bn\right\|_{L^{\infty}\left(T\right)} & \lesssim h^{k_{g}}, \label{GeoApprox.1} \\
\left\|{\bf P}\bn_{h}\right\|_{L^{\infty}\left(T\right)} & \lesssim h^{k_{g}}, \label{GeoApprox.2} \\
\left\|\bmu - {\bf P}\bmu_h\right\|_{L^{\infty}\left(\p T\right)} & \lesssim h^{k_{g} + 1}, \label{GeoApprox.3} \\
\left\|\bmu - \bmu_h\right\|_{L^{\infty}(\p T)} & \lesssim h^{k_g},  \label{GeoApprox.4} \\
\left\|\bt - \bt_h\right\|_{L^{\infty}(\p T)} & \lesssim h^{k_g}. \label{GeoApprox.5}
\end{align}
\end{lemma}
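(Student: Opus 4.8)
The plan is to reduce all five estimates to the basic bounds $\|\bn-\bn_h\|_{L^\infty(\Gamma_h)}\lesssim h^{k_g}$ and $\|d\|_{L^\infty(\Gamma_h)}\lesssim h^{k_g+1}$ from \eqref{eqn:GeoBounds}, together with the measure estimate \eqref{eqn:muECloseToOne}. For \eqref{GeoApprox.1} and \eqref{GeoApprox.2} I would exploit that each projection annihilates its own normal: since ${\bf P}_h={\bf I}-\bn_h\otimes\bn_h$ satisfies ${\bf P}_h\bn_h=0$, and likewise ${\bf P}\bn=0$, I write ${\bf P}_h\bn={\bf P}_h(\bn-\bn_h)$ and ${\bf P}\bn_h={\bf P}(\bn_h-\bn)$. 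Using that orthogonal projections are nonexpansive, $\|{\bf P}_h\|_2=\|{\bf P}\|_2=1$, both quantities are bounded by $\|\bn-\bn_h\|\lesssim h^{k_g}$, and the two estimates follow immediately.

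For \eqref{GeoApprox.5} I would start from the pushforward description of the lifted tangent: parametrizing $e$ by arclength and composing with $\bp$ shows that the unit tangent of $e^\ell$ is $\bt=\Grad\bp\,\bt_e/|\Grad\bp\,\bt_e|=({\bf P}-d{\bf H})\bt_e/\mu_e$, with $\bt_e=\bt_h|_e$. Writing ${\bf P}\bt_h=\bt_h-(\bn\cdot\bt_h)\bn$ and collecting terms yields the key decomposition
\[
\bt-\bt_h=\tfrac{1}{\mu_e}\big[(1-\mu_e)\bt_h-(\bn\cdot\bt_h)\bn-d{\bf H}\bt_h\big].
\]
Since $\bn_h\cdot\bt_h=0$, one has $\bn\cdot\bt_h=(\bn-\bn_h)\cdot\bt_h=\mathcal{O}(h^{k_g})$, while $|1-\mu_e|,\|d\|=\mathcal{O}(h^{k_g+1})$ and $\mu_e\gtrsim 1$; hence $\|\bt-\bt_h\|\lesssim h^{k_g}$. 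Estimate \eqref{GeoApprox.4} then follows with no further geometry: using $\bmu=\bt\times\bn$, $\bmu_h=\bt_h\times\bn_h$ and the identity $\bmu-\bmu_h=(\bt-\bt_h)\times\bn+\bt_h\times(\bn-\bn_h)$, I bound $\|\bmu-\bmu_h\|\lesssim\|\bt-\bt_h\|+\|\bn-\bn_h\|\lesssim h^{k_g}$.

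The delicate point, and the step I expect to be the main obstacle, is the sharper estimate \eqref{GeoApprox.3}: although $\|\bmu-\bmu_h\|$ is only $\mathcal{O}(h^{k_g})$, the projection ${\bf P}$ must gain a full power of $h$. The mechanism is that the $\mathcal{O}(h^{k_g})$ parts of $\bt-\bt_h$ and $\bmu-\bmu_h$ are purely normal to $\Gamma$, which ${\bf P}$ removes. To make this precise I would use that $\{\bt,\bmu\}$ is an orthonormal basis of the tangent plane of $\Gamma$, so that ${\bf P}\bmu_h=(\bmu_h\cdot\bt)\bt+(\bmu_h\cdot\bmu)\bmu$ and
\[
\|\bmu-{\bf P}\bmu_h\|^2=(\bmu_h\cdot\bt)^2+(1-\bmu_h\cdot\bmu)^2.
\]
For the second term, $1-\bmu_h\cdot\bmu=\tfrac12|\bmu-\bmu_h|^2\lesssim h^{2k_g}$ by \eqref{GeoApprox.4}. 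For the first term I would dot the decomposition of $\bt-\bt_h$ with $\bmu_h$: the $\bt_h$-component drops since $\bmu_h\cdot\bt_h=0$, leaving $\bmu_h\cdot\bt=\tfrac{1}{\mu_e}[-(\bn\cdot\bt_h)(\bmu_h\cdot\bn)-d(\bmu_h\cdot{\bf H}\bt_h)]$, where $\bn\cdot\bt_h$ and $\bmu_h\cdot\bn=(\bn-\bn_h)\cdot\bmu_h$ are \emph{each} $\mathcal{O}(h^{k_g})$, so their product is $\mathcal{O}(h^{2k_g})$, while the remaining term is $\mathcal{O}(h^{k_g+1})$. Since $k_g\ge 1$, both contributions are $\lesssim h^{2(k_g+1)}$, which gives $\|\bmu-{\bf P}\bmu_h\|\lesssim h^{k_g+1}$ and completes the proof.
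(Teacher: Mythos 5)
Your argument is correct, and it is considerably more detailed than what the paper provides: the paper's proof of this lemma consists of the remark that \eqref{GeoApprox.1}--\eqref{GeoApprox.2} and \eqref{GeoApprox.4}--\eqref{GeoApprox.5} follow from \eqref{eqn:GeoBounds}, together with a citation to Olshanskii--Reusken for the sharper bound \eqref{GeoApprox.3}. Your one-line treatment of \eqref{GeoApprox.1}--\eqref{GeoApprox.2} via ${\bf P}_h\bn={\bf P}_h(\bn-\bn_h)$ and nonexpansiveness of orthogonal projections is exactly the intended argument. Your route to \eqref{GeoApprox.5} through the pushforward identity $\bt=({\bf P}-d{\bf H})\bt_h/\mu_e$ is consistent with the formula $\mu_e=|({\bf P}-d{\bf H})\bt_e|$ that the paper records, and the resulting decomposition correctly isolates the purely normal term $(\bn\cdot\bt_h)\bn=((\bn-\bn_h)\cdot\bt_h)\,\bn$ as the only $O(h^{k_g})$ contribution; \eqref{GeoApprox.4} then follows from the cross-product identity as you state. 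For \eqref{GeoApprox.3} your mechanism --- dotting the decomposition with $\bmu_h$ so that the $\bt_h$-component vanishes and the surviving term $(\bn\cdot\bt_h)(\bmu_h\cdot\bn)$ is a product of two $O(h^{k_g})$ factors --- is the same superconvergence effect exploited in the cited reference, so you have in effect reproduced that proof rather than outsourcing it. One bookkeeping slip: the sentence ``both contributions are $\lesssim h^{2(k_g+1)}$'' is accurate only if read as referring to the two summands of $\left|\bmu-{\bf P}\bmu_h\right|^2$; applied to the two terms of $\bmu_h\cdot\bt$ it should read $\lesssim h^{k_g+1}$ (using $h^{2k_g}\le h^{k_g+1}$ for $k_g\ge 1$). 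This does not affect the validity of the conclusion.
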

\begin{proof}
The estimates \eqref{GeoApprox.1}--\eqref{GeoApprox.2} and 
\eqref{GeoApprox.4}--\eqref{GeoApprox.5} follow from \eqref{eqn:GeoBounds}.
The proof of \eqref{GeoApprox.3} is found in \cite{OlshanskiiReusken14}.

\end{proof}

\section{The $C^0$ Interior Penalty Method}\label{sec-Method}

For an integer $k\ge 2$, we define  families of finite element spaces 
defined on the affine mesh, the polynomial-mapped mesh,
and the surface mesh:
\begin{align*} 
\bar V_h &= \{\bar \psi\in C^0(\bar \Gamma_h):\ \bar \psi|_{\bar T}\in \mathbb{P}_k(\bar T)\ \forall \bar T\in \bar\calT_h\},\qquad
 V_h = \{\psi\in C^0(\Gamma_h):\  \psi\circ \bp_{k_g} \in \bar V_h\},\\
 V_h^\ell &= \{\psi\in C^0(\Gamma):\ \psi \circ \bp \in V_h\},
\end{align*}
where $\mathbb{P}_k(\bar T)$ denotes the space of polynomials of degree $\le k$ on $\bar T$.
We further set $V_{h,0} = V_h\cap L^2_0(\Gamma_h)$,
where $L^2_0(\Gamma_h)$ is the space of square integrable 
functions with vanishing mean.
Likewise, we define the spaces of piecewise smooth functions
\begin{align*} 
\bar W &= \{\bar w\in C^0(\bar \Gamma_h):\ \bar w|_{\bar T}\in H^3(\bar T)\ \forall \bar T\in \bar \calT_h\},\qquad
 W = \{w\in C^0(\Gamma_h):\ w|_{T}\in H^3(T)\ \forall T\in \calT_h\},\\
 W^\ell & = \{w\in C^0(\Gamma):\ w|_{T^\ell} \in H^3(T^\ell)\ \forall T^\ell\in \calT_h^\ell\},
\end{align*}
and note the obvious inclusions $\bar V_h\subset \bar W$, $V_h\subset W$, and $V_h^\ell \subset W^\ell$.
Similar to above, we set $W_0 = W\cap L^2_0(\Gamma_h)$.

To derive the $C^0$ IP method, we assume for the moment
that $\phi$ is a smooth solution to \eqref{eqn:streamfunction}. 
We use the identity \eqref{eqn:StreamForm2} with $S = T^\ell \in \calT_h^\ell$
and sum over elements to obtain, for all $\psi \in V_h^\ell$,
\begin{align*}
- \int_\Gamma ({\rm curl}_{\Gamma} {\bm f}) \psi
= \sum_{T^\ell \in \calT^\ell_h} \left(\int_{T^\ell} H_{\Gamma}(\phi) : H_{\Gamma}(\psi) +\int_{T^\ell} \nab_{\Gamma} \phi \cdot \nab_{\Gamma} \psi - \int_{\p T^\ell} (\bt_{T^\ell}^\intercal H_\Gamma(\phi) \bmu_{T^\ell} )(\nab_\Gamma \psi \cdot \bmu_{T^\ell})\right),
\end{align*} 
where we used the continuity of $\psi$.
Using the temporary smoothness assumption of $\phi$, we have
\[
\sum_{T^\ell \in \calT_h^\ell} \int_{\p T^\ell} (\bt_{T^\ell}^\intercal H_\Gamma(\phi) \bmu_{T^\ell} )(\nab_\Gamma \psi \cdot \bmu_{T^\ell}) = \sum_{e^\ell \in \calE_h^\ell} \int_{e^\ell} \avg{\bt^\intercal H_{\Gamma}(\phi) \bmu} \jump{\nab_{\Gamma} \psi}.
\]
We then add standard symmetry and stabilization terms to obtain the identity
\begin{align*}
\ell(\psi):=- \int_\Gamma ({\rm curl}_{\Gamma} {\bm f}) \psi
& = 
\sum_{T^\ell \in \calT^\ell_h} \int_{T^\ell} H_{\Gamma}(\phi) : H_{\Gamma}(\psi) +\int_{\Gamma} \nab_{\Gamma} \phi \cdot \nab_\Gamma \psi
-\sum_{e^\ell \in \calE_h^\ell} \int_{e^\ell} \avg{\bt^\intercal H_{\Gamma}(\phi) \bmu} \jump{\nab_{\Gamma} \psi}\\
&\qquad -\sum_{e^\ell \in \calE_h^\ell} \int_{e^\ell} \avg{\bt^\intercal H_{\Gamma}(\psi) \bmu} \jump{\nab_{\Gamma} \phi}
+\sigma \sum_{e^\ell \in \calE_h^\ell} {h^{-1}_{e^\ell}} \int_{e^\ell} \jump{\nab_{\Gamma} \phi} \jump{\nab_{\Gamma} \psi}=:a_h^\ell(\phi,\psi),
\end{align*}
where $\sigma>0$ is a penalty parameter.

This calculation motivates the method: Find $\phi_h\in V_{h,0}$
satisfying
\begin{align} \label{eqn:C0IPMethod}
a_h(\phi_h,\psi) =  \ell_h(\psi):=\int_{\Gamma_h}  {\bm f}_h \cdot \bcurl_{\Gamma_h} \psi\qquad \forall \psi\in V_{h,0},
\end{align}
where ${\bm f}_h$ is an approximation of ${\bm f}$, defined on $\Gamma_h$, and 
\begin{align*}
a_h(\phi_h,\psi)
:&=
\sum_{T \in \calT_h} \int_{T} H_{\Gamma_h}(\phi_h):H_{\Gamma_h}(\psi) +\int_{\Gamma_h} \nab_{\Gamma_h} \phi_h \cdot \nab_{\Gamma_h} \psi
-\sum_{e \in \calE_h} \int_{e} \avg{\bt^\intercal_h H_{\Gamma_h}(\phi_h) \bmu_h} \jump{\nab_{\Gamma_h} \psi}\\
&\qquad -\sum_{e \in \calE_h} \int_{e} \avg{\bt^\intercal_h H_{\Gamma_h}(\psi) \bmu_h} \jump{\nab_{\Gamma_h} \phi_h}
+\sigma \sum_{e \in \calE_h} {h_{e}^{-1}} \int_{e} \jump{\nab_{\Gamma_h} \phi_h} \jump{\nab_{\Gamma_h} \psi}.
\end{align*}
Without loss of generality, we assume ${\bm f}_h\cdot \bn_h = 0$.

\subsection{Stability and Continuity estimates}
To start the analysis of the $C^0$ IP method \eqref{eqn:C0IPMethod}
we define the following three norms on $W_0$:
\begin{align*}
 \|\psi\|_{2,h}^2 &= \sum_{T\in \calT_h} \|H_{\Gamma_h}(\psi)\|_{L^2(T)}^2 + \|\nab_{\Gamma_h} \psi\|_{L^2(\Gamma_h)}^2+\sum_{e\in \calE_h} h_e^{-1} \|[\nab_{\Gamma_h} \psi]\|_{L^2(e)}^2,\\
 \tbar{\psi}^2_{2,h}
 & = \|\psi\|_{2,h}^2 + \sum_{e\in \calE_h} h_e \|\avg{H_{\Gamma_h}(\psi)}\|_{L^2(e)}^2,\\
 \tbar{\psi}^2_{H_h} 
 & = \tbar{\psi}^2_{2,h}+ \sum_{T\in \calT_h} |\psi|_{H^2(T)}^2 + \sum_{T\in \calT_h} h_T^2 |\psi|_{H^3(T)}^2.
\end{align*}
Analogous norms are also defined on the exact surface $\Gamma$:
\begin{align*}
 \|\psi\|_{2}^2 &= \sum_{T^\ell\in \calT^\ell_h} \|H_{\Gamma}(\psi)\|_{L^2(T^\ell)}^2 + \|\nab_{\Gamma} \psi\|_{L^2(\Gamma)}^2+\sum_{e^\ell \in \calE_h^\ell} h_{e^\ell}^{-1} \|[\nab_{\Gamma} \psi]\|_{L^2(e^\ell)}^2,\\
 \tbar{\psi}^2_{2}
 & = \|\psi\|_{2}^2 + \sum_{e^\ell\in \calE^\ell_h} h_{e^\ell} \|\avg{H_{\Gamma}(\psi)}\|_{L^2(e^\ell)}^2,\\
 \tbar{\psi}^2_{H} 
 & = \tbar{\psi}^2_{2}+ \sum_{T^\ell\in \calT^\ell_h} |\psi|_{H^2(T^\ell)}^2 +\sum_{T^\ell\in \calT^\ell_h} h_{T^\ell}^2 |\psi|_{H^3(T^\ell)}^2.
\end{align*}

The following proposition states some standard trace
and inverse estimates.
\begin{proposition}\label{prop:Trace}
Let $T\in \calT_h$ and $T^\ell\in \calT_h^\ell$.
Then there holds
\begin{equation}
\label{tracineq}
\begin{aligned}
\|\psi\|_{L^2(\p T)}^2
&\lesssim h_T^{-1} \|\psi\|^2_{L^2(T)}+h_T \|\nab_{\Gamma_h} \psi\|_{L^2(T)}^2\qquad &&\forall \psi\in H^1(T),\\
\|\psi\|_{L^2(\p T^\ell)}^2
&\lesssim h_T^{-1} \|\psi\|^2_{L^2(T^\ell)}+h_T \|\nab_{\Gamma} \psi\|_{L^2(T^\ell)}^2\qquad &&\forall \psi\in H^1(T^\ell).
\end{aligned}
\end{equation}
Moreover, for any $\psi\circ \bp_{k_g} \in \mathbb{P}_k(\bar T)$ with 
$\bar T\in \bar \calT_h$ and $k\in \mathbb{N}$, there holds (with $T = \bp_{k_g}(\bar T))$
the inverse estimates ($m\in \mathbb{N}$)
\begin{equation}
\label{inveineq}
\begin{split}
\|\psi\|_{H^m(T)}&\lesssim h_T^{-m} \|\psi\|_{L^2(T)},\qquad
\|\psi^\ell\|_{H^m(T^\ell)}\lesssim h_T^{-m} \|\psi^\ell\|_{L^2(T^\ell)}.
\end{split}
\end{equation}
\end{proposition}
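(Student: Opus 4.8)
The plan is to reduce both families of estimates to their well-known Euclidean analogues on the flat triangle $\bar T$ and then transport the result to the curved elements $T$ and $T^\ell$ using the norm equivalences \eqref{eqn:NormEquivy} and \eqref{eqn:NormEquivyLag} together with the chain rule (Lemma \ref{lem:ChainRuleFun}). Since $T = \bp_{k_g}(\bar T)$ and $T^\ell = \bp(T)$ are smooth polynomial images of a shape-regular flat triangle of diameter $\approx h_T$, all of the geometric distortion is encoded in the Jacobians of $\bp_{k_g}$ and $\bp$, whose norms and inverses are uniformly bounded at the appropriate $h$-scales; this is what makes the transfer constants $h$-independent.

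For the trace inequalities I would first record the standard scaled trace inequality on the affine triangle: mapping $\bar T$ to a reference triangle $\hat T$ of unit diameter, applying the trace theorem $\|\hat\psi\|_{L^2(\p\hat T)}\lesssim\|\hat\psi\|_{H^1(\hat T)}$, and scaling back yields $\|\bar\psi\|_{L^2(\p\bar T)}^2\lesssim h_{\bar T}^{-1}\|\bar\psi\|_{L^2(\bar T)}^2 + h_{\bar T}\|\nab_{\bar\Gamma_h}\bar\psi\|_{L^2(\bar T)}^2$ for all $\bar\psi\in H^1(\bar T)$. The estimate on $T$ then follows immediately by invoking the $L^2$- and $H^1$-norm equivalences in \eqref{eqn:NormEquivyLag} (recall $h_T = h_{\bar T}$), and the estimate on $T^\ell$ follows in the same way from \eqref{eqn:NormEquivy}, using that lifting is a bijection $H^1(T)\to H^1(T^\ell)$ so that every $H^1(T^\ell)$ function is the lift of some $\psi\in H^1(T)$. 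Note that no polynomial structure is needed here, so these bounds hold for all $\psi\in H^1(T)$ and all $\psi\in H^1(T^\ell)$ as claimed.

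For the inverse estimates I would start from the classical polynomial inverse estimate on the flat element, $\|\bar\psi\|_{H^m(\bar T)}\lesssim h_{\bar T}^{-m}\|\bar\psi\|_{L^2(\bar T)}$, valid for $\bar\psi\in\mathbb{P}_k(\bar T)$. Transporting this to $T$ requires bounding $\|\psi\|_{H^m(T)}$ by the $H^j(\bar T)$-seminorms for $j\le m$ via the chain rule, which is exactly where the geometry enters: differentiating $\psi = \bar\psi\circ\bp_{k_g}^{-1}$ repeatedly produces, by Fa\`a di Bruno, products of lower-order surface derivatives of $\bar\psi$ with derivatives of $\bp_{k_g}$, and one must check these geometric factors are controlled uniformly in $h$. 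The estimate on $T^\ell$ then follows by combining the $T$-estimate with the higher-order norm equivalence $|\psi^\ell|_{H^m(T^\ell)}\lesssim\sum_{j=1}^m|\psi|_{H^j(T)}$ from \eqref{eqn:NormEquivy} and the $L^2$ equivalence $\|\psi\|_{L^2(T)}\approx\|\psi^\ell\|_{L^2(T^\ell)}$, summing the resulting finite sum in $h^{-j}$ with dominant term $h^{-m}$. The only step beyond bookkeeping is the control of the higher-order chain rule on the curved elements, but this is precisely the content of Lemma \ref{lem:ChainRuleFun} and the isoparametric estimates of \cite{Demlow09}, so it may be cited rather than rederived.
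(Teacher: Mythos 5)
Your proposal is correct and follows essentially the same route as the paper: the standard scaled trace and inverse inequalities on the affine element are transported to $T$ via the norm equivalences \eqref{eqn:NormEquivyLag} and then to $T^\ell$ via \eqref{eqn:NormEquivy}, with the $H^m$ case controlled by the finite sum of lower-order seminorms whose dominant term is $h^{-m}$. The only cosmetic difference is that you invoke Lemma \ref{lem:ChainRuleFun} and Fa\`a di Bruno for the higher-order chain rule, whereas the needed control is already packaged in the seminorm equivalences \eqref{eqn:NormEquivy}--\eqref{eqn:NormEquivyLag} that the paper cites directly.
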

\begin{proof}
The proof of the first trace inequality \eqref{tracineq}
follows from the standard trace inequality on affine faces
and mapping the result to $T$ using \eqref{eqn:NormEquivyLag}.
The second inequality in \eqref{tracineq}
follows from the first and an equivalence of norms in \eqref{eqn:NormEquivy}.
Likewise, the inverse inequalities \eqref{inveineq} follow
from the inverse inequality on affine faces and \eqref{eqn:NormEquivy}--\eqref{eqn:NormEquivyLag}.
\end{proof}

\begin{proposition}\label{prop:NormEquivalence}
There holds
\begin{align}
\sum_{T\in \calT_h} \|\nab^2_{\Gamma_h} \psi\|_{L^2(T)}^2 \lesssim  \|\psi\|_{2,h}^2,\quad \text{and}\quad
\sum_{T^\ell \in \calT_h^\ell} \|\nab^2_{\Gamma} \psi^\ell\|_{L^2(T^\ell)}^2 \lesssim  \|\psi^\ell\|_{2}^2\quad \forall \psi \in V_h.
\label{ineq.1}
\end{align}
Consequently, we have the following norm equivalences:
\begin{alignat}{2}
\label{NormCompar.1}
\|\psi\|_{2,h}\approx \tbar{\psi}_{2,h}\approx \tbar{\psi}_{H_h},\quad\text{and}\quad
\|\psi^\ell\|_{2}
\approx \tbar{\psi^\ell}_{2}\approx \tbar{\psi^\ell}_H
\qquad \forall \psi\in V_h.
\end{alignat}
\end{proposition}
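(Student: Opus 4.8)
The plan is to reduce both equivalences in \eqref{NormCompar.1} to the single estimate \eqref{ineq.1}, and then to prove \eqref{ineq.1} through an elementwise integration-by-parts identity obtained by combining Lemmas \ref{lem:LapLap} and \ref{lem:HessForm}. I carry out the argument on $\calT_h$; the one on $\calT_h^\ell$ is identical, since those two lemmas are valid on any smooth surface patch and $\Gamma_h$ is piecewise smooth.

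For the equivalences, the bounds $\|\psi\|_{2,h}\le\tbar{\psi}_{2,h}\le\tbar{\psi}_{H_h}$ hold trivially because each norm only adds nonnegative terms, so it suffices to show $\tbar{\psi}_{H_h}\lesssim\|\psi\|_{2,h}$, i.e.\ to control the three additional contributions. The edge term $\sum_{e}h_e\|\avg{H_{\Gamma_h}(\psi)}\|_{L^2(e)}^2$ is bounded by $\sum_{T}\|H_{\Gamma_h}(\psi)\|_{L^2(T)}^2$ by applying the trace inequality \eqref{tracineq} followed by the inverse inequality \eqref{inveineq}. The seminorm $\sum_T|\psi|_{H^2(T)}^2$ is equivalent, up to lower-order curvature terms bounded by $\|\nab_{\Gamma_h}\psi\|^2$ (via Lemma \ref{lem:HessIdentities} and \eqref{eqn:NotSymmetric}), to $\sum_T\|\nab^2_{\Gamma_h}\psi\|_{L^2(T)}^2$, which \eqref{ineq.1} controls by $\|\psi\|_{2,h}^2$; finally $\sum_T h_T^2|\psi|_{H^3(T)}^2\lesssim\sum_T|\psi|_{H^2(T)}^2$ by \eqref{inveineq}. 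Thus everything collapses once \eqref{ineq.1} is established.

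To prove \eqref{ineq.1}, I first record the pointwise identity
\[
\|\nab^2_{\Gamma_h}\psi\|^2 = \|H_{\Gamma_h}(\psi)\|^2 + \tfrac12(\Delta_{\Gamma_h}\psi)^2,
\]
which follows from the algebraic identities in the proof of Lemma \ref{lem:HessForm2} together with the elementary computation $(\nab^2_{\Gamma_h}\psi\,\bn^\times):(\bn^\times\nab^2_{\Gamma_h}\psi)=(\Delta_{\Gamma_h}\psi)^2-\|\nab^2_{\Gamma_h}\psi\|^2$, checked in a local orthonormal frame using that $\nab^2_{\Gamma_h}\psi$ is symmetric and tangential. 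Next, applying Lemmas \ref{lem:LapLap} and \ref{lem:HessForm} with $\phi=\psi$ on a single element and subtracting cancels $\int_T(\Delta^2_{\Gamma_h}\psi)\psi$ and the common boundary term; substituting the pointwise identity to eliminate $(\Delta_{\Gamma_h}\psi)^2$ then leaves
\[
\int_T\|\nab^2_{\Gamma_h}\psi\|^2 = 2\int_T\|H_{\Gamma_h}(\psi)\|^2 + \int_T K\,|\nab_{\Gamma_h}\psi|^2 + \int_{\p T}\big(\Delta_{\Gamma_h}\psi\,(\nab_{\Gamma_h}\psi\cdot\bmu_T) - \bmu_T^\intercal\nab^2_{\Gamma_h}\psi\,\nab_{\Gamma_h}\psi\big),
\]
where $K$ denotes the (uniformly bounded) Gauss curvature of $\Gamma_h$. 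Summing over $T\in\calT_h$, the two bulk terms are $\lesssim\|\psi\|_{2,h}^2$, and the remaining task is to control the sum of the boundary integrals.

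The main obstacle is precisely this reorganization of the element-boundary terms into a form controlled by the penalized co-normal jump. Writing $\Delta_{\Gamma_h}\psi=\bmu_T^\intercal\nab^2_{\Gamma_h}\psi\,\bmu_T+\bt_T^\intercal\nab^2_{\Gamma_h}\psi\,\bt_T$, the boundary integrand reduces to $(\bt_T^\intercal\nab^2_{\Gamma_h}\psi\,\bt_T)(\nab_{\Gamma_h}\psi\cdot\bmu_T)-(\bt_T^\intercal\nab^2_{\Gamma_h}\psi\,\bmu_T)(\nab_{\Gamma_h}\psi\cdot\bt_T)$. Since $\psi$ is $C^0$, the edge-tangential derivative $\nab_{\Gamma_h}\psi\cdot\bt_h$ is single-valued, so the first term pairs a second-order average with the jump $\jump{\nab_{\Gamma_h}\psi}$, whereas the second term pairs a second-order quantity with a continuous factor and must be recast by integrating by parts \emph{along} each edge; here the non-commutativity of surface derivatives and the geodesic curvature of $\p T$ enter, and the endpoint contributions at mesh vertices must be shown to cancel by continuity. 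One must also track that adjacent curved elements of $\Gamma_h$ do not share a tangent plane, so $\bmu_\pm$ are not anti-parallel, with mismatches of size $O(h^{k_g})$ controlled by Lemma \ref{lem:GeomEstimates1}. After these manipulations each edge integral takes the form $\int_e\avg{\cdots}\jump{\nab_{\Gamma_h}\psi}$, and Cauchy--Schwarz with the scaled trace and inverse inequalities yields
\[
\Big|\sum_{e\in\calE_h}\int_e\avg{\cdots}\,\jump{\nab_{\Gamma_h}\psi}\Big| \lesssim X\,\|\psi\|_{2,h},\qquad X^2:=\sum_{T\in\calT_h}\|\nab^2_{\Gamma_h}\psi\|_{L^2(T)}^2.
\]
Combining with the bulk bound gives $X^2\lesssim\|\psi\|_{2,h}^2+X\,\|\psi\|_{2,h}$, and Young's inequality absorbs the cross term to produce \eqref{ineq.1}.
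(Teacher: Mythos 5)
Your reduction of the norm equivalences \eqref{NormCompar.1} to the single estimate \eqref{ineq.1} is sound and matches the paper, and your pointwise identity $|\nab^2_{\Gamma_h}\psi|^2=|H_{\Gamma_h}(\psi)|^2+\tfrac12(\Delta_{\Gamma_h}\psi)^2$ is correct (it is the trace/trace-free decomposition; note it shows $H_{\Gamma_h}(\psi)$ is trace-free, so the whole difficulty of \eqref{ineq.1} is recovering $\int_T(\Delta_{\Gamma_h}\psi)^2$, i.e.\ a Korn-type inequality). But your route to \eqref{ineq.1} is genuinely different from the paper's, and it has a gap at exactly the step you flag as ``the main obstacle.'' The paper does not integrate by parts elementwise at all: it observes that $\bcurl_{\Gamma_h}\psi$ belongs to the surface BDM space $\Sigma_h$ (Lemma \ref{lem:CurlBDM}), invokes a discrete Korn inequality for such $\bH(\mathrm{div})$-conforming fields from the literature, identifies $E_{\Gamma_h}(\bcurl_{\Gamma_h}\psi)=H_{\Gamma_h}(\psi)$ and $|\Grad_{\Gamma_h}\bcurl_{\Gamma_h}\psi|=|\nab^2_{\Gamma_h}\psi|$, and controls the jump $[\bcurl_{\Gamma_h}\psi]$ by $\jump{\nab_{\Gamma_h}\psi}$ plus an $O(h^{k_g})$ co-normal mismatch. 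All the edge/vertex bookkeeping you attempt is thereby outsourced to that Korn inequality.

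The gap in your argument is the asserted cancellation of the vertex terms. After you integrate $\int_e\partial_t\big(\jump{\nab_{\Gamma_h}\psi}\big)\,\partial_t\psi$ by parts along each edge, the endpoint contribution at a mesh vertex $v$ is $\sum_{e\ni v}\pm\,\jump{\nab_{\Gamma_h}\psi}\big|_e(v)\,\partial_{t_e}\psi(v)$. This does \emph{not} vanish by continuity: already in the flat case with three triangles meeting at $v$ at angles $0,2\pi/3,4\pi/3$, one can choose gradients $g_1=0$, $g_2=-(\sqrt3/2,1/2)$, $g_3=(0,-1)$ satisfying all tangential-continuity constraints across the three edges, for which the local vertex sum equals $\sqrt3/2\neq0$. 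Since these endpoint terms are pointwise products of a gradient jump and a tangential derivative, estimating them instead of cancelling them costs a factor $h^{-1}$ relative to $\|\psi\|_{2,h}^2$, which destroys the bound. A secondary problem is that your ``average times jump'' rewriting of $a_+b_++a_-b_-$ drops the cross term $\tfrac12(a_+-a_-)(b_+-b_-)$; on curved edges $a_+-a_-=\jump{\bt^\intercal\nab^2_{\Gamma_h}\psi\,\bt}$ contains geodesic-curvature multiples of co-normal derivatives, and $b_+-b_-$ is an $O(1)$ average, so this term is of size $\int_e|\nab_{\Gamma_h}\psi|^2$ and again loses $h^{-1}$ unless further delicate cancellations (e.g.\ $|\kappa_g^++\kappa_g^-|\lesssim h^{k_g}$) are established. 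In short, the direct discrete Miranda--Talenti route you sketch is not completed and, as stated, fails at the vertex terms; the paper's BDM Korn inequality is what replaces this step.
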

\begin{proof}
The proof of \eqref{ineq.1} is given in Appendix \ref{app:NormE}. 
The norm equivalences given in \eqref{NormCompar.1} then follow
from \eqref{ineq.1} and the inverse and trace estimates in Proposition \ref{prop:Trace}.
\end{proof}

\begin{lemma} \label{lem:ContandCoer}
There holds for all $\psi,\chi\in W$,
\begin{align}
\label{eqn:ContEst}
\begin{aligned}
|a_h(\psi,\chi)|&\le (1+\sigma) \tbar{\psi}_{2,h}\tbar{\chi}_{2,h}\qquad &&\forall \psi,\chi\in W,\\
|a_h^\ell(\psi,\chi)|&\le (1+\sigma) \tbar{\psi}_{2}\tbar{\chi}_{2}\qquad &&\forall \psi,\chi\in W^\ell.
\end{aligned}
\end{align}
Moreover, for any $\alpha\in (0,1)$,
there exists $\sigma_0>0$ 
such that for $\sigma\ge \sigma_0$, there holds
\begin{equation}
\label{eqn:CoerciveOnGam}
\alpha \|\psi\|_{2,h}^2 \le a_h(\psi,\psi)\quad \forall \psi\in V_h,\quad 
\text{and}\quad
\alpha \|\psi^\ell\|_2^2 \le a_h^\ell(\psi^\ell,\psi^\ell)\quad \forall \psi\in V_h.
\end{equation}
Consequently, there exists a unique solution to \eqref{eqn:C0IPMethod} provided $\sigma$ is sufficiently large. \label{lem}
\end{lemma}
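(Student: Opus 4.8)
The plan is to prove continuity and coercivity in parallel for the two bilinear forms $a_h$ and $a_h^\ell$, since their structures are identical and only the underlying surface (discrete $\Gamma_h$ versus exact $\Gamma$) differs; the arguments are the standard $C^0$ IP arguments adapted to the surface setting. First I would establish continuity \eqref{eqn:ContEst}. Writing out the five terms of $a_h(\psi,\chi)$, I would bound the volume terms $\sum_T\int_T H_{\Gamma_h}(\psi):H_{\Gamma_h}(\chi)$ and $\int_{\Gamma_h}\nab_{\Gamma_h}\psi\cdot\nab_{\Gamma_h}\chi$ directly by Cauchy--Schwarz, each controlled by the $\|\cdot\|_{2,h}$-portion of the triple-bar norm. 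For the two symmetric consistency terms over edges, I would apply Cauchy--Schwarz edgewise, inserting the scaling factors $h_e^{1/2}$ and $h_e^{-1/2}$ so that the average of $H_{\Gamma_h}$ is measured in the $\sum_e h_e\|\avg{H_{\Gamma_h}(\cdot)}\|_{L^2(e)}^2$ part and the jump of $\nab_{\Gamma_h}$ in the $\sum_e h_e^{-1}\|\jump{\nab_{\Gamma_h}(\cdot)}\|_{L^2(e)}^2$ part; both are captured by $\tbar{\cdot}_{2,h}$. The penalty term carries the factor $\sigma$ and is bounded by $\sigma$ times the jump seminorms. Collecting these and applying the elementary inequality $ab+cd\le(a^2+c^2)^{1/2}(b^2+d^2)^{1/2}$ gives the stated constant $(1+\sigma)$.

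For coercivity \eqref{eqn:CoerciveOnGam}, I would set $\chi=\psi$ and observe that the two symmetric consistency terms coincide, so
\[
a_h(\psi,\psi)=\sum_{T}\|H_{\Gamma_h}(\psi)\|_{L^2(T)}^2+\|\nab_{\Gamma_h}\psi\|_{L^2(\Gamma_h)}^2-2\sum_{e}\int_e\avg{\bt_h^\intercal H_{\Gamma_h}(\psi)\bmu_h}\jump{\nab_{\Gamma_h}\psi}+\sigma\sum_e h_e^{-1}\|\jump{\nab_{\Gamma_h}\psi}\|_{L^2(e)}^2.
\]
The crux is to absorb the indefinite middle term. Using Cauchy--Schwarz and the weighted Young inequality $2ab\le \varepsilon^{-1}a^2+\varepsilon b^2$ on each edge, the cross term is bounded by $\varepsilon^{-1}\sum_e h_e\|\avg{H_{\Gamma_h}(\psi)}\|_{L^2(e)}^2+\varepsilon\sum_e h_e^{-1}\|\jump{\nab_{\Gamma_h}\psi}\|_{L^2(e)}^2$. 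The new object $\sum_e h_e\|\avg{H_{\Gamma_h}(\psi)}\|_{L^2(e)}^2$ is not part of $\|\psi\|_{2,h}^2$, and this is where Proposition \ref{prop:NormEquivalence} is indispensable: the norm equivalence $\|\psi\|_{2,h}\approx\tbar{\psi}_{2,h}$ (valid for finite element functions $\psi\in V_h$, which is precisely why coercivity is asserted only on $V_h$, not on all of $W$) lets me control this edge-average term by a trace/inverse estimate followed by $C\|\psi\|_{2,h}^2$, with $C$ independent of $h$.

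Thus the main obstacle, and the step I would treat most carefully, is the absorption of the consistency term for finite element functions: after applying the trace inequality \eqref{tracineq} and inverse inequality \eqref{inveineq} from Proposition \ref{prop:Trace} to the discrete function $\psi$, the edge-average of the Hessian-type operator satisfies $\sum_e h_e\|\avg{H_{\Gamma_h}(\psi)}\|_{L^2(e)}^2\lesssim\sum_T\|H_{\Gamma_h}(\psi)\|_{L^2(T)}^2\le\|\psi\|_{2,h}^2$, the first inequality using \eqref{ineq.1} to bound $\nab_{\Gamma_h}^2\psi$ and hence $H_{\Gamma_h}(\psi)$. Choosing $\varepsilon$ small and then $\sigma\ge\sigma_0$ large enough, the negative contributions are dominated, leaving $a_h(\psi,\psi)\ge\alpha\|\psi\|_{2,h}^2$ for any prescribed $\alpha\in(0,1)$; the identical argument on $\Gamma$ gives the second estimate. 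Finally, positive definiteness on $V_{h,0}$ follows since $\|\cdot\|_{2,h}$ is a norm there (the gradient term vanishes only for constants, which are excluded by the zero-mean constraint), so the Lax--Milgram lemma, together with the continuity and coercivity just established, yields existence and uniqueness of $\phi_h$ solving \eqref{eqn:C0IPMethod}.
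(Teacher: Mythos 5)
Your proof is correct and follows essentially the same route as the paper's: Cauchy--Schwarz with the $h_e^{\pm 1/2}$ weights for continuity, and absorption of the consistency term via the discrete trace/inverse estimates of Proposition \ref{prop:Trace} plus a weighted Young inequality for coercivity, restricted to $V_h$ exactly so that those discrete estimates apply. (One cosmetic point: with your convention $2ab\le\varepsilon^{-1}a^2+\varepsilon b^2$ placing $\varepsilon^{-1}$ on the Hessian-average term, you need $\varepsilon$ \emph{large}, not small, to absorb that term into $\sum_{T}\|H_{\Gamma_h}(\psi)\|_{L^2(T)}^2$; the paper writes the Young inequality the other way around.)
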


\begin{proof}
By 
the Cauchy-Schwarz inequality,
\begin{align*}
\left|a_{h}(\psi, \chi)\right| 
& \leq \sum_{T \in \calT_{h}}\left\|H_{\Gamma_{h}}(\psi)\right\|_{L^{2}(T)}\left\|H_{\Gamma_{h}}(\chi)\right\|_{L^{2}(T)} + \left\|\nab_{\Gamma_{h}}\psi\right\|_{L^{2}\left(\Gamma_{h}\right)}\left\|\nab_{\Gamma_{h}}\chi\right\|_{L^{2}\left(\Gamma_{h}\right)} \\
& \qquad + \sum_{e \in \calE_{h}}h_{e}^{\frac{1}{2}}\left\|\avg{\bt_h^{\intercal}H_{\Gamma_{h}}(\psi)\bmu_h}\right\|_{L^{2}(e)}h_{e}^{- \frac{1}{2}}\left\|\jump{\nab_{\Gamma_{h}}\chi}\right\|_{L^{2}(e)} \\
& \qquad + \sum_{e \in \calE_{h}}h_{e}^{\frac{1}{2}}\left\|\avg{\bt_h^{\intercal}H_{\Gamma_{h}}(\chi)\bmu_h}\right\|_{L^{2}(e)}h_{e}^{- \frac{1}{2}}\left\|\jump{\nab_{\Gamma_{h}}\psi}\right\|_{L^{2}(e)} \\
& \qquad + \sum_{e \in \calE_{h}}\left({\sigma h_e^{-1}}\right)^{\frac{1}{2}}\left\|\jump{\nab_{\Gamma_{h}}\psi}\right\|_{L^{2}(e)}\left({\sigma h_e^{-1}}\right)^{\frac{1}{2}}\left\|\jump{\nab_{\Gamma_{h}}\chi}\right\|_{L^{2}(e)} \\
%
& \leq (1+\sigma)\tbar{\psi}_{2, h}\tbar{\chi}_{2, h}.
\end{align*}
The second inequality in \eqref{eqn:ContEst} is proved similarly.

Next, by Proposition \ref{prop:Trace}, we find
\[
\sum_{e \in \calE_{h}}h_{e}\left\|\avg{\bt_h^{\intercal}H_{\Gamma_{h}}(\psi)\bmu_h}\right\|_{L^{2}(e)}^{2}
\leq C\sum_{T \in \calT_{h}}\left\|H_{\Gamma_{h}}(\psi)\right\|_{L^{2}(T)}^{2}\quad \forall \psi\in V_h
\]
for some constant $C>0$ independent of $h$.
Consequently, by the Cauchy-Schwarz inequality,
\begin{align*}
2\sum_{e \in \calE_{h}}\int_{e}\avg{\bt_h^{\intercal}H_{\Gamma_{h}}(\psi)\bmu_h}\jump{\nab_{\Gamma_{h}}\psi} 
& \leq \varepsilon\sum_{e \in \calE_{h}}h_{e}\left\|\avg{\bt_h^{\intercal}H_{\Gamma_{h}}(\psi)\bmu_h}\right\|_{L^{2}(e)}^{2} + \varepsilon^{-1}\sum_{e \in \calE_{h}}h_{e}^{- 1}\left\|\jump{\nab_{\Gamma_{h}}\psi}\right\|_{L^{2}(e)}^{2} \\
& \leq C\varepsilon\sum_{T \in \calT_{h}}\left\|H_{\Gamma_{h}}(\psi)\right\|_{L^{2}(T)}^{2} + \varepsilon^{-1} \sum_{e \in \calE_{h}}h_{e}^{- 1}\left\|\jump{\nab_{\Gamma_{h}}\psi}\right\|_{L^{2}(e)}^{2}
\end{align*}
for any $\varepsilon>0$.
Given $\alpha\in (0,1)$, we choose $1 - C\varepsilon = \alpha$ 
and take $\sigma_0  = \alpha+\varepsilon^{-1}$. Then for $\sigma\ge \sigma_0$,
\begin{align*}
a_{h}(\psi, \psi) & = \sum_{T \in \calT_{h}}\left\|H_{\Gamma_{h}}(\psi)\right\|_{L^{2}(T)}^2 + \left\|\nab_{\Gamma_{h}}\psi\right\|_{L^{2}\left(\Gamma_{h}\right)}^2 - 2\sum_{e \in \calE_{h}}\int_{e}\avg{\bt_h^{\intercal}H_{\Gamma_{h}}(\psi)\bmu_h}\jump{\nab_{\Gamma_{h}}\psi} \\
& \qquad + \sigma\sum_{e \in \calE_{h}}{h_{e}}^{-1}\left\|\jump{\nab_{\Gamma_{h}}\psi}\right\|_{L^{2}(e)}^{2} \\
& \geq \sum_{T \in \calT_{h}}\left\|H_{\Gamma_{h}}(\psi)\right\|_{L^{2}(T)}^2 + \alpha\left\|\nab_{\Gamma_{h}}\psi\right\|_{L^{2}\left(\Gamma_{h}\right)}^2 - C\varepsilon\sum_{T \in \calT_{h}}\left\|H_{\Gamma_{h}}(\psi)\right\|_{L^{2}(T)}^{2} \\
& \qquad - {\varepsilon}^{-1}\sum_{e \in \calE_{h}}h_{e}^{- 1}\left\|\jump{\nab_{\Gamma_{h}}\psi}\right\|_{L^{2}(e)}^{2} + \sigma\sum_{e \in \calE_{h}}{h_{e}}^{-1}\left\|\jump{\nab_{\Gamma_{h}}\psi}\right\|_{L^{2}(e)}^{2} \\
& \geq \alpha\|\psi\|_{2, h}^{2}.
\end{align*}

The second coercivity result in \eqref{eqn:CoerciveOnGam} follows from the exact same arguments,
but using the second inequalities in \eqref{tracineq}--\eqref{inveineq} instead of the first.
\end{proof}

\section{Geometric consistency estimates} \label{sec-Geo}
In this section, we derive estimates related to the geometric inconsistencies  of the scheme. As shown in the next section, these involve the disparities between the linear (source) forms $\ell(\cdot)$ and $\ell_h(\cdot)$, as well as the bilinear forms $a^\ell_h(\cdot, \cdot)$ and $a_h(\cdot, \cdot)$. An upper bound of the former is given in the next result. Its proof is found in Appendix \ref{app:DifferRelationsProof}.

{
\begin{lemma} \label{lem:SourceFormCons}
There holds for all $\psi\in V_h$, 
\begin{align}
\big|\ell(\psi^\ell) - \ell_h(\psi)\big| & \le \| {\bm f} - {\bm F}_h^\ell\|_{L^2(\Gamma)}\tbar{\psi^\ell}_H, \label{6.5}
\end{align}
where 
\begin{equation}
    \label{eqn:FhlDef}
    {\bm F}_h^\ell = ({\bf I}- d {\bf H})^{-1} \left({\bf I}- \frac{({\bf P}\bn_h)\otimes {({\bf P}_{h}\bn})}{\bn\cdot \bn_h}\right){\bm f}_h^\ell.
\end{equation}
\end{lemma}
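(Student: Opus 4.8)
The plan is to express both linear forms as integrals over the exact surface $\Gamma$ tested against the common quantity $\bcurl_\Gamma\psi^\ell$, and then to bound their difference by the Cauchy--Schwarz inequality. First I would rewrite $\ell$. Using the definition ${\rm curl}_\Gamma{\bm f} = {\rm div}_\Gamma({\bm f}\times\bn)$, applying Lemma \ref{lem:IBP} componentwise on the boundaryless surface $\Gamma$ (the curvature term drops since $({\bm f}\times\bn)\cdot\bn = 0$), and invoking the scalar triple product $({\bm f}\times\bn)\cdot\nab_\Gamma\psi^\ell = {\bm f}\cdot(\bn\times\nab_\Gamma\psi^\ell)$, I obtain $\ell(\psi^\ell) = -\int_\Gamma({\rm curl}_\Gamma{\bm f})\psi^\ell = \int_\Gamma{\bm f}\cdot\bcurl_\Gamma\psi^\ell$.

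The heart of the proof is the matching representation $\ell_h(\psi) = \int_\Gamma{\bm F}_h^\ell\cdot\bcurl_\Gamma\psi^\ell$ with ${\bm F}_h^\ell$ exactly as in \eqref{eqn:FhlDef}. Beginning from $\ell_h(\psi) = \int_{\Gamma_h}{\bm f}_h\cdot\bcurl_{\Gamma_h}\psi$, I would change variables to $\Gamma$ via the measure identity $\int_{\Gamma_h}g = \int_\Gamma(\mu_h^{-1}g)^\ell$, and substitute the gradient transformation $\nab_{\Gamma_h}\psi = {\bf P}_h({\bf I}-d{\bf H})\nab_\Gamma\psi^\ell$ (cf.~Lemma \ref{lem:ChainRuleFun}) into $\bcurl_{\Gamma_h}\psi = \bn_h\times\nab_{\Gamma_h}\psi$, using $\bn_h\times({\bf P}_h\bm v) = \bn_h\times\bm v$ to discard ${\bf P}_h$. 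This reduces the integrand to a pointwise pairing of ${\bm f}_h^\ell$ against $\bn_h\times\bigl(({\bf I}-d{\bf H})\nab_\Gamma\psi^\ell\bigr)$. The crux, and the step I expect to be the main obstacle, is the pointwise algebraic identity showing this equals ${\bm F}_h^\ell\cdot(\bn\times\nab_\Gamma\psi^\ell)$ with ${\bm F}_h^\ell$ precisely \eqref{eqn:FhlDef}. The inverse $({\bf I}-d{\bf H})^{-1}$ and the scalar $\bn\cdot\bn_h$ arise from the cofactor identity $(M\bm a)\times(M\bm b) = \det(M)\,M^{-\intercal}(\bm a\times\bm b)$ applied with $M = {\bf I}-d{\bf H}$ (whose determinant is $\prod_i(1-d\kappa_i) = \mu_h/(\bn\cdot\bn_h)$), together with the transformation rule for the unit normal under $\bp$; the rank-one correction $({\bf P}\bn_h)\otimes({\bf P}_h\bn)/(\bn\cdot\bn_h)$ then accounts for the mismatch between the tangent planes of $\Gamma$ and $\Gamma_h$, using the mapping properties recorded in Lemma \ref{lem:GeomEstimates1}. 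This detailed computation is deferred to the appendix. I note that only the tangential-to-$\Gamma$ action against $\nab_\Gamma\psi^\ell$ is tested, so the normal component of ${\bm F}_h^\ell$ is immaterial to the identity, which is what makes it hold.

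With both representations established, subtracting gives $\ell(\psi^\ell)-\ell_h(\psi) = \int_\Gamma({\bm f}-{\bm F}_h^\ell)\cdot\bcurl_\Gamma\psi^\ell$, and the Cauchy--Schwarz inequality yields $|\ell(\psi^\ell)-\ell_h(\psi)|\le\|{\bm f}-{\bm F}_h^\ell\|_{L^2(\Gamma)}\|\bcurl_\Gamma\psi^\ell\|_{L^2(\Gamma)}$. Since $\nab_\Gamma\psi^\ell$ is tangential, $|\bn\times\nab_\Gamma\psi^\ell| = |\nab_\Gamma\psi^\ell|$ pointwise, so $\|\bcurl_\Gamma\psi^\ell\|_{L^2(\Gamma)} = \|\nab_\Gamma\psi^\ell\|_{L^2(\Gamma)} \le \tbar{\psi^\ell}_H$, the final bound holding because $\|\nab_\Gamma\psi^\ell\|_{L^2(\Gamma)}^2$ is one of the nonnegative terms comprising $\tbar{\psi^\ell}_H^2$. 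This establishes \eqref{6.5} with constant one, as stated.
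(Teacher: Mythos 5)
Your outline follows essentially the same path as the paper's proof: both arguments pull $\ell_h(\psi)$ back to $\Gamma$, identify the resulting density against $\bcurl_\Gamma\psi^\ell$ as ${\bm F}_h^\ell$, and finish with Cauchy--Schwarz together with $\|\bcurl_\Gamma\psi^\ell\|_{L^2(\Gamma)}=\|\nab_\Gamma\psi^\ell\|_{L^2(\Gamma)}\le\tbar{\psi^\ell}_H$. The paper gets the curl pullback by citing the identity \eqref{eqn:curlChain} of Lemma \ref{lem:ChainRuleFun}, $\bcurl_{\Gamma_h}\psi=\mu_h\,{\bf N}{\bf M}\,(\bcurl_\Gamma\psi^\ell)^e$, whereas you propose to rederive it on the spot via the cofactor identity for $M={\bf I}-d{\bf H}$; that route does work (decompose $\bn_h=(\bn\cdot\bn_h)\bn+{\bf P}\bn_h$ and use $M\bn=\bn$, $\det M=\mu_h/(\bn\cdot\bn_h)$) and is arguably more direct than the paper's computation in Lemma \ref{lem:ChainRuleFun}.

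The one substantive issue is that the step you defer is the entire content of the lemma, and as sketched it does not quite land on \eqref{eqn:FhlDef}. Carrying out your cofactor computation produces the rank-one correction in the form $(\bn_h\otimes\bn)/(\bn\cdot\bn_h)$ acting on ${\bm f}_h^\ell$, not $({\bf P}\bn_h)\otimes({\bf P}_h\bn)/(\bn\cdot\bn_h)$. Expanding $\bn_h\otimes\bn-({\bf P}\bn_h)\otimes({\bf P}_h\bn)$, the discrepancy contains the term $(\bn\cdot\bn_h)(\bn_h\cdot{\bm f}_h^\ell)\,{\bf P}\bn_h$, which is tangential to $\Gamma$ and therefore \emph{not} annihilated by pairing with $\bcurl_\Gamma\psi^\ell$; your remark that ``only the tangential action is tested'' disposes of the pieces proportional to $\bn$ (via ${\bf M}\bn=\bn$ and $\bn\cdot\bcurl_\Gamma\psi^\ell=0$) but not of this one. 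To kill it you must invoke the standing normalization ${\bm f}_h\cdot\bn_h=0$ made just after \eqref{eqn:C0IPMethod}, which is exactly what the paper does and which your proposal never mentions. With that assumption added, your argument closes and gives \eqref{6.5} with constant one as claimed.
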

}
\begin{remark}\label{rem:SourceBounds}
Note that, by \eqref{eqn:GeoBounds}, 
$\|{\bm F}_h^\ell- {\bm f}_h^\ell\|_{L^2(\Gamma)}\lesssim h^{k_g+1}\|{\bm f}_h^\ell\|_{L^2(\Gamma)}$, and therefore
$\|{\bm f}-{\bm F}_h^\ell\|_{L^2(\Gamma)}\lesssim 
\|{\bm f}-{\bm f}_h^\ell\|_{L^2(\Gamma)}+h^{k_g+1}\|{\bm f}_h^\ell\|_{L^2(\Gamma)}$.
\end{remark}

To prove analogous results for the discrete bilinear forms $a^\ell_h(\cdot, \cdot)$ and $a_h(\cdot, \cdot)$, we require two technical preliminary results. The proofs of the next two lemmas are given in Appendix   \ref{app:DifferRelationsProof}.

\begin{lemma} \label{lem:DifferRelations}
There holds for all $\psi\in W^\ell$,
\begin{align}
\|(H_\Gamma(\psi))^e - H_{\Gamma_h}(\psi^e)\|_{L^2(\calT_h)} & \lesssim h^{k_g}\tbar{\psi}_H, \label{ApproxOper.1} \\
\|(\nab_\Gamma \psi)^e - \nab_{\Gamma_h}\psi^e\|_{L^2(\calT_h)} & \lesssim h^{k_g}\tbar{\psi}_H, \label{ApproxOper.2} \\
h^{- \frac{1}{2}}\|(\jump{\nab_\Gamma \psi})^e - \jump{\nab_{\Gamma_h}\psi^e}\|_{L^2(\calE_h)} & \lesssim h^{k_g}\tbar{\psi}_H, \label{ApproxOper.3} \\
h^{\frac{1}{2}}\|(\avg{\bt^\intercal H_\Gamma(\psi)\bmu})^e - \avg{\bt^\intercal_h H_{\Gamma_h}(\psi^e)\bmu_h}\|_{L^2(\calE_h)} & \lesssim h^{k_g}\tbar{\psi}_H. \label{ApproxOper.4}
\end{align}
\end{lemma}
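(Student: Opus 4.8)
The plan is to prove the four estimates in Lemma \ref{lem:DifferRelations} by systematically comparing each surface operator with its discrete counterpart, exploiting the fact that all the relevant geometric quantities ($\bn$ vs.\ $\bn_h$, $\bmu$ vs.\ $\bmu_h$, $\bt$ vs.\ $\bt_h$, the projections ${\bf P}$ vs.\ ${\bf P}_h$, and the Weingarten map $d{\bf H}$) differ by factors that are controlled by the geometric estimates in Lemma \ref{lem:GeomEstimates1} and \eqref{eqn:GeoBounds}. The unifying strategy is: express each operator explicitly in terms of $\bn$ (or $\bn_h$) via the identities in \eqref{eqn:streamGrad}--\eqref{eqn:deformGrad} and \eqref{nHessIdentity}; form the difference; expand it as a sum of terms in which exactly one geometric factor has been replaced by its discrete analogue; and bound each such term by the product of the relevant $L^\infty$-geometric discrepancy (all of which are $O(h^{k_g})$ or better) and an $L^2$-norm of derivatives of $\psi$ that is controlled by $\tbar{\psi}_H$.

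First I would establish \eqref{ApproxOper.2}, the gradient estimate, as a warm-up, since $\nab_\Gamma \psi = {\bf P}\nab \psi^e$ while $\nab_{\Gamma_h}\psi^e = {\bf P}_h \nab\psi^e$ (here one must be careful to work with a common ambient extension and use the chain rule from Lemma \ref{lem:ChainRuleFun}); the difference reduces to $({\bf P}-{\bf P}_h)\nab\psi^e$, and since ${\bf P}-{\bf P}_h = \bn_h\otimes\bn_h - \bn\otimes\bn$ is $O(h^{k_g})$ in $L^\infty$ by \eqref{eqn:GeoBounds}, integrating over $\calT_h$ and using the norm equivalences \eqref{eqn:NormEquivy} to pass between $\|\cdot\|_{L^2(T)}$ and $\|\cdot\|_{L^2(T^\ell)}$ yields the claim. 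Next I would treat the Hessian-type estimate \eqref{ApproxOper.1}: using \eqref{eqn:deformGrad}, $H_\Gamma(\psi) = \tfrac12(\bn^\times \uD^2\psi - (\uD^2\psi)^\intercal \bn^\times)$, so the difference $H_\Gamma(\psi)^e - H_{\Gamma_h}(\psi^e)$ splits into a part coming from $\bn^\times - \bn_h^\times$ (controlled by $\|\bn-\bn_h\|_{L^\infty}\lesssim h^{k_g}$ times $\|\uD^2\psi\|_{L^2}$) and a part coming from $\uD^2\psi^e$ versus $\uD_h^2\psi^e$ (the discrete nonsymmetric Hessian), where the latter difference involves second ambient derivatives and the discrepancy between the surface and discrete-surface differentiation operators. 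Here the Hessian terms are bounded using $\sum_T |\psi|_{H^2(T)}^2$ and $h_T^2|\psi|_{H^3(T)}^2$, which is precisely why the stronger norm $\tbar{\psi}_H$ (rather than $\tbar{\psi}_{2,h}$) appears on the right-hand side.

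For the two edge estimates \eqref{ApproxOper.3} and \eqref{ApproxOper.4}, I would combine the pointwise operator-difference bounds just obtained with the trace inequality \eqref{tracineq} to move from face integrals to edge integrals, and with the co-normal/tangent discrepancy estimates \eqref{GeoApprox.3}--\eqref{GeoApprox.5}. For the jump of the gradient \eqref{ApproxOper.3}, the difference $\jump{\nab_\Gamma\psi}^e - \jump{\nab_{\Gamma_h}\psi^e}$ involves both the operator difference ${\bf P}-{\bf P}_h$ (from \eqref{ApproxOper.2}) tested against $\bmu_h$ and the co-normal difference $\bmu - {\bf P}\bmu_h$, which by \eqref{GeoApprox.3} is the favorable $O(h^{k_g+1})$; after applying the trace inequality the $h^{-1/2}$ weighting is exactly absorbed. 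For \eqref{ApproxOper.4}, the average of $\bt^\intercal H_\Gamma(\psi)\bmu$ is the most involved: one must simultaneously account for $\bt-\bt_h$, $\bmu-\bmu_h$ (both $O(h^{k_g})$), and the operator discrepancy in $H_\Gamma$, again converting face to edge norms via \eqref{tracineq} with the $h^{1/2}$ weight balancing the resulting powers. \textbf{The main obstacle} I anticipate is the bookkeeping in \eqref{ApproxOper.1} and \eqref{ApproxOper.4}: the nonsymmetric surface Hessian $\uD^2\psi$ does not transform cleanly between $\Gamma$ and $\Gamma_h$ because the two ambient differentiation frames differ, so one cannot simply replace $\bn$ by $\bn_h$ in a single identity — instead one must carefully track the chain-rule Jacobian $\Grad\bp$ (equivalently $({\bf P}-d{\bf H})$) relating surface derivatives to discrete-surface derivatives, controlling the curvature-induced $d{\bf H}$ terms with $\|d\|_{L^\infty(\Gamma_h)}\lesssim h^{k_g+1}$, and ensuring that the third-order remainder terms (which is why $h_T^2|\psi|_{H^3(T)}^2$ sits inside $\tbar{\psi}_H$) are correctly tracked rather than silently dropped.
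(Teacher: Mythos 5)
Your plan follows essentially the same route as the paper's proof in Appendix \ref{app:DifferRelationsProof}: express each operator difference via the chain-rule identities of Lemma \ref{lem:ChainRuleFun}, control the geometric discrepancies with Lemma \ref{lem:GeomEstimates1} and \eqref{eqn:GeoBounds} (correctly identifying that the sharper $O(h^{k_g+1})$ bound \eqref{GeoApprox.3} is what saves the jump estimate \eqref{ApproxOper.3}, while the $O(h^{k_g})$ bounds \eqref{GeoApprox.4}--\eqref{GeoApprox.5} suffice for \eqref{ApproxOper.4} because of the $h^{1/2}$ weight), and convert face to edge norms with the trace inequality \eqref{tracineq}. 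The only cosmetic difference is that you phrase the Hessian comparison through the nonsymmetric operator $\uD^2\psi$ where the paper uses the projected Hessian $\nab_\Gamma^2\psi$ together with the explicit transformation formula \eqref{eqn:hessChain}; both reduce to the same bookkeeping of the $({\bf P}-d{\bf H})$ Jacobian and the $O(h^{k_g+1})$ curvature corrections.
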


\begin{lemma} \label{lem:DifferRelationsAgain}
For $\psi, \chi \in H^3(\Gamma)$,
the following integral estimates hold:
\begin{align}
\big((H_\Gamma(\psi))^e, (H_\Gamma(\chi))^e - H_{\Gamma_h}(\chi^e)\big)_{\Gamma_h} & \lesssim h^{k_g + 1}\|\psi\|_{H^3(\Gamma)}\|\chi\|_{H^3(\Gamma)}, \label{F.1} \\
\big((\nab_\Gamma\psi)^e, (\nab_\Gamma\chi)^e - \nab_{\Gamma_h}\chi^e\big)_{\Gamma_h} & \lesssim h^{k_g + 1}\|\psi\|_{H^3(\Gamma)}\|\chi\|_{H^3(\Gamma)}, \label{F.2} \\
{\big((\avg{\bt^\intercal H_{\Gamma}(\psi)\bmu})^e, (\jump{\nab_\Gamma\chi})^e - \jump{\nab_{\Gamma_h}\chi^e}\big)_{\calE_h}} & {\lesssim {h^{2k_g-1}}\|\psi\|_{H^3(\Gamma)}\|\chi\|_{H^3(\Gamma)}. \label{F.3}}
\end{align}
\end{lemma}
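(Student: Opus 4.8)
The plan is to prove the three estimates \eqref{F.1}--\eqref{F.3} by exploiting the additional cancellation that comes from pairing a quantity on $\Gamma$ against a \emph{difference} of operators, rather than bounding each factor separately. The key distinction from Lemma \ref{lem:DifferRelations} is that the estimates \eqref{ApproxOper.1}--\eqref{ApproxOper.4} lose a full power of $h^{k_g}$ because they control the raw difference of operators; here we should gain an extra power by integrating against a fixed smooth lift and using that the geometric error tensors are not merely $O(h^{k_g})$ in $L^\infty$ but \emph{oscillatory}, so that their integral against a smooth function enjoys a superconvergence-type cancellation. Concretely, I would first record, as in the proof of Lemma \ref{lem:DifferRelations}, explicit pointwise representations of the differences $(H_\Gamma(\chi))^e - H_{\Gamma_h}(\chi^e)$ and $(\nab_\Gamma \chi)^e - \nab_{\Gamma_h}\chi^e$ in terms of the geometric discrepancy tensors ${\bf P} - {\bf P}_h$, $\bn - \bn_h$, $d$, and ${\bf H}$, each of which carries a factor that is either $O(h^{k_g})$ (for $\bn-\bn_h$, ${\bf P}-{\bf P}_h$) or $O(h^{k_g+1})$ (for $d$).

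For \eqref{F.1} and \eqref{F.2}, after substituting these representations, the integrand is a product of a smooth factor built from $\psi$ on $\Gamma$ with a geometric-error factor acting on $\chi^e$. The plan is to change variables from $\Gamma_h$ to $\Gamma$ via the closest-point projection (using $\mu_h$ and the estimate \eqref{eqn:muCloseToOne}) so that the integral is posed over $\Gamma$, then integrate by parts to move a derivative off $\chi$ and onto the smooth geometric coefficient. Because the leading geometric error in $\mu_h$, $\bn-\bn_h$, and the tangential projections has a sign/oscillation structure whose surface divergence gains a power of $h$, this integration by parts converts one $O(h^{k_g})$ factor into an effective $O(h^{k_g+1})$ factor; one then applies Cauchy--Schwarz and the $H^3$-regularity of both $\psi$ and $\chi$ to close at the stated rate $h^{k_g+1}\|\psi\|_{H^3(\Gamma)}\|\chi\|_{H^3(\Gamma)}$.

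For the edge estimate \eqref{F.3}, the strategy is parallel but must account for the $e^\ell\to e$ change of variables (using $\mu_e$ and \eqref{eqn:muECloseToOne}) and for the scaling weights hidden in the $h^{-1/2}$ normalization of the jump terms. Here I would use the pointwise jump representation underlying \eqref{ApproxOper.3}, combine it with the co-normal discrepancy estimate \eqref{GeoApprox.3} (which is $O(h^{k_g+1})$, sharper than the raw $O(h^{k_g})$ of \eqref{GeoApprox.4}), and exploit that the averaged Hessian term $\avg{\bt^\intercal H_\Gamma(\psi)\bmu}$ is a fixed smooth quantity. A trace inequality \eqref{tracineq} together with $H^3$ control of $\psi$ and $\chi$ then produces the factor $h^{2k_g-1}$: one power $h^{-1}$ comes from the edge measure/trace scaling, two powers $h^{k_g}$ come from pairing two independent geometric errors (one from the co-normal on the test quantity, one from the jump-operator difference), and the remaining arithmetic is routine.

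The main obstacle I anticipate is organizing the integration-by-parts step in \eqref{F.1}--\eqref{F.2} so that the extra power of $h$ is genuinely extracted rather than lost: the naive bound gives only $h^{k_g}$, and gaining the additional power requires identifying precisely which part of the geometric discrepancy is a total (surface) divergence or otherwise vanishes to higher order after integration against a smooth function. Getting the boundary contributions from this integration by parts to either cancel (by the closed, boundaryless structure of $\Gamma$, since here $S=\Gamma$) or be absorbed is the delicate accounting; the edge case \eqref{F.3} is then comparatively mechanical once the correct sharp co-normal estimate \eqref{GeoApprox.3} is invoked in place of the cruder \eqref{GeoApprox.4}.
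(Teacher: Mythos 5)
Your plan correctly identifies the central difficulty --- the naive bound gives only $h^{k_g}$ and the extra power must come from cancellation when the geometric error is integrated against a fixed smooth quantity --- but the mechanism you propose for extracting it would not work, and the actual mechanisms in the paper are different. For \eqref{F.1}--\eqref{F.2} you propose to change variables to $\Gamma$ and integrate by parts, moving a derivative off $\chi$ onto the geometric coefficient, on the grounds that the ``surface divergence'' of the discrepancy gains a power of $h$. It does not: differentiating $\bn-\bn_h$ produces ${\bf H}-{\bf H}_h$, which is only $O(h^{k_g-1})$, and derivatives of $d$ and ${\bf P}-{\bf P}_h$ are likewise no smaller than the quantities themselves, so this step loses a power rather than gains one. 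The gains in the paper come from three different sources: (a) algebraic orthogonality --- e.g.\ $(E_\Gamma(\bu))^e({\bf P}-{\bf I})=0$ and, for \eqref{F.2}, the exact vanishing of $\big((\nab_\Gamma\psi)^e,\bn\otimes\bn_h(\nab_\Gamma\chi)^e\big)_{\Gamma_h}$, which kills the only genuinely $O(h^{k_g})$ term so that \eqref{F.2} needs no integration by parts at all; (b) quadratic smallness of products such as $(\bn-\bn_h)\otimes(\bn-\bn_h)$ and $1-\bn\cdot\bn_h$, which are $O(h^{2k_g})$; and (c) the elementwise-cancellation integral estimate $|({\bm \chi},{\bf P}_h\bn)_{\calT_h}|\lesssim h^{k_g+1}\|{\bm\chi}\|_{W^1_1(\Gamma_h)}$ (quoted from the remark after Lemma~3.2 of \cite{LarssonLarson17}), which is the precise form of the ``oscillation'' you allude to but is a mean-value argument on each element, not an integration by parts onto the coefficient. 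You are also missing the structural reduction that makes \eqref{F.1} tractable: rewriting $H_\Gamma(\psi)=E_\Gamma(\bcurl_\Gamma\psi)$ and expressing $\bcurl_\Gamma\chi$ in terms of $\bcurl_{\Gamma_h}\chi^e$ through the Piola-transform chain rule \eqref{eqn:curlChain}, which is what produces the four explicitly estimable terms.

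For \eqref{F.3} your accounting does not produce the stated rate. The paper first observes that, since $\chi\in H^3(\Gamma)$ is globally smooth, $(\jump{\nab_\Gamma\chi})^e=0$ and the average on the test side is just a single-valued trace, so there is no ``geometric error from the test quantity'' to pair with; the entire estimate reduces to bounding $\jump{{\bf P}_h({\bf P}-d{\bf H})(\nab_\Gamma\chi)^e}$, whose leading part is controlled by $\big|{\bf P}\bmu_+^\ell+{\bf P}\bmu_-^\ell\big|$. The sharp bound $O(h^{2k_g})$ for this sum comes from an explicit expansion of $\bmu_\pm=\bt_\pm\times\bn_\pm$ using $\bt_+=-\bt_-$, in which the leading terms cancel exactly and the remainders are products of two $O(h^{k_g})$ discrepancies. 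Invoking \eqref{GeoApprox.3} as you propose bounds each $\bmu_{T^\ell_\pm}-{\bf P}\bmu_{h,\pm}$ by $h^{k_g+1}$ and hence the sum only by $O(h^{k_g+1})$, which after the $h^{-1}$ from the trace inequalities yields $h^{k_g}$, strictly weaker than $h^{2k_g-1}$ for every $k_g\ge 2$. So the edge estimate is not ``comparatively mechanical'' from \eqref{GeoApprox.3}; it requires the quadratic cancellation in the sum of lifted co-normals.
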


\begin{lemma} \label{lem:StrongAGeo}
There holds for $\psi,\chi\in W$,
\begin{align}
\big|a_h^\ell(\psi^\ell, \chi^\ell) - a_{h}(\psi, \chi)\big| \lesssim h^{k_g}\tbar{\psi^\ell}_H\tbar{\chi^\ell}_H. \label{6.6}
\end{align}
\end{lemma}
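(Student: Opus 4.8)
The plan is to estimate $a_h^\ell(\psi^\ell,\chi^\ell)-a_h(\psi,\chi)$ term by term, pairing each of the five summands defining $a_h^\ell$ with its discrete counterpart in $a_h$. Throughout I would use that the lift and extension are mutually inverse in the sense that $\psi^\ell|_{\Gamma_h}=\psi$, so that Lemma~\ref{lem:DifferRelations} applies with the $\Gamma$-function $\psi^\ell$ playing the role of its ``$\psi$'': its extension restricted to $\Gamma_h$ is exactly $\psi$, giving $H_{\Gamma_h}((\psi^\ell)^e)=H_{\Gamma_h}(\psi)$, and the right-hand side becomes $\tbar{\psi^\ell}_H$. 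The same remark applies to $\chi$. As a preliminary reduction I would also record that, combining \eqref{ApproxOper.1}--\eqref{ApproxOper.4} with the norm equivalences \eqref{eqn:NormEquivy}, the discrete norm is controlled by the continuous one, $\tbar{\psi}_{H_h}\lesssim\tbar{\psi^\ell}_H$, which is what lets me measure all $\Gamma_h$-based quantities by $\tbar{\psi^\ell}_H$.

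For the two volume terms I would first transform each integral over $\calT_h^\ell$ into one over $\calT_h$ through the closest-point projection, introducing the Jacobian $\mu_h$ via $d\sigma=\mu_h\,d\sigma_h$. For the Hessian term this yields $\int_{T^\ell}H_\Gamma(\psi^\ell):H_\Gamma(\chi^\ell)=\int_T (H_\Gamma(\psi^\ell))^e:(H_\Gamma(\chi^\ell))^e\,\mu_h$, and I would apply the algebraic splitting $AB\mu_h-\tilde A\tilde B=(A-\tilde A)B\,\mu_h+\tilde A(B-\tilde B)\,\mu_h+\tilde A\tilde B(\mu_h-1)$ with $A=(H_\Gamma(\psi^\ell))^e$, $\tilde A=H_{\Gamma_h}(\psi)$, and the analogous choices for $\chi$. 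The first two pieces are controlled by \eqref{ApproxOper.1} (supplying the factor $h^{k_g}$) together with the $L^2(\calT_h)$ bounds $\|(H_\Gamma(\chi^\ell))^e\|_{L^2(\calT_h)}\lesssim\tbar{\chi^\ell}_H$ and $\|H_{\Gamma_h}(\psi)\|_{L^2(\calT_h)}\lesssim\tbar{\psi^\ell}_H$; the last piece uses $|\mu_h-1|\lesssim h^{k_g+1}$ from \eqref{eqn:muCloseToOne} and is in fact a factor $h$ smaller. The gradient term is identical, with \eqref{ApproxOper.2} in place of \eqref{ApproxOper.1}.

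For the edge terms I would proceed in the same way, transforming integrals over $\calE_h^\ell$ to $\calE_h$ via $ds=\mu_e\,ds_h$ and applying the three-way splitting, now with $A=(\avg{\bt^\intercal H_\Gamma(\psi^\ell)\bmu})^e$, $\tilde A=\avg{\bt_h^\intercal H_{\Gamma_h}(\psi)\bmu_h}$, $B=(\jump{\nab_\Gamma\chi^\ell})^e$, and $\tilde B=\jump{\nab_{\Gamma_h}\chi}$. From the definitions of the triple norms I would read off $\|\tilde A\|_{L^2(\calE_h)}\lesssim h^{-1/2}\tbar{\psi^\ell}_H$ and $\|B\|_{L^2(\calE_h)},\|\tilde B\|_{L^2(\calE_h)}\lesssim h^{1/2}\tbar{\chi^\ell}_H$, while \eqref{ApproxOper.4} gives $\|A-\tilde A\|_{L^2(\calE_h)}\lesssim h^{k_g-1/2}\tbar{\psi^\ell}_H$ and \eqref{ApproxOper.3} gives $\|B-\tilde B\|_{L^2(\calE_h)}\lesssim h^{k_g+1/2}\tbar{\chi^\ell}_H$. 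Multiplying, the first piece scales like $h^{k_g-1/2}\cdot h^{1/2}$, the second like $h^{-1/2}\cdot h^{k_g+1/2}$, and the third (with $|\mu_e-1|\lesssim h^{k_g+1}$ from \eqref{eqn:muECloseToOne}) like $h^{-1/2}\cdot h^{1/2}\cdot h^{k_g+1}$, each bounded by $h^{k_g}\tbar{\psi^\ell}_H\tbar{\chi^\ell}_H$. The second symmetry edge term follows by exchanging $\psi$ and $\chi$, and the penalty term is handled likewise using $h_{e^\ell}^{-1}\approx h^{-1}$ and \eqref{ApproxOper.3} for both factors.

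The main obstacle I anticipate is precisely this matching of fractional powers of $h$ on the edges: every geometric-consistency estimate in Lemma~\ref{lem:DifferRelations} carries a built-in $h^{\pm 1/2}$ scaling, and each must be paired with its companion jump or average measured in the correct mesh-weighted norm so that the scalings cancel to leave exactly $h^{k_g}$. A closely related point requiring care is establishing the $L^2(\calE_h)$ and $L^2(\calT_h)$ bounds of the discrete ($\Gamma_h$-based) quantities $\tilde A,\tilde B$ in terms of the \emph{continuous} norm $\tbar{\cdot}_H$, which is where the preliminary reduction $\tbar{\cdot}_{H_h}\lesssim\tbar{\cdot^\ell}_H$ and a second application of \eqref{ApproxOper.1}--\eqref{ApproxOper.4} enter. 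By contrast, the volume contributions are routine once the change of variables and splitting are in place.
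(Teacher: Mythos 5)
Your proposal is correct and follows essentially the same route as the paper: a term-by-term comparison of the five summands, a change of variables between $\calT_h$ and $\calT_h^\ell$ (the paper pulls back to $\Gamma$ rather than pushing forward to $\Gamma_h$, which is immaterial), an add-and-subtract splitting, and then Lemma \ref{lem:DifferRelations} together with \eqref{eqn:muCloseToOne}--\eqref{eqn:muECloseToOne}. Your explicit bookkeeping of the $h^{\pm 1/2}$ edge weights and of the bound $\tbar{\cdot}_{H_h}\lesssim\tbar{\cdot^\ell}_H$ for the discrete factors simply makes precise what the paper's one worked example leaves implicit.
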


\begin{proof}
The bilinear forms $a_h^\ell(\cdot, \cdot)$ and $a_h(\cdot, \cdot)$ each consists of four terms, and the estimate \eqref{6.6} is derived estimating each of them using Lemma \ref{lem:DifferRelations}. For example, using \eqref{ApproxOper.1} and \eqref{eqn:muCloseToOne}, we have
for $\psi,\chi\in W$,
\begin{align*}
& \sum_{T^{{\ell}} \in \calT_h^{{\ell}}}\int_{T^\ell}H_\Gamma(\psi^\ell) : H_\Gamma(\chi^\ell) - \sum_{T \in \calT_h}\int_{T}H_{\Gamma_h}(\psi) : H_{\Gamma_h}(\chi) \\
& \qquad = \sum_{T^{{\ell}} \in \calT_h^{{\ell}}}\int_{T^\ell}\Big(H_\Gamma(\psi^\ell) : H_\Gamma(\chi^\ell) - \mu_h^{- 1}(H_{\Gamma_h}(\psi))^\ell : (H_{\Gamma_h}(\chi))^\ell\Big) \\
& \qquad \le \sum_{T^{{\ell}} \in \calT_h^{{\ell}}}\Big(\|H_\Gamma(\psi^\ell) - \mu_h^{- 1}(H_{\Gamma_h}(\psi))^\ell\|_{L^2(T^\ell)}\|H_\Gamma(\chi^\ell)\|_{L^2(T^\ell)} \\
& \qquad \qquad + \|\mu_h^{- 1}(H_{\Gamma_h}(\psi))^\ell\|_{L^2(T^\ell)}\|(H_{\Gamma_h}(\chi))^\ell - H_\Gamma(\chi^\ell)\|_{L^2(T^\ell)}\Big) \lesssim h^{k_g}\tbar{\psi^\ell}_H\tbar{\chi^\ell}_H.
\end{align*}
The other terms are handled in the exact same way, but using \eqref{ApproxOper.2}--\eqref{ApproxOper.4} and \eqref{eqn:muCloseToOne}--\eqref{eqn:muECloseToOne}.
\end{proof}

\begin{lemma} \label{lem:WeakAGeo}
There holds for $\psi, \chi\in H^3(\Gamma)$,
\begin{align}
\big|a_h^\ell(\psi, \chi) - a_h(\psi^e, \chi^e)\big| \lesssim {\left(h^{k_g + 1}+h^{2k_g-1}\right)}\|\psi\|_{H^3(\Gamma)}\|\chi\|_{H^3(\Gamma)}. \label{F.31}
\end{align}
\end{lemma}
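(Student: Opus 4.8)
The plan is to bound the difference $a_h^\ell(\psi,\chi) - a_h(\psi^e,\chi^e)$ term-by-term, exploiting that both bilinear forms have the same four-term structure (volume Hessian-type term, volume gradient term, two symmetric consistency edge terms, and the penalty edge term). For each pair of corresponding terms I would insert and subtract the ``transported'' quantity so that the discrepancy splits into pieces controlled by Lemma \ref{lem:DifferRelationsAgain}. The key observation is that, unlike Lemma \ref{lem:StrongAGeo} which compares $a_h^\ell$ and $a_h$ applied to lifts of discrete functions and yields only $O(h^{k_g})$, here the arguments are genuinely smooth $H^3(\Gamma)$ functions, so I can pair one factor that is a fixed smooth quantity against a factor that is a \emph{difference} of operators. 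This asymmetric pairing is exactly what Lemma \ref{lem:DifferRelationsAgain} provides, gaining an extra power of $h$ and producing the improved rate $h^{k_g+1}$ (and $h^{2k_g-1}$ from the edge consistency term).

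Concretely, for the first volume term I would write, using the surface-measure factor $\mu_h$ together with \eqref{eqn:muCloseToOne},
\[
(H_\Gamma(\psi), H_\Gamma(\chi))_{\calT_h^\ell} - (H_{\Gamma_h}(\psi^e),H_{\Gamma_h}(\chi^e))_{\calT_h}
= \big((H_\Gamma(\psi))^e,(H_\Gamma(\chi))^e - H_{\Gamma_h}(\chi^e)\big)_{\Gamma_h} + R,
\]
where the remainder $R$ collects the $(1-\mu_h^{-1})$ factor and the symmetric $\psi \leftrightarrow \chi$ term. The leading paired term is estimated by \eqref{F.1}, the analogous gradient term by \eqref{F.2}, and the two consistency edge terms by \eqref{F.3}; the $(1-\mu_h^{-1})$ remainder is $O(h^{k_g+1})$ by \eqref{eqn:muCloseToOne} after using boundedness of the smooth factors. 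The penalty term $\sigma h_e^{-1}(\jump{\nab_\Gamma\psi},\jump{\nab_\Gamma\chi})$ is handled similarly, pairing one jump against the difference of jumps via \eqref{ApproxOper.3}, which after the $h_e^{-1}$ scaling again yields the stated rates.

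The main obstacle I expect is bookkeeping rather than a genuine conceptual difficulty: one must carefully route each of the four term-differences through the correct estimate in Lemma \ref{lem:DifferRelationsAgain}, and in particular verify that the two edge consistency terms really produce the weaker $h^{2k_g-1}$ rate of \eqref{F.3} (arising because both the co-normal/tangent vectors and the jump operator are being perturbed on the edges) rather than $h^{k_g+1}$. I would therefore present the first volume term in full as the representative calculation and then remark that the gradient and penalty terms follow identically using \eqref{F.2} and \eqref{ApproxOper.3}, while the two symmetric consistency terms follow from \eqref{F.3}, collecting the maximum of the exponents $h^{k_g+1}$ and $h^{2k_g-1}$ to obtain \eqref{F.31}. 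A minor point to check is that, since $\psi,\chi\in H^3(\Gamma)$ are smooth across edges, the jump terms $\jump{\nab_\Gamma\psi}$ in $a_h^\ell$ vanish on interior edges for the exact fields; this actually simplifies matters, as several edge contributions on the $\Gamma$ side drop out and the remaining discrepancy is purely geometric.
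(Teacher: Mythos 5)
Your proposal is correct and follows essentially the same route as the paper: a term-by-term comparison using the change of variables with $\mu_h$ and \eqref{eqn:muCloseToOne}, the asymmetric pairings of Lemma \ref{lem:DifferRelationsAgain} for the leading differences (with \eqref{F.3} supplying the $h^{2k_g-1}$ edge contribution), and Lemma \ref{lem:DifferRelations} for the quadratic cross terms, with the first Hessian volume term worked out as the representative case exactly as in the paper. Your added observation that $\jump{\nabla_\Gamma\psi}$ vanishes for $\psi\in H^3(\Gamma)$, so the edge terms of $a_h^\ell$ drop out and the penalty term is controlled at rate $h^{2k_g}$ via \eqref{ApproxOper.3} applied to both factors, is consistent with how the paper handles those terms.
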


\begin{proof}
Similar to the proof of Lemma \ref{lem:StrongAGeo}, the bilinear forms $a_h^\ell(\cdot, \cdot)$ and $a_h(\cdot, \cdot)$ each consists of four terms, and the estimate \eqref{F.31} is obtained by utilizing Lemma \ref{lem:DifferRelationsAgain}. For example,
\begin{align*}
& \int_{\Gamma}H_\Gamma(\psi) : H_\Gamma(\chi) - \int_{\Gamma_h}H_{\Gamma_h}(\psi^e) : H_{\Gamma_h}(\chi^e) \\
& \qquad = \int_{\Gamma_h}(H_\Gamma(\psi))^e : (H_\Gamma(\chi))^e - \int_{\Gamma_h}H_{\Gamma_h}(\psi^e) : H_{\Gamma_h}(\chi^e) + \int_{\Gamma_h}(\mu_h - 1)(H_\Gamma(\psi))^e : (H_\Gamma(\chi))^e \\
%
%
& = \int_{\Gamma_h}\big((H_\Gamma(\psi))^e - H_{\Gamma_h}(\psi^e)\big) : (H_\Gamma(\chi))^e - \int_{\Gamma_h}(H_{\Gamma}(\psi))^e : \big(H_{\Gamma_h}(\chi^e) - (H_\Gamma(\chi))^e\big) \\
& \qquad - \int_{\Gamma_h}\big(H_{\Gamma_h}(\psi^e)-(H_\Gamma(\psi))^e\big) : \big(H_{\Gamma_h}(\chi^e) - (H_\Gamma(\chi))^e\big) + \int_{\Gamma_h}(\mu_h - 1)(H_\Gamma(\psi))^e : (H_\Gamma(\chi))^e.
\end{align*}
We then apply \eqref{eqn:muCloseToOne}, \eqref{F.1}, and \eqref{ApproxOper.1} to obtain
\begin{align*}
& \bigg|\int_{\Gamma}H_\Gamma(\psi) : H_\Gamma(\chi) - \int_{\Gamma_h}H_{\Gamma_h}(\psi^e) : H_{\Gamma_h}(\chi^e)\bigg| \\
& \qquad \lesssim h^{k_g + 1}\|\psi\|_{H^3(\Gamma)}\|\chi\|_{H^3(\Gamma)} + h^{2k_g}\tbar{\psi}_H\tbar{\chi}_H + 
h^{k_g + 1}\|\psi\|_{H^2(\Gamma)}\|\chi\|_{H^2(\Gamma)} \\
& \qquad \lesssim h^{k_g + 1}\|\psi\|_{H^3(\Gamma)}\|\chi\|_{H^3(\Gamma)}.
\end{align*}
The other terms in $\big|a_h^\ell(\psi, \chi) - a_h(\psi^e, \chi^e)\big|$ are handled similarly, but using \eqref{F.2}--\eqref{F.3} and \eqref{ApproxOper.2}--\eqref{ApproxOper.4}.

\end{proof}

\section{Convergence Analysis}\label{sec-converge}
Let $\bar \pi_h:C(\bar \Gamma_h)\to \bar V_h$ denote
the standard Lagrange nodal interpolant onto the space
of piecewise polynomials of degree $\le k$ with respect
to the affine mesh $\bar \calT_h$.
We then set $\pi_h:C(\Gamma_h)\to V_h$ 
to be the corresponding interpolant on the polynomial-mapped mesh,
defined such that 
$\pi_h \psi = \big(\bar \pi_h (\psi\circ \bp_{k_g})\big)\circ \bp_{k_g}^{-1}$.
Likewise, we define $\pi_h^\ell:C(\Gamma)\to V_h^\ell$
to be $\pi_h^\ell \psi = \big( \pi_h (\psi\circ \bp)\big)\circ \bp|_{\Gamma_h}^{-1}$, 
the lifted interpolant on $\Gamma$. This interpolant satisfies (cf.~\cite[(2.23)]{Demlow09})
\begin{align}
|\psi - \pi_{h}^\ell \psi|_{H^{m}(T^{\ell})} \lesssim h^{r - m}\|\psi\|_{H^{r}(T^\ell)}, \qquad  0\le m\le r\label{InterpolErr.2}
\end{align}
for all $\psi\in H^{r}(T^\ell)$ with $2\le r\le k+1$. Consequently, by the trace inequalities in 
Proposition \ref{prop:Trace} and 
the definition of $\tbar{\cdot}_2$ there holds
\begin{align}
\tbar{\psi - \pi_{h}^\ell \psi}_2 \lesssim h^{r - 2}\|\psi\|_{H^{r}(\Gamma)}\qquad \forall \psi\in H^{r}(\Gamma) \label{InterpolErr.1},
\end{align}
and $\tbar{\pi_h^\ell \psi}_H \lesssim \tbar{\pi_h^\ell \psi}_2\lesssim \|\pi_h^\ell \psi\|_2\lesssim \|\psi\|_{H^2(\Gamma)}$.

\begin{theorem}
Let $\phi$ be the exact solution to \eqref{eqn:streamfunction},
and let $\phi_h \in V_{h,0}$ solve the $C^0$ IP method \eqref{eqn:C0IPMethod}. If $\phi\in H^r(\Gamma)$ with $3\le r\le k+1$, then there holds
{
\begin{align}
\tbar{\phi - \phi_h^\ell}_2 \lesssim (h^{r - 2} + h^{k_g})\|\phi\|_{H^{r}(\Gamma)} + 
\|{\bm f}-{\bm F}_h^\ell\|_{L^2(\Gamma)},
\label{7.5}
\end{align}
}
where ${\bm F}_h^\ell$ is given by \eqref{eqn:FhlDef}.
\end{theorem}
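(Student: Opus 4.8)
The plan is to follow the standard Strang/Céa-type argument for interior penalty methods, carried out entirely on the exact surface $\Gamma$ via the lifted bilinear form $a_h^\ell$, and to absorb the geometric inconsistencies using the results of Sections \ref{sec-Geo}. First I would introduce the interpolant and split $\phi - \phi_h^\ell = (\phi - \pi_h^\ell \phi) - \eta$, where $\eta := \phi_h^\ell - \pi_h^\ell \phi \in V_h^\ell$. By the triangle inequality and the interpolation bound \eqref{InterpolErr.1}, $\tbar{\phi - \pi_h^\ell \phi}_2 \lesssim h^{r-2}\|\phi\|_{H^r(\Gamma)}$, so it remains to control $\tbar{\eta}_2$. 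Since $\eta$ belongs to the finite element space, the norm equivalences \eqref{NormCompar.1} allow me to interchange $\|\eta\|_2$, $\tbar{\eta}_2$, and $\tbar{\eta}_H$ freely; this is what makes the geometric estimates (phrased in the $\tbar{\cdot}_H$ norm) compatible with the coercivity estimate on the left.

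Next I would apply coercivity \eqref{eqn:CoerciveOnGam} and expand, $\alpha \|\eta\|_2^2 \le a_h^\ell(\eta,\eta) = a_h^\ell(\phi_h^\ell,\eta) - a_h^\ell(\pi_h^\ell \phi,\eta)$, and treat the two terms separately. Writing $\phi_h,\eta_h \in V_h$ for the functions with lifts $\phi_h^\ell,\eta$, the geometric consistency of the bilinear form (Lemma~\ref{lem:StrongAGeo}) gives $a_h^\ell(\phi_h^\ell,\eta) = a_h(\phi_h,\eta_h) + R$ with $|R|\lesssim h^{k_g}\tbar{\phi_h^\ell}_H\tbar{\eta}_H$. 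Because constants have vanishing surface gradient and Hessian, both sides of the scheme \eqref{eqn:C0IPMethod} annihilate constants, so the discrete equation in fact holds for all test functions in $V_h$, giving $a_h(\phi_h,\eta_h)=\ell_h(\eta_h)$. For the second term I would invoke the consistency identity $a_h^\ell(\phi,\eta)=\ell(\eta)$ established in the derivation of the method (valid since $\phi$ is smooth, so $\jump{\nab_\Gamma \phi}=0$ and the symmetrization and penalty contributions vanish), writing $a_h^\ell(\pi_h^\ell\phi,\eta) = \ell(\eta) - a_h^\ell(\phi-\pi_h^\ell\phi,\eta)$ and bounding the last term by continuity \eqref{eqn:ContEst}.

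Collecting these yields $\alpha\|\eta\|_2^2 \le \big(\ell_h(\eta_h)-\ell(\eta)\big) + a_h^\ell(\phi-\pi_h^\ell\phi,\eta) + R$. The source discrepancy is handled by Lemma~\ref{lem:SourceFormCons}, giving $|\ell(\eta)-\ell_h(\eta_h)| \le \|{\bm f}-{\bm F}_h^\ell\|_{L^2(\Gamma)}\tbar{\eta}_H$, while the interpolation term is $\lesssim (1+\sigma)\tbar{\phi-\pi_h^\ell\phi}_2\,\tbar{\eta}_2 \lesssim h^{r-2}\|\phi\|_{H^r(\Gamma)}\tbar{\eta}_2$. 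Using the norm equivalences to replace every $\tbar{\eta}_H$ and $\tbar{\eta}_2$ by $\|\eta\|_2$, all three contributions so far are of the form (data/interpolation error)$\times\|\eta\|_2$.

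The step I expect to be the main obstacle is the geometric remainder $R$, since it carries the a priori uncontrolled quantity $\tbar{\phi_h^\ell}_H$. I would tame it by writing $\phi_h^\ell = \pi_h^\ell\phi - \eta$ and using the interpolation stability $\|\pi_h^\ell\phi\|_2 \lesssim \|\phi\|_{H^2(\Gamma)}$ together with \eqref{NormCompar.1}, so that $\tbar{\phi_h^\ell}_H \lesssim \|\phi\|_{H^2(\Gamma)} + \|\eta\|_2$. This splits $R$ into a term $\lesssim h^{k_g}\|\phi\|_{H^2(\Gamma)}\|\eta\|_2$ and a term $\lesssim h^{k_g}\|\eta\|_2^2$; the latter is absorbed into $\alpha\|\eta\|_2^2$ on the left once $h$ is sufficiently small. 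Dividing through by $\|\eta\|_2$ then gives $\|\eta\|_2 \lesssim (h^{r-2}+h^{k_g})\|\phi\|_{H^r(\Gamma)} + \|{\bm f}-{\bm F}_h^\ell\|_{L^2(\Gamma)}$, and combining with the triangle inequality of the first paragraph yields the claimed estimate \eqref{7.5}.
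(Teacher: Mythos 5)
Your proposal is correct and is in essence the same Strang-type argument the paper uses: coercivity of $a_h^\ell$, continuity \eqref{eqn:ContEst}, the interpolation bound \eqref{InterpolErr.1}, and the two geometric consistency lemmas (Lemmas \ref{lem:SourceFormCons} and \ref{lem:StrongAGeo}) are exactly the ingredients the paper combines, and your observation that the discrete equation extends from $V_{h,0}$ to all of $V_h$ because both sides annihilate constants is a legitimate (and usually unstated) point.

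The one substantive difference is where the bilinear-form geometric error is anchored. The paper invokes the first Strang lemma in the form where the perturbation term is $\sup_\chi |a_h^\ell(\psi^\ell,\chi^\ell)-a_h(\psi,\chi)|/\tbar{\chi^\ell}_2$ evaluated at the \emph{trial} function $\psi=\pi_h\phi^{e}$; since $\tbar{\pi_h^\ell\phi}_H\lesssim\|\phi\|_{H^2(\Gamma)}$ is controlled a priori, no absorption is needed and no smallness condition on $h$ arises. (Underneath, this corresponds to using the coercivity of $a_h$ on $\Gamma_h$ so that $a_h(\phi_h,\cdot)$ is replaced by $\ell_h(\cdot)$ before any lifting error is incurred.) Your decomposition instead converts $a_h^\ell(\phi_h^\ell,\eta)$ into $a_h(\phi_h,\eta_h)$, which places the factor $\tbar{\phi_h^\ell}_H$ in the remainder; you then have to bootstrap via $\tbar{\phi_h^\ell}_H\lesssim\|\phi\|_{H^2(\Gamma)}+\|\eta\|_2$ and absorb $Ch^{k_g}\|\eta\|_2^2$ into the left-hand side, which requires $h$ sufficiently small. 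This is a mild extra hypothesis not present in the paper's statement, but the argument is otherwise sound and yields the same estimate \eqref{7.5}.
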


\begin{proof}
Using the coercivity and continuity of $a_h^\ell(\cdot, \cdot)$ (cf.~Lemma \ref{lem:ContandCoer}) and Strang's lemma, we obtain
\[
\tbar{\phi - \phi_{h}^\ell}_2 \lesssim \inf_{\psi \in V_h}\Bigg(\tbar{\phi - \psi^\ell}_2 + \sup_{\chi \in V_h}\frac{\big|a_h^\ell(\psi^\ell, \chi^\ell) - a_h(\psi, \chi)\big|}{\tbar{\chi^\ell}_2}\Bigg) + \sup_{\chi \in V_h}\frac{\big|\ell(\chi^\ell) - \ell_h(\chi)\big|}{\tbar{\chi^\ell}_2}.
\]
Taking $\psi = \pi_h\phi^{{e}}$ in the infimum and applying \eqref{6.6}, \eqref{NormCompar.1}, \eqref{6.5}, \eqref{InterpolErr.1} and the stability bound $\|\pi_h^\ell\phi\|_2 \lesssim \|\phi\|_{H^2(\Gamma)}$ yields
\begin{align*}
\tbar{\phi - \phi_h^\ell}_2 & \lesssim \Bigg(\tbar{\phi - \pi_h^\ell\phi}_2 + \sup_{\chi \in V_h}\frac{\big|a_h^\ell(\pi_h^\ell\phi, \chi^\ell) - a_h(\pi_h\phi^{{e}}, \chi)\big|}{\tbar{\chi^\ell}_2}\Bigg) + \sup_{\chi \in V_h}\frac{\big|\ell(\chi^\ell) - \ell_h(\chi)\big|}{\tbar{\chi^\ell}_2} \\
& \lesssim \tbar{\phi - \pi_h^\ell\phi}_2 + h^{k_g}\tbar{\pi_h^\ell\phi}_H 
+\|{\bm f}-{\bm F}_h^\ell\|_{L^2(\Gamma)}\\
& \lesssim h^{r - 2}\|\phi\|_{H^r(\Gamma)} + h^{k_g}\|\phi\|_{H^2(\Gamma)} 
+\|{\bm f}-{\bm F}_h^\ell\|_{L^2(\Gamma)}.
\end{align*}
\end{proof}

Next, we derive error estimates in lower-order norms
using a duality argument. In order to do so,
we require the following elliptic regularity result.
Its proof is given in Appendix \ref{app-Regularity}.

\begin{lemma}\label{thm:RegBihar}
Let $m\in \{0,1\}$ and $\chi\in H^{-m}(\Gamma)$.
Let $u\in H^2(\Gamma)$ with $\int_\Gamma u=0$ satisfy
\begin{align*}
\frac12 \Delta_\Gamma^2 u + {\rm div}_\Gamma ((K-1)\nab_\Gamma u) & = \chi\qquad \text{on }\Gamma.
\end{align*}
Then there holds $u\in H^{4-m}(\Gamma)$ with $\|u\|_{H^{4-m}(\Gamma)}\lesssim \|\chi\|_{H^{-m}(\Gamma)}$.
\end{lemma}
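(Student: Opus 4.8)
The plan is to establish the elliptic regularity estimate $\|u\|_{H^{4-m}(\Gamma)}\lesssim \|\chi\|_{H^{-m}(\Gamma)}$ for the fourth-order surface operator $\mathcal{L}u := \frac12\Delta_\Gamma^2 u + {\rm div}_\Gamma((K-1)\nab_\Gamma u)$. My first step is to recast the PDE in its weak/variational form. Using the integration-by-parts identity \eqref{eqn:StreamForm2} on $S=\Gamma$ (where the boundary terms vanish since $\Gamma$ has no boundary), the problem is equivalent to the variational problem: find $u\in H^2(\Gamma)\cap L^2_0(\Gamma)$ such that
\[
B(u,v):=\int_\Gamma \left(H_\Gamma(u):H_\Gamma(v)+\nab_\Gamma u\cdot\nab_\Gamma v\right)=\langle \chi,v\rangle\qquad\forall v\in H^2(\Gamma)\cap L^2_0(\Gamma).
\]
The coercivity of $B$ on $H^2(\Gamma)\cap L^2_0(\Gamma)$ (which underlies well-posedness, and parallels Lemma \ref{lem:ContandCoer}) guarantees existence and the a priori bound $\|u\|_{H^2(\Gamma)}\lesssim \|\chi\|_{H^{-m}(\Gamma)}$ for both $m=0,1$, since $H^{-1}$ and $H^0=L^2$ dualities both control the right-hand side. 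This handles the base regularity; the content of the lemma is the \emph{gain} of $2-m$ derivatives.

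The core of the argument is a bootstrap/shift estimate. I would write the leading operator as $\frac12\Delta_\Gamma^2 u = \chi - {\rm div}_\Gamma((K-1)\nab_\Gamma u)$ and treat the right-hand side as a known forcing term of appropriate regularity: since $u\in H^2(\Gamma)$ and $K$ is smooth (as $\Gamma$ is smooth), the term ${\rm div}_\Gamma((K-1)\nab_\Gamma u)$ lies in $H^{-m}(\Gamma)$ with norm controlled by $\|u\|_{H^{2-m}(\Gamma)}\lesssim\|u\|_{H^2(\Gamma)}$. I would then invoke the standard elliptic regularity theory for the surface biharmonic operator $\Delta_\Gamma^2$ on the closed smooth manifold $\Gamma$: this operator is elliptic of order four with smooth coefficients, so by the classical theory of elliptic operators on compact manifolds (e.g., as in the references on surface PDE such as \cite{Delfour00,DziukElliott13}), a solution with forcing in $H^{-m}(\Gamma)$ and vanishing mean lies in $H^{4-m}(\Gamma)$, with the corresponding norm bound. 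Combining gives
\[
\|u\|_{H^{4-m}(\Gamma)}\lesssim \|\chi\|_{H^{-m}(\Gamma)}+\|u\|_{H^{2-m}(\Gamma)}\lesssim \|\chi\|_{H^{-m}(\Gamma)},
\]
where the last step absorbs the lower-order contribution using the a priori $H^2$ bound established above.

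I expect the main obstacle to be justifying the shift estimate for $\Delta_\Gamma^2$ cleanly at the endpoint $m=1$, where the forcing $\chi\in H^{-1}(\Gamma)$ only gives $u\in H^3(\Gamma)$: here one must be careful that the lower-order perturbation ${\rm div}_\Gamma((K-1)\nab_\Gamma u)$ genuinely sits in $H^{-1}(\Gamma)$ rather than requiring additional regularity of $u$ that is not yet available, so the argument is not a naive one-shot application but relies on the perturbation being strictly lower-order relative to the principal part. A secondary technical point is handling the kernel: the operator $\mathcal{L}$ has constants in its kernel (consistent with the $\int_\Gamma u=0$ constraint and the Fredholm alternative), so the regularity theory must be applied on the quotient by constants, which is standard once coercivity on $L^2_0(\Gamma)$ is in hand. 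Since the detailed verification of these elliptic estimates is routine manifold analysis, I would relegate the full computation to the appendix as indicated, citing the standard Agmon--Douglis--Nirenberg framework adapted to closed surfaces.
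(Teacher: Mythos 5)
Your argument is correct in outline, but it takes a genuinely different route from the paper. The paper's proof (Appendix D) exploits the stream-function structure: it first constructs a curl potential $\br\in \bH^{1-m}(\Gamma)$ with ${\rm curl}_\Gamma \br = -\chi$ (by solving a scalar Laplace--Beltrami problem), observes that $\bv=\bcurl_\Gamma u$ then solves a surface Stokes system with data $\br$, and invokes elliptic regularity for that Stokes system to get $\bv\in \bH^{3-m}$, hence $u\in H^{4-m}$. The Gauss-curvature term never has to be treated as a perturbation because it is exactly the term that disappears when passing from the stream-function equation back to the velocity--pressure form. Your route instead treats $\frac12\Delta_\Gamma^2$ as the principal part and ${\rm div}_\Gamma((K-1)\nab_\Gamma u)$ as a lower-order forcing in $L^2\subset H^{-m}$, and applies a shift estimate for the surface biharmonic operator; that shift estimate does follow by applying the Laplace--Beltrami regularity of \cite{DziukElliott13} twice to $w=\Delta_\Gamma u$ (noting the compatibility $\langle g,1\rangle=0$ is automatic), so the bootstrap closes in one step for both $m=0,1$, as you note. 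The price of your approach is the a priori bound $\|u\|_{H^2(\Gamma)}\lesssim\|\chi\|_{H^{-m}(\Gamma)}$, which is genuinely needed to absorb the perturbation and which in your formulation rests on coercivity of $B(u,v)=\int_\Gamma(H_\Gamma(u):H_\Gamma(v)+\nab_\Gamma u\cdot\nab_\Gamma v)$ on $H^2(\Gamma)\cap L^2_0(\Gamma)$; this requires the \emph{continuous} surface Korn inequality for the divergence-free field $\bcurl_\Gamma u$ (available in \cite{JankuhnEtal18}, and the continuous analogue of what the paper proves only in discrete form in Proposition \ref{prop:NormEquivalence}), so it should be cited explicitly rather than waved at via ``parallels Lemma \ref{lem:ContandCoer}.'' With that ingredient made precise, your proof is complete; what it buys is that only scalar Laplace--Beltrami regularity is needed, whereas the paper's proof is shorter but leans on regularity of the surface Stokes system.
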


{\begin{theorem} \label{Thm-7.2}
Let $\phi$ be the exact solution to \eqref{eqn:streamfunction} and let $\phi_h \in V_{h, 0}$ solve the $C^0$ IP method \eqref{eqn:C0IPMethod}. If $\phi\in H^r(\Gamma)$ with $3\le r\le k+1$, then there holds
for $m\in \{0,1\}$,
\begin{align*}
\big\|\nab_\Gamma^m(\phi - \phi_h^\ell)\big\|_{L^2(\Gamma)} 
&\lesssim \left(h^{r-\underline{m}}+h^{k_g+2-\underline{m}}+h^{k_g+r-2} +h^{2k_g-1}\right) \|\phi\|_{H^r(\Gamma)}
+ \|{\bm f}-{\bm F}_h^\ell\|_{L^2(\Gamma)},
\end{align*}
where $\underline{m}=m$ for $k\ge 3$,
and $\underline{m}=1$ for $k=2$.
In particular, if $\phi\in H^{k+1}(\Gamma)$ there holds
\begin{align*}
\|\nab_\Gamma^m(\phi-\phi_h^\ell)\|_{L^2(\Gamma)}
 &\lesssim \left\{
 \begin{array}{ll}
 \left(h^{2} + h^{k_g+1}+h^{2k_g-1}\right)\|\phi\|_{H^3(\Gamma)}
  + \|{\bm f} - {\bm F}_h^\ell\|_{L^2(\Gamma)}& \text{if }k=2,\bigskip\\
 \left(h^{k+1-m}+h^{k_g+2-m}
 +h^{2k_g-1}\right)\|\phi\|_{H^{k+1}(\Gamma)}
  +\|{\bm f} - {\bm F}_h^\ell\|_{L^2(\Gamma)}& \text{if }k\ge 3.
 \end{array}
 \right.
 \end{align*}

%
\end{theorem}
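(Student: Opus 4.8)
The plan is to run an Aubin--Nitsche duality argument resting on the elliptic regularity estimate of Lemma \ref{thm:RegBihar}, the symmetry of $a_h^\ell(\cdot,\cdot)$, and a \emph{perturbed} Galerkin orthogonality that keeps track of the geometric inconsistencies. Write $e = \phi - \phi_h^\ell$ and let $\tilde e = e - c_\Gamma$ be its mean-zero part, where $c_\Gamma = |\Gamma|^{-1}\int_\Gamma e$; by the measure-comparison \eqref{eqn:muCloseToOne} and $\int_{\Gamma_h}\phi_h = 0$ one checks $|c_\Gamma| \lesssim h^{k_g+1}\|\phi_h\|_{L^2(\Gamma_h)}$, so replacing $e$ by $\tilde e$ costs only a higher-order term (and for $m=1$ costs nothing, since $\nab_\Gamma^m$ annihilates constants). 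For $m\in\{0,1\}$ I introduce the dual solution $z\in H^2(\Gamma)$ with $\int_\Gamma z=0$ solving $\tfrac12\Delta_\Gamma^2 z + {\rm div}_\Gamma((K-1)\nab_\Gamma z) = \chi_m$, where $\chi_0 = \tilde e$ and $\chi_1 = -\Delta_\Gamma \tilde e\in H^{-1}(\Gamma)$ are chosen so that $\langle \chi_m, e\rangle = \|\nab_\Gamma^m \tilde e\|_{L^2(\Gamma)}^2$. Since the operator is formally self-adjoint, Lemma \ref{thm:RegBihar} yields $z\in H^{4-m}(\Gamma)$ with $\|z\|_{H^{4-m}(\Gamma)}\lesssim \|\chi_m\|_{H^{-m}(\Gamma)}\lesssim \|\nab_\Gamma^m\tilde e\|_{L^2(\Gamma)}$.

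Because $z$ is globally $C^1$ (so $\jump{\nab_\Gamma z}=0$ and the symmetry and penalty terms drop), the integration-by-parts identity underlying \eqref{eqn:StreamForm2}, applied element-wise with right-hand side $\chi_m$ and summed over $\calT_h^\ell$, gives $a_h^\ell(z,v) = \langle \chi_m, v\rangle$ for every continuous $v$. Taking $v=e$ and using symmetry gives $\|\nab_\Gamma^m\tilde e\|^2 = a_h^\ell(e,z)$, which I split as $a_h^\ell(e,z) = a_h^\ell(e, z-\pi_h^\ell z) + a_h^\ell(e,\pi_h^\ell z)$. The first summand is bounded by continuity \eqref{eqn:ContEst}, the interpolation estimate \eqref{InterpolErr.1} for $z\in H^{4-m}$ (whose rate is capped by the polynomial degree $k+1$, which is exactly what produces the exponent $\underline m$, giving $\tbar{z-\pi_h^\ell z}_2\lesssim h^{2-\underline m}\|z\|_{H^{4-m}}$), and the energy estimate \eqref{7.5} for $\tbar{e}_2$. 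This already produces the $h^{r-\underline m} + h^{k_g+2-\underline m}$ contributions, plus a harmless $h^{2-\underline m}\|{\bm f}-{\bm F}_h^\ell\|_{L^2(\Gamma)}$.

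For the second summand, set $\psi_z := \pi_h z^e\in V_h$ (so its lift is $\pi_h^\ell z$). Since $\phi$ solves the continuous problem, $a_h^\ell(\phi,\pi_h^\ell z) = \ell(\pi_h^\ell z)$, and since $\phi_h$ solves \eqref{eqn:C0IPMethod}, $a_h(\phi_h,\psi_z) = \ell_h(\psi_z)$; subtracting gives the perturbed orthogonality
\[
a_h^\ell(e,\pi_h^\ell z) = \big(\ell(\pi_h^\ell z)-\ell_h(\psi_z)\big) - \big(a_h^\ell(\phi_h^\ell,\pi_h^\ell z) - a_h(\phi_h,\psi_z)\big).
\]
The source mismatch is controlled by Lemma \ref{lem:SourceFormCons} together with $\tbar{\pi_h^\ell z}_H\lesssim \|z\|_{H^2(\Gamma)}\lesssim\|\nab_\Gamma^m\tilde e\|$, giving the $\|{\bm f}-{\bm F}_h^\ell\|_{L^2(\Gamma)}$ term. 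For the geometric bilinear mismatch $G:=a_h^\ell(\phi_h^\ell,\pi_h^\ell z) - a_h(\phi_h,\psi_z)$ the crude bound of Lemma \ref{lem:StrongAGeo} alone only yields $h^{k_g}$, which is too weak; the key idea is to insert the smooth surrogates by writing $\phi_h = \phi^e - E_h$ and $\psi_z = z^e - R_z$ (with $E_h^\ell = e$ and $R_z^\ell = z-\pi_h^\ell z$) and expanding $G$ bilinearly into four pieces. The smooth--smooth piece $G(\phi^e,z^e) = a_h^\ell(\phi,z)-a_h(\phi^e,z^e)$ is estimated by the sharp Lemma \ref{lem:WeakAGeo} at rate $h^{k_g+1}+h^{2k_g-1}$ (using $\|\phi\|_{H^3}\lesssim\|\phi\|_{H^r}$ and $\|z\|_{H^3}\lesssim\|\nab_\Gamma^m\tilde e\|$), while the three remaining pieces each carry one of the small factors $\tbar{e}_H$ (bounded via \eqref{7.5} and \eqref{NormCompar.1}) or $\tbar{z-\pi_h^\ell z}_H\lesssim h^{2-\underline m}\|z\|_{H^{4-m}}$, and are handled by Lemma \ref{lem:StrongAGeo}; these produce the $h^{k_g+r-2}$, $h^{k_g+2-\underline m}$, and $h^{2k_g}$ contributions. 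Collecting everything, dividing by $\|\nab_\Gamma^m\tilde e\|$, and finally specializing $r=k+1$ yields the two displayed estimates.

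The hard part will be the geometric bilinear mismatch $G$: the naive consistency bound loses a full power of $h$ relative to what the lower-order estimate requires, and the remedy---splitting the discrete arguments into their exact-solution and dual-solution surrogates plus remainders, so that the sharp estimate of Lemma \ref{lem:WeakAGeo} is applied \emph{only} to the smooth--smooth interaction while Lemma \ref{lem:StrongAGeo} absorbs the remainder interactions against their extra $h$-powers---demands careful bookkeeping of the competing exponents $h^{k_g+1}, h^{k_g+2-\underline m}, h^{k_g+r-2}, h^{2k_g-1}$ and of the dual regularity $z\in H^{4-m}$ against the interpolation cap $k+1$ (the source of $\underline m$, and of the $k=2$ special case). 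A secondary technical point is justifying $a_h^\ell(e,z) = \|\nab_\Gamma^m\tilde e\|^2$ when $m=1$, where $\Delta_\Gamma^2 z$ must be read in the $H^{-1}$--$H^1$ duality and one must verify that the nonconforming edge terms drop because $z$ is globally $C^1$.
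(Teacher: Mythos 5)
Your proposal is correct and follows essentially the same route as the paper: the same duality argument with a dual solution in $H^{4-m}(\Gamma)$ (capped by the interpolation degree, which is where $\underline{m}$ arises), the same splitting into an interpolation term, a source-mismatch term bounded by Lemma \ref{lem:SourceFormCons}, and a geometric bilinear mismatch that is expanded so that the sharp Lemma \ref{lem:WeakAGeo} hits only the smooth--smooth interaction while Lemma \ref{lem:StrongAGeo} absorbs the remainder interactions against their extra powers of $h$. Your explicit mean-zero correction $\tilde e$ addresses the compatibility condition of the dual problem for $m=0$, a point the paper's proof passes over silently; it is a minor refinement rather than a different argument.
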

}
\begin{proof}
Let $u \in H^2(\Gamma)$ with $\int_\Gamma u = 0$ satisfy
\[
\frac12 \Delta_\Gamma^2 u + {\rm div}_{\Gamma}((K - 1)\nab_\Gamma u) = (-\Delta_\Gamma)^m (\phi - \phi_h^\ell)\in H^{-m}(\Gamma) \quad \text{on } \Gamma\quad (m=0,1),
\]
so that by Lemma \ref{thm:RegBihar}, $u\in H^{4-m}(\Gamma)$ with
\begin{equation} \label{eqn:EllipticReg}
\|u\|_{H^{4-m}(\Gamma)} \lesssim \|(\Delta_\Gamma)^m(\phi - \phi_h^\ell)\|_{H^{-m}(\Gamma)}\lesssim \|\nab_\Gamma^m (\phi-\phi_h^\ell)\|_{L^2(\Gamma)}.
\end{equation}
Note that, by the consistency of the scheme,
\[
a_h^\ell(\psi, u) = \big\langle\psi, (-\Delta_\Gamma)^m (\phi - \phi_h^\ell)\big\rangle_\Gamma \qquad \forall \psi \in W,
\]
and therefore
\begin{align}
\begin{split} \label{F.36}
& \|\nab_\Gamma^m(\phi - \phi_h^\ell)\|_{L^2(\Gamma)}^2\\  
&\qquad = a_h^\ell\big(\phi - \phi_h^\ell, u\big) \\
&\qquad = a_h^\ell\big(\phi - \phi_h^\ell, u - \pi_h^\ell u\big) + a_h^\ell\big(\phi - \phi_h^\ell, \pi_h^\ell u\big) \\
&\qquad = a_h^\ell\big(\phi - \phi_h^\ell, u - \pi_h^\ell u\big) + \big[\ell(\pi_h^\ell u) - \ell_h(\pi_h u^{{e}})\big] + \big[a_h(\phi_h, \pi_h u^{{e}}) - a_h^\ell(\phi_h^\ell, \pi_h^\ell u)\big] \\
&\qquad =: I_1 + I_2 + I_3.
\end{split}
\end{align}
Let us estimate each term in \eqref{F.36} separately. First, 
note that
\begin{align}
\tbar{u - \pi_h^\ell u}_2 \lesssim h^{2-\underline{m}}\|u\|_{H^{4-m}(\Gamma)} \lesssim h^{2-\underline{m}} \|\nab_\Gamma^m(\phi - \phi_h^\ell)\|_{L^2(\Gamma)}. \label{E.3}
\end{align}
 Then, by the continuity estimate of $a_h^\ell(\cdot, \cdot)$ in Lemma \ref{lem:ContandCoer} and \eqref{7.5}, there holds
 \begin{align}
 \begin{split}
 I_1 
 & = a_h^\ell\big(\phi - \phi_h^\ell, u - \pi_h^\ell u\big) \\
 & \lesssim \tbar{\phi - \phi_h^\ell}_2\tbar{u - \pi_h^\ell u}_2 \\
 & \lesssim \Big((h^{r -\underline{m}} + h^{k_g+2-\underline{m}})\|\phi\|_{H^{r}(\Gamma)} 
 + h^{2-\underline{m}}\|{\bm f} - {\bm F}_h^\ell\|_{L^2(\Gamma)}\Big) 
 \|\nab_\Gamma^m(\phi-\phi_h^\ell)\|_{L^2(\Gamma)}.
 \end{split} \label{F.38}
 \end{align}
 Next, due to \eqref{6.5}, the stability bound $\tbar{\pi_h^\ell u}_H \lesssim \|u\|_{H^2(\Gamma)}$ and \eqref{eqn:EllipticReg},
 \begin{align}
 \begin{split}
 I_2 
 & = \ell(\pi_h^\ell u) - \ell_h(\pi_h u^{{e}}) 
  \lesssim \|{\bm f}- {\bm F}_h^\ell \|_{L^2(\Gamma)}
 \tbar{\pi_h^\ell u}_H 
  \lesssim \|{\bm f} - {\bm F}_h^\ell\|_{L^2(\Gamma)}
 \|\nab_\Gamma^m(\phi - \phi_h^\ell)\|_{L^2(\Gamma)}.
 \end{split} \label{F.40}
 \end{align}
 Now, we consider the following in \eqref{F.36},
 \begin{align}
 \begin{split}
 I_3 & = a_h(\phi_h, \pi_h u^{{e}}) - a_h^\ell(\phi_h^\ell, \pi_h^\ell u) \\
 & = \big[a_h\big(\phi_h, \pi_h u^{{e}} - u^e\big) - a_h^\ell\big(\phi_h^\ell, \pi_h^\ell u - u\big)\big] + \big[a_h\big(\phi_h - \phi^e, u^e\big) - a_h^\ell\big(\phi_h^\ell - \phi, u\big)\big] \\
  & \qquad + \big[a_h(\phi^e, u^e) - a_h^\ell(\phi, u)\big] \\
 & =: I_{3,1} + I_{3,2} + I_{3,3}.
 \end{split} \label{F.41}
 \end{align}

 \textit{Estimate $I_{3,1}$}: Applying \eqref{NormCompar.1}, \eqref{InterpolErr.1} and \eqref{7.5} yields 
 \begin{align}
 \begin{split}
  \tbar{\phi - \phi_h^\ell}_H
 & \leq \tbar{\phi - \pi_h^\ell\phi}_H + \tbar{\pi_h^\ell\phi - \phi_h^\ell}_H \\
 & \lesssim h^{r - 2}\|\phi\|_{H^{r}(\Gamma)} + \tbar{\pi_h^\ell\phi - \phi}_2 + \tbar{\phi - \phi_h^\ell}_2 \\
 & \lesssim (h^{r - 2} + h^{k_g})\|\phi\|_{H^{r}(\Gamma)} 
 + \|{\bm f} - {\bm F}_h^\ell\|_{L^2(\Gamma)}.
 \end{split} \label{F.42}
 \end{align}
 Therefore, $\tbar{\phi_h^\ell}_H\lesssim \|\phi\|_{H^r(\Gamma)}+\|{\bm f}-{\bm F}_h^\ell\|_{L^2(\Gamma)}$, and so by \eqref{6.6}, \eqref{NormCompar.1} and \eqref{E.3},
 \begin{align}
 \begin{split}
 I_{3,1} & = a_h\big(\phi_h, \pi_h u^{{e}} - u^e\big) - a_h^\ell\big(\phi_h^\ell, \pi_h^\ell u - u\big) \\
 & \lesssim h^{k_g}\tbar{\phi_h^\ell}_H\tbar{\pi_h^\ell u - u}_H \\
 &\lesssim h^{k_g+2-\underline{m}} \left(\|\phi\|_{H^r(\Gamma)}
 +\|{\bm f}-{\bm F}_h^\ell\|_{L^2(\Gamma)}\right) \|\nab_\Gamma^m(\phi-\phi_h^\ell)\|_{L^2(\Gamma)}.
 \end{split} \label{F.43}
 \end{align}

 \textit{Estimate $I_{3,2}$}: Using \eqref{6.6}, \eqref{F.42}, \eqref{NormCompar.1}, and \eqref{eqn:EllipticReg}, 
 \begin{align}
 \begin{split}
 I_{3,2} 
 & = a_h\big(\phi_h - \phi^e, u^e\big) - a_h^\ell\big(\phi_h^\ell - \phi, u\big) \\
 & \lesssim h^{k_g}\tbar{\phi_h^\ell - \phi}_H\tbar{u}_H \\
 & \lesssim h^{k_g}\Big((h^{r - 2} + h^{k_g})\|\phi\|_{H^{r}(\Gamma)} 
 + \|{\bm f} - {\bm F}_h^\ell\|_{L^2(\Gamma)}\Big)\|\nab_\Gamma^m(\phi - \phi_h^\ell)\|_{L^2(\Gamma)}.
 \end{split} \label{F.44}
 \end{align}

\textit{Estimate $I_{3,3}$}: By \eqref{F.31} and \eqref{eqn:EllipticReg}, 
{ \begin{align}
 \begin{split}
 I_{3,3} 
 & = a_h(\phi^e, u^e) - a_h^\ell(\phi, u) \\
  & \lesssim (h^{2k_g -1}+h^{k_g+1})\|\phi\|_{H^3(\Gamma)}\|u\|_{H^3(\Gamma)} \\
&\lesssim (h^{2k_g - 1}+h^{k_g+1})\|\phi\|_{H^r(\Gamma)}\|\nab^m_\Gamma(\phi-\phi_h^\ell)\|_{L^2(\Gamma)}.
 \end{split} \label{F.45}
 \end{align}
 }

 Combining \eqref{F.41}, \eqref{F.43}, \eqref{F.44} and \eqref{F.45} yields
\begin{align}
 \begin{split}
 I_3
 &\lesssim \Bigg( 
\left(h^{k_g+2-\underline{m}} + h^{r-2+k_g}+h^{2k_g}
+h^{2k_g-1}+h^{k_g+1}\right)
\|\phi\|_{H^r(\Gamma)}\\
 &\qquad + \left( h^{k_g+2-\underline{m}} + h^{k_g}\right) \|{\bm f}-{\bm F}_h^\ell\|_{L^2(\Gamma)}\Bigg)\|\nab_\Gamma^m(\phi-\phi_h^\ell)\|_{L^2(\Gamma)}\\
  &\lesssim 
h^{k_g}\left( \left(h^{2-\underline{m}} + h^{r-2}
 +h^{k_g-1}\right)
 \|\phi\|_{H^r(\Gamma)}
 +  \|{\bm f}-{\bm F}_h^\ell\|_{L^2(\Gamma)}\right)
    \|\nab_\Gamma^m(\phi-\phi_h^\ell)\|_{L^2(\Gamma)}.
 \end{split} \label{E.11}
 \end{align}

 It follows from \eqref{F.36}, \eqref{F.38}, \eqref{F.40} and \eqref{E.11} that
\begin{align*}
\|\nab_\Gamma^m(\phi-\phi_h^\ell)\|_{L^2(\Gamma)}
&\lesssim \left(h^{r-\underline{m}}+h^{k_g+2-\underline{m}}+h^{k_g+2-\underline{m}}+h^{k_g+r-2} +h^{2k_g-1}\right)\|\phi\|_{H^r(\Gamma)}\\
%
&\qquad+ \left(h^{2-\underline{m}}+1+h^{k_g}\right)\|{\bm f}-{\bm F}_h^\ell\|_{L^2(\Gamma)}\\
&\lesssim \left(h^{r-\underline{m}}+h^{k_g+2-\underline{m}}+h^{k_g+r-2} +h^{2k_g-1}\right) \|\phi\|_{H^r(\Gamma)}
+ \|{\bm f}-{\bm F}_h^\ell\|_{L^2(\Gamma)}
 \end{align*}

\end{proof}

\begin{corollary} \label{Cor-6.4}
Let $\phi$ be the exact solution to \eqref{eqn:streamfunction} and let $\phi_h \in V_{h, 0}$ solve the $C^0$ IP method \eqref{eqn:C0IPMethod}. 
Define $\bu := \bcurl_\Gamma\phi$ and $\bu_h := \bcurl_{\Gamma_h}\phi_h$. If $\phi \in H^{k + 1}(\Gamma)$, then 
\[
\|\bu - \calP_\bp\bu_h\|_{L^2(\Gamma)} \lesssim (h^k + h^{k_g + 1} + h^{2k_g - 1})\|\phi\|_{H^{k + 1}(\Gamma)} + \|{\bm f} - {\bm F}_h^\ell\|_{L^2(\Gamma)},
\]
where 
\[
\calP_{\bp}\bu_h \circ \bp = \mu_h^{-1} ({\bf P}-d {\bf H})\bu_h
\]
is the Piola transform of $\bu_h$ with respect to the closest point projection $\bp$.
\end{corollary}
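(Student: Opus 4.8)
The plan is to show that the Piola transform exactly intertwines the discrete and continuous surface curl operators, so that the velocity error collapses onto the $H^1$ stream-function error already controlled by Theorem \ref{Thm-7.2}. First I would record the pullback chain rule for the lift: since $\phi_h = \phi_h^\ell\circ \bp$ on $\Gamma_h$ and $D\bp = {\bf P}-d{\bf H}$ is symmetric, the surface chain rule (cf.~Lemma \ref{lem:ChainRuleFun}) gives
\[
\nab_{\Gamma_h}\phi_h = {\bf P}_h({\bf P}-d{\bf H})\,(\nab_\Gamma \phi_h^\ell)\circ \bp \qquad \text{on }\Gamma_h.
\]
Writing $\bu_h = \bcurl_{\Gamma_h}\phi_h = \bn_h\times \nab_{\Gamma_h}\phi_h$ and using the elementary identity $\bn_h\times {\bf P}_h{\bm v}=\bn_h\times {\bm v}$, this becomes $\bu_h = \bn_h\times \big(({\bf P}-d{\bf H}){\bm g}\big)$, where I abbreviate ${\bm g} := (\nab_\Gamma \phi_h^\ell)\circ \bp$, which is tangent to $\Gamma$ so that $\bn\cdot {\bm g}=0$.

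Next I would establish the exact commuting identity $\calP_\bp \bu_h = \bcurl_\Gamma \phi_h^\ell$ on $\Gamma$; equivalently, $\mu_h^{-1}({\bf P}-d{\bf H})\bu_h = \bn\times {\bm g}$ pointwise. The cleanest route is to work in the principal frame $\{{\bm e}_1,{\bm e}_2,\bn\}$ of $\Gamma$, in which $A:={\bf P}-d{\bf H} = \mathrm{diag}(1-d\kappa_1,\,1-d\kappa_2,\,0)$; its cofactor matrix is $\mathrm{cof}(A) = (1-d\kappa_1)(1-d\kappa_2)\,\bn\otimes \bn$, which is the structural reason only the normal component survives. Expanding $A\big(\bn_h\times A{\bm g}\big)$ directly in this frame, and using $\bn\cdot{\bm g}=0$, collapses the expression to
\[
A\big(\bn_h\times A{\bm g}\big) = (\bn\cdot \bn_h)(1-d\kappa_1)(1-d\kappa_2)\,(\bn\times {\bm g}).
\]
Since $\mu_h = (\bn\cdot \bn_h)(1-d\kappa_1)(1-d\kappa_2)$, division by $\mu_h$ yields exactly $\bn\times {\bm g} = \bcurl_\Gamma\phi_h^\ell$. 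This is precisely where the specific form of the Piola transform, namely the factor $\mu_h^{-1}({\bf P}-d{\bf H})$, is essential, and I expect this frame computation to be the main technical obstacle.

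With the commuting identity in hand the remainder is immediate. Since $\bu = \bcurl_\Gamma\phi$, one has
\[
\bu - \calP_\bp \bu_h = \bcurl_\Gamma\phi - \bcurl_\Gamma \phi_h^\ell = \bcurl_\Gamma(\phi-\phi_h^\ell),
\]
and the pointwise relation $|\bcurl_\Gamma \eta|^2 = |\bn\times \nab_\Gamma \eta|^2 = |\nab_\Gamma\eta|^2$ (valid because $\nab_\Gamma\eta\perp \bn$) gives $\|\bu - \calP_\bp\bu_h\|_{L^2(\Gamma)} = \|\nab_\Gamma(\phi-\phi_h^\ell)\|_{L^2(\Gamma)}$. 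Applying Theorem \ref{Thm-7.2} with $m=1$ and $r=k+1$, and noting that the term $h^{k_g+r-2}=h^{k_g+k-1}\lesssim h^{k_g+1}$ for $k\ge 2$, produces the stated bound $(h^k+h^{k_g+1}+h^{2k_g-1})\|\phi\|_{H^{k+1}(\Gamma)} + \|{\bm f}-{\bm F}_h^\ell\|_{L^2(\Gamma)}$.
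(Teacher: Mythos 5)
Your proposal is correct and follows essentially the same route as the paper: establish the exact intertwining $\calP_\bp\bu_h=\bcurl_\Gamma\phi_h^\ell$, use $|\bcurl_\Gamma\eta|=|\nab_\Gamma\eta|$, and invoke Theorem \ref{Thm-7.2} with $m=1$, $r=k+1$. The only difference is that the paper obtains the commuting identity by directly citing the second identity in \eqref{eqn:curlChain} (Lemma \ref{lem:ChainRuleFun}, proved in the appendix via manipulations with $\bn_h^\times\bn^\times$ and the factorization $1-d\,{\rm tr}({\bf H})+d^2K=(1-d\kappa_1)(1-d\kappa_2)$), whereas you re-derive it with a principal-frame computation; your computation checks out ($A(\bn_h\times A{\bm g})=(\bn\cdot\bn_h)(1-d\kappa_1)(1-d\kappa_2)\,\bn\times{\bm g}$ with $\mu_h=(\bn\cdot\bn_h)\prod_i(1-d\kappa_i)$) and is a valid, somewhat more elementary substitute for that lemma.
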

\begin{proof}
Using \eqref{eqn:curlChain} below, we can conclude $\calP_{\bp}\bu_h = \calP_{\bp} (\bcurl_{\Gamma_h} \phi_h) = \bcurl_{\Gamma} \phi_h^\ell$.
Therefore by applying the result from the particular case $m = 1$ in Theorem \ref{Thm-7.2} yields
\begin{align*}
\|\bu - \calP_\bp\bu_h\|_{L^2(\Gamma)}
& = \|\bcurl_\Gamma\phi - \bcurl_\Gamma\phi_h^\ell\|_{L^2(\Gamma)} \\
& = \|\bn \times \nab_\Gamma(\phi - \phi_h^\ell)\|_{L^2(\Gamma)} \\
& \lesssim (h^k + h^{k_g + 1} + h^{2k_g - 1})\|\phi\|_{H^{k + 1}(\Gamma)} + \|{\bm f} - {\bm F}_h^\ell\|_{L^2(\Gamma)}.
\end{align*}
\end{proof}

\section{Numerical Experiments}\label{sec-numerics}

In the numerical experiments, 
we consider an ellipsoid $\Gamma = \{x \in \mathbb{R}^3: \Psi(x) = 0:\ \Psi(x) = x_1^2 + x_2^2 / 2 + x_3^2 / 2 - 1\}$,
and take the stream function solution as $\phi(x) = e^{x_1}(\cos(x_2) + x_3)$
and the pressure solution as $p(x) = x_1x_2x_3$. 
We determine the velocity $\bu = {\bf curl}_{\Gamma}\phi$ and the force ${\bm f}$ on the 
right hand side of \eqref{eqn:momentum} through  Mathematica.
These expressions for $\bu$ and ${\bm f}$ are well-defined
on $\bbR^3$, and we respectively take $\bu^{ce}$ and ${\bm f}^{ce}$ to be these
expressions defined on $\Gamma_h$. In all of the numerical experiments, we take ${\bm f}_h = {\bm f}^{ce}$,
and note that (cf.~Lemma \ref{lem:SourceFormCons}
and Remark \ref{rem:SourceBounds})
\begin{equation}\label{eqn:SourceRate}
\|{\bm f}-{\bm F}_h^\ell\|_{L^2(\Gamma)}\lesssim h^{k_g+1}.
\end{equation}

Using NGSolve \cite{ngsolve}, we solve the $C^0$ IP method \eqref{eqn:C0IPMethod} for various values of $k$ and $k_g$ while varying $h$,
and also compute the approximate velocity function $\bu_h = \bcurl_{\Gamma_h} \phi_h$.
We compute the errors on $\Gamma_h$, and therefore we need to map both the exact stream function $\phi$ and velocity $\bu$ 
to the discrete surface $\Gamma_h$. Ideally, this would be accomplished via the
closest point project and the Piola transform with respect to the inverse of the closest point projection, respectively:
\[
\phi^e = \phi \circ \bp,\qquad
\mathcal{P}_{\bp^{- 1}}\bu = \mu_h\bigg({\bf I} - \frac{\bn \otimes \bn_h}{\bn \cdot \bn_h}\bigg)({\bf I} - d{\bf H})^{- 1}(\bu \circ \bp).
\]
Unfortunately, an explicit formula for the distance function of the ellipsoid is unknown,
and therefore its closest point projection
is unavailable. Instead, based on \cite[Section 4.2]{DemlowDziuk07}, we make the approximations
\[
\tilde \bn(x) = \frac{\nab \Psi(x)}{|\nab \Psi(x)|},\quad \tilde d(x) = \frac{\Psi(x)}{|\nab \Psi(x)|},\quad \tilde{\bp}(x) = x - \tilde d(x) \tilde \bn(x),\qquad x\in \Gamma_h.
\]
We then have $|d(x)-\tilde d(x)| = O(d) = O(h^{k_g+1})$, and
\begin{align*}
|\bn(x) - \tilde \bn(x)| = \left| \frac{\nab \Psi(\bp(x))}{|\nab \Psi(\bp(x))|} - \frac{\nab \Psi(x)}{|\nab \Psi(x)|}\right|
\lesssim |\bp(x)-x| = O(h^{k_g+1}).
\end{align*}
Therefore $|\bp(x)-\tilde \bp(x)| = O(h^{k_g+1})$, although $\tilde \bp(x)$ does not necessarily lie on $\Gamma$.

Similar to above, we let
\[
\phi^{ce}(x) = e^{x_1} (\cos(x_2)+x_3),\quad x\in \Gamma_h,
\]
be the canonical extension of $\phi$
and set
\[
\tilde \phi^{e}  = \phi^{ce} \circ \tilde \bp,
\]
as the approximate extension to $\phi$.
These extensions satisfy $|\phi^e(x) - \tilde \phi^e(x)|\lesssim h^{k_g+1}$
and  $|\phi^e(x)- \phi^{ce}(x)|\lesssim h^{k_g+1}$.
Therefore, Theorem \ref{Thm-7.2} with $m=0$ and \eqref{eqn:SourceRate} yield 
\begin{align}\label{eqn:PhiExpect}
\|\tilde \phi^{e}-\phi_h\|_{L^2(\Gamma_h)}\lesssim 
\left\{
\begin{array}{ll}
h^2+h^{k_g+1} + h^{2k_g-1} & k=2\\
h^{k+1}+h^{k_g+1} + h^{2k_g-1} & k\ge 3
\end{array},
\right.
\end{align}
and the same orders of convergence hold for $\| \phi^{ce}-\phi_h\|_{L^2(\Gamma_h)}$.

We map the exact velocity $\bu$ to $\Gamma_h$ using
an approximate Piola transform
\[
\tilde{\calP}_{\bp^{- 1}}\bu := \left({\bf I} - \frac{(\tilde \bn \circ \tilde{\bp}) \otimes \bn_h}{(\tilde \bn \circ \tilde{\bp}) \cdot \bn_h}\right)(\bu^{ce} \circ \tilde{\bp}).
\]
Note that $|\tilde{\calP}_{\bp^{- 1}}\bu - \calP_{\bp^{- 1}}\bu| = O(d) = O(h^{k_g + 1})$,
and so $\|\tilde{\calP}_{\bp^{- 1}}\bu - \bu_{h}\|_{L^2(\Gamma_h)}$ 
has the same order of convergence as stated in Corollary \ref{Cor-6.4}:
\begin{equation}\label{eqn:UExpect}
\|\tilde \calP_{\bp^{-1}} \bu - \bu_h\|_{L^2(\Gamma)}\lesssim h^k+h^{k_g+1}+h^{2k_g-1}.
\end{equation}
However, $|\bu^{ce}-\calP_{\bp^{-1}} \bu| =O(h^{k_g})$ on $\Gamma_h$,
and therefore Corollary \ref{Cor-6.4} only yields
\begin{equation}\label{eqn:UEExpect}
\|\bu^{ce} - \bu_h\|_{L^2(\Gamma)}\lesssim h^k+h^{k_g}.
\end{equation}

The errors for $\|\tilde \phi^e - \phi_{h}\|_{L^{2}(\Gamma_{h})}$
and $\|\tilde \calP_{\bp^{-1}} \bu - \bu_h\|_{L^2(\Gamma_{h})}$
are provided in Table \ref{tab:Numerics}.
For the velocity error, we see rates of convergence
that are in agreement with \eqref{eqn:UExpect},
except in the case $k_g=1$, where we observe
second-order convergence, instead of first order.
This suggests that the term $h^{2k_g-1}$ in \eqref{eqn:UExpect}
leads to an estimate that is not sharp.

For the stream function error, we see rates of convergence of order $O(h^{k+1})$
for $k\ge 3$ in the isoparametric setting $k_g=k$, which is in agreement
with the theoretical estimate \eqref{eqn:PhiExpect}. 
However, we also observe the optimal order $O(h^{4})$
in the case $k=3$ and $k_g=2$,
and the  order $O(h^4)$ for $k=4$ and $k_g=2$.
Similar to the velocity error, we also observe second-order convergence
in the case $k_g=1$.
The rates of convergence for $k=4$ and $k_g\in \{3,4\}$
are unclear, as we observe a degradation of the error,
possibly due to round-off error and the poor conditioning
of the system.

Next, we list the errors 
$\|\phi^{ce} - \phi_{h}\|_{L^{2}(\Gamma_{h})}$
and $\|\bu^{ce} - \bu_h\|_{L^2(\Gamma_{2{h}})}$
in Table \ref{tab:Numerics2}.
Here, we see exact agreement 
with the expected rates of convergence 
of the velocity given in \eqref{eqn:UEExpect}.
Likewise, the stream function error convergence
rates conform to the theoretical estimate \eqref{eqn:PhiExpect},
except in the case $k_g=1$, where we observe second-order convergence.

\begin{table}[h]
\centering
\renewcommand\thetable{1}
\caption{\label{tab:Numerics} Convergence 
rates of the computed stream function $\phi_h$ and velocity $\bu_h$.
We list the theoretical rates of convergence 
\eqref{eqn:PhiExpect} and \eqref{eqn:UExpect} in blue.}
\begin{tabular}{c c c c c c c}
\hline
$k$ & $k_g$ & $h$ & $\| \tilde \phi^{e} - \phi_{h}\|_{L^{2}(\Gamma_{h})}$ & Rate & $\|\tilde{\calP}_{\bp^{- 1}}\bu - \bu_{h}\|_{L^2(\Gamma_h)}$ & Rate \\
\hline
\multirow{8}{*}{2} & \multirow{4}{*}{1} & 0.2 & 0.05767 & \textcolor{blue}{(1)} & 0.07858 & \textcolor{blue}{(1)} \\
 &  & 0.1 & 0.01424 & 2.01775 & 0.02016 & 1.96237 \\
 &  & 0.05 & 0.00376 & 1.91772 & 0.00531 & 1.92421 \\
 &  & 0.025 & 0.00093 & 2.00579 & 0.00131 & 2.01714 \\
\cline{2-7}
 & \multirow{4}{*}{2} & 0.2 & 0.01297 & \textcolor{blue}{(2)} & 0.03828 & \textcolor{blue}{(2)} \\
 &  & 0.1 & 0.00353 & 1.87457 & 0.01015 & 1.91519 \\
 &  & 0.05 & 0.00097 & 1.85711 & 0.00279 & 1.86280 \\
 &  & 0.025 & 0.00022 & 2.10957 & 0.00066 & 2.07626 \\
\hline
\multirow{12}{*}{3} & \multirow{4}{*}{1} & 0.2 & 0.08097 & \textcolor{blue}{(1)} & 0.08958 & \textcolor{blue}{(1)} \\
 &  & 0.1 & 0.02028 & 1.99731 & 0.02264 & 1.98393 \\
 &  & 0.05 & 0.00547 & 1.88983 & 0.00611 & 1.88769 \\
 &  & 0.025 & 0.00135 & 2.01742 & 0.00151 & 2.01603 \\
\cline{2-7}
 & \multirow{4}{*}{2} & 0.2 & 8.47941e-05 & \textcolor{blue}{(3)} & 0.00094 & \textcolor{blue}{(3)} \\
 &  & 0.1 & 6.31325e-06 & 3.74751 & 0.00012 & 2.90593 \\
 &  & 0.05 & 3.63872e-07 & 4.11687 & 1.49323e-05 & 3.07424 \\
 &  & 0.025 & 2.70954e-08 & 3.74731 & 1.84384e-06 & 3.01764 \\
\cline{2-7}
 & \multirow{4}{*}{3} & 0.2 & 0.00039 & \textcolor{blue}{(4)} & 0.00290 & \textcolor{blue}{(3)} \\
 &  & 0.1 & 2.88117e-05 & 3.78754 & 0.00037 & 2.94548 \\
 &  & 0.05 & 2.04527e-06 & 3.81629 & 4.91400e-05 & 2.94155 \\
 &  & 0.025 & 1.27143e-07 & 4.00776 & 6.14542e-06 & 2.99931 \\
\hline
\multirow{16}{*}{4} & \multirow{4}{*}{1} & 0.2 & 0.07802 & \textcolor{blue}{(1)} & 0.08622 & \textcolor{blue}{(1)} \\
 &  & 0.1 & 0.01955 & 1.99660 & 0.02179 & 1.98379 \\
 &  & 0.05 & 0.00526 & 1.89306 & 0.00587 & 1.89232 \\
 &  & 0.025 & 0.00130 & 2.01383 & 0.00145 & 2.01115 \\
\cline{2-7}
 & \multirow{4}{*}{2} & 0.2 & 6.40078e-05 & \textcolor{blue}{(3)} & 0.00015 & \textcolor{blue}{(3)} \\
 &  & 0.1 & 5.45770e-06 & 3.55188 & 1.73855e-05 & 3.19389 \\
 &  & 0.05 & 3.08108e-07 & 4.14678 & 2.13295e-06 & 3.02696 \\
 &  & 0.025 & 1.78093e-08 & 4.11273 & 2.41951e-07 & 3.14006 \\
\cline{2-7}
 & \multirow{4}{*}{3} & 0.2 & 7.94347e-06 & \textcolor{blue}{(4)} & 0.00014 & \textcolor{blue}{(4)} \\
 &  & 0.1 & 5.70919e-07 & 3.79841 & 1.13606e-05 & 3.68062 \\
 &  & 0.05 & 3.04503e-08 & 4.22875 & 6.80985e-07 & 4.06027 \\
 &  & 0.025 & 5.03957e-09 & 2.59508 & 4.27816e-08 & 3.99255 \\
\cline{2-7}
 & \multirow{4}{*}{4} & 0.2 & 4.12926e-06 & \textcolor{blue}{(5)} & 0.00011 & \textcolor{blue}{(4)} \\
 &  & 0.1 & 1.64135e-07 & 4.65292 & 8.43260e-06 & 3.73177 \\
 &  & 0.05 & 4.95913e-09 & 5.04865 & 5.12438e-07 & 4.04052 \\
 &  & 0.025 & 1.19892e-08 & {-1.27358} & 3.45115e-08 & 3.89222 \\
\hline
\end{tabular}
\end{table}

\begin{table}[h]
\centering
\renewcommand\thetable{2}
\caption{\label{tab:Numerics2} Convergence for the stream function $\phi_h$ and the velocity $\bu_h$.
We list the theoretical rates of convergence 
\eqref{eqn:PhiExpect} and \eqref{eqn:UEExpect} in blue.}
\begin{tabular}{c c c c c c c}
\hline
$k$ & $k_g$ & $h$ & $\|\phi^{ce} - \phi_{h}\|_{L^{2}(\Gamma_{h})}$ & Rate & $\|\bu^{ce} - \bu_{h}\|_{L^2(\Gamma_h)}$ & Rate \\
\hline
\multirow{8}{*}{2} & \multirow{4}{*}{1} & 0.2 & 0.03561 & \textcolor{blue}{(1)} & 0.24536 & \textcolor{blue}{(1)} \\
 &  & 0.1 & 0.00851 & 2.06441 & 0.12290 & 0.99739 \\
 &  & 0.05 & 0.00231 & 1.88026 & 0.06283 & 0.96780 \\
 &  & 0.025 & 0.00057 & 2.00569 & 0.03075 & 1.03081 \\
\cline{2-7}
 & \multirow{4}{*}{2} & 0.2 & 0.01295 & \textcolor{blue}{(2)} & 0.03845 & \textcolor{blue}{(2)} \\
 &  & 0.1 & 0.00353 & 1.87321 & 0.01019 & 1.91520 \\
 &  & 0.05 & 0.00097 & 1.85674 & 0.00280 & 1.86293 \\
 &  & 0.025 & 0.00022 & 2.10950 & 0.00066 & 2.07623 \\
\hline
\multirow{12}{*}{3} & \multirow{4}{*}{1} & 0.2 & 0.06046 & \textcolor{blue}{(1)} & 0.24972 & \textcolor{blue}{(1)} \\
 &  & 0.1 & 0.01490 & 2.02080 & 0.12343 & 1.01652 \\
 &  & 0.05 & 0.00411 & 1.85740 & 0.06292 & 0.97214 \\
 &  & 0.025 & 0.00101 & 2.02480 & 0.03076 & 1.03225 \\
\cline{2-7}
 & \multirow{4}{*}{2} & 0.2 & 0.00014 & \textcolor{blue}{(3)} & 0.00406 & \textcolor{blue}{(2)} \\
 &  & 0.1 & 1.80827e-05 & 3.04799 & 0.00098 & 2.04314 \\
 &  & 0.05 & 2.29863e-06 & 2.97576 & 0.00026 & 1.91730 \\
 &  & 0.025 & 2.77898e-07 & 3.04814 & 6.16448e-05 & 2.08103 \\
\cline{2-7}
 & \multirow{4}{*}{3} & 0.2 & 0.00040 & \textcolor{blue}{(4)} & 0.00310 & \textcolor{blue}{(3)} \\
 &  & 0.1 & 2.90516e-05 & 3.78777 & 0.00040 & 2.93645 \\
 &  & 0.05 & 2.06015e-06 & 3.81779 & 5.32761e-05 & 2.92893 \\
 &  & 0.025 & 1.28190e-07 & 4.00638 & 6.62167e-06 & 3.00822 \\
\hline
\multirow{16}{*}{4} & \multirow{4}{*}{1} & 0.2 & 0.05748 & \textcolor{blue}{(1)} & 0.24858 & \textcolor{blue}{(1)} \\
 &  & 0.1 & 0.01416 & 2.02110 & 0.12329 & 1.01163 \\
 &  & 0.05 & 0.00390 & 1.86000 & 0.06290 & 0.97093 \\
 &  & 0.025 & 0.00096 & 2.02019 & 0.03076 & 1.03186 \\
\cline{2-7}
 & \multirow{4}{*}{2} & 0.2 & 0.00015 & \textcolor{blue}{(3)} & 0.00395 & \textcolor{blue}{(2)} \\
 &  & 0.1 & 1.85841e-05 & 3.04004 & 0.00097 & 2.01542 \\
 &  & 0.05 & 2.30884e-06 & 3.00883 & 0.00026 & 1.90815 \\
 &  & 0.025 & 2.77587e-07 & 3.5615 & 6.16184e-05 & 2.07937 \\
\cline{2-7}
 & \multirow{4}{*}{3} & 0.2 & 1.87417e-05 & \textcolor{blue}{(4)} & 0.00109 & \textcolor{blue}{(3)} \\
 &  & 0.1 & 1.43958e-06 & 3.70253 & 0.00014 & 2.88171 \\
 &  & 0.05 & 8.56107e-08 & 4.07171 & 2.05907e-05 & 2.85533 \\
 &  & 0.025 & 6.32045e-09 & 3.75969 & 2.46611e-06 & 3.06168 \\
\cline{2-7}
 & \multirow{4}{*}{4} & 0.2 & 4.21651e-06 & \textcolor{blue}{(5)} & 0.00011 & \textcolor{blue}{(4)} \\
 &  & 0.1 & 1.67527e-07 & 4.65358 & 8.62432e-06 & 3.73354 \\
 &  & 0.05 & 5.10129e-09 & 5.03738 & 5.29696e-07 & 4.02517 \\
 &  & 0.025 & 1.19894e-08 & {-1.23282} & 3.52220e-08 & 3.91061 \\
\hline
\end{tabular}
\end{table}

\bibliographystyle{siam}
\bibliography{ref}

\appendix

\section{Proof of Lemma \ref{lem:HessForm}}\label{app:ProofHessForm}
We start with applying \eqref{eqn:NotSymmetric} to obtain the identity
\[
\underline{D}_i \underline{D}_j^2 \phi = \underline{D}_j \underline{D}_i \underline{D}_j
\phi+ n_i ({\bf H} \nab_\Gamma \underline{D}_j \phi)_j -n_j ({\bf H}\nab_\Gamma \underline{D}_j\phi)_i\quad i,j=1,2,3.
\]
Furthermore, \eqref{eqn:NotSymmetric} and $\underline{D}_{{\Gamma}}^2 \phi \bn = 0$ also yields
$\bn^\intercal \underline{D}_{{\Gamma}}^2 \phi = -\nab_\Gamma \phi^\intercal {\bf H}$.
Using these two identities and Lemma \ref{lem:IBP} then get
 \begin{align*}
\int_S (\Delta_\Gamma^2 \phi) \psi 
&= -\sum_{i,j=1}^3\int_S \big(\underline{D}_j \underline{D}_i \underline{D}_j
 \phi+ n_i ({\bf H} \nab_\Gamma \underline{D}_j\phi)_j -n_j ({\bf H}\nab_\Gamma \underline{D}_j\phi)_i\big) \underline{D}_i \psi
+\int_{\p S} (\nab_\Gamma \Delta_\Gamma \phi\cdot \bmu_S)\psi\\
  & = \sum_{i,j=1}^3 \left( \int_S (\underline{D}_i \underline{D}_j \phi)(\underline{D}_j \underline{D}_i \psi)
  -\int_S (\underline{D}_i \underline{D}_j \phi) \underline{D}_i \psi {\rm tr}({\bf H})n_j 
  -\int_{\p S} (\underline{D}_i \underline{D}_j \phi) \underline{D}_i \psi(\mu_S)_j \right)\\
 &\qquad +\sum_{i,j=1}^3 \int_S n_j ({\bf H}\nab_\Gamma \underline{D}_j\phi)_i \underline{D}_i \psi+\int_{\p S} (\nab_\Gamma \Delta_\Gamma \phi\cdot \bmu_S)\psi\\
 & = \int_S \hessone_{{\Gamma}} \phi:\hessone_{{\Gamma}} \psi^\intercal
 +\int_S \nab_\Gamma \phi^\intercal ( {\rm tr}({\bf H}) {\bf H}-{\bf H}^2) \nab_\Gamma \psi
  \\
 &\qquad  +\int_{\p S} (\nab_\Gamma \Delta_\Gamma \phi\cdot \bmu_S)\psi
 -\int_{\p S} \bmu^\intercal_S \underline{D}_{{\Gamma}}^2 \phi \nab_\Gamma \psi.
 \end{align*}
Continuing, we use \eqref{eqn:NotSymmetric} and the identity 
$\bn^\intercal \underline{D}_{{\Gamma}}^2 \phi = -\nab_\Gamma \phi^\intercal {\bf H}$
once again to obtain
\begin{align*}
        \int_S (\Delta_\Gamma^2 \phi) \psi 
&= \int_S ( \hessone_{{\Gamma}} \phi: \big( \hessone_{{\Gamma}} \psi
+ \bn\otimes ({\bf H} \nab_\Gamma \psi)- ({\bf H}\nab_\Gamma \psi) \otimes \bn\big)\\
&\qquad
+\int_S \nab_\Gamma \phi^\intercal ( {\rm tr}({\bf H}) {\bf H}-{\bf H}^2) \nab_\Gamma \psi
+\int_{\p S} (\nab_\Gamma \Delta_\Gamma \phi\cdot \bmu_S)\psi-\int_{\p S} \bmu^\intercal_S \underline{D}_{{\Gamma}}^2 \phi \nab_\Gamma \psi \\
&= \int_S  \hessone_{{\Gamma}} \phi:  \hessone_{{\Gamma}} \psi
+ \int_S \bn^\intercal \hessone_{{\Gamma}} \phi {\bf H} \nab_\Gamma \psi\\
&\qquad
+\int_S \nab_\Gamma \phi^\intercal ( {\rm tr}({\bf H}) {\bf H}-{\bf H}^2) \nab_\Gamma \psi
+\int_{\p S} (\nab_\Gamma \Delta_\Gamma \phi\cdot \bmu_S)\psi-\int_{\p S} \bmu^\intercal_S \underline{D}_{{\Gamma}}^2 \phi \nab_\Gamma \psi \\
&= \int_S  \hessone_{{\Gamma}} \phi:  \hessone_{{\Gamma}} \psi
+\int_S \nab_\Gamma \phi^\intercal ( {\rm tr}({\bf H}) {\bf H}-2{\bf H}^2) \nab_\Gamma \psi\\
&\qquad+\int_{\p S} (\nab_\Gamma \Delta_\Gamma \phi\cdot \bmu_S)\psi-\int_{\p S} \bmu^\intercal_S \underline{D}_{{\Gamma}}^2 \phi \nab_\Gamma \psi.
\end{align*}
Finally we calculate
\begin{align*}
\hessone_{{\Gamma}} \phi:\hessone_{{\Gamma}} \psi
- \hesstwo \phi:\hesstwo \psi
& = \hessone_{{\Gamma}} \phi:\hessone_{{\Gamma}} \psi - ({\bf P}\hessone_{{\Gamma}} \phi):({\bf P}\hessone_{{\Gamma}} \psi)\\
%
%
& = (({\bf I}-{\bf P})\hessone_{{\Gamma}} \phi):\underline{D}_{{\Gamma}}^2 \psi\\
& = (\bn^\intercal \underline{D}_{{\Gamma}}^2 \phi)\cdot (\bn^\intercal \underline{D}_{{\Gamma}}^2 \psi) = \nab_\Gamma \phi^\intercal {\bf H}^2 \nab_\Gamma \psi.
\end{align*}
and use the identity ${\rm tr}({\bf H}){\bf H} - {\bf H}^2 = K{\bf P}$ \cite{JankuhnEtal18}, to obtain the desired result.

\section{Proof of Proposition \ref{prop:NormEquivalence}}\label{app:NormE}

The proof of \eqref{ineq.1} relies on a discrete Korn-type
inequality for surface BDM spaces, which we now
define. 

Recall $\bar \calT_h$ is the set of faces of the polyhedral approximation
to $\Gamma$, and $\calT_h = \{\bp_{k_g}(\bar T):\ \bar T\in \bar \calT_h\}$.
Let $\hat T\subset \bbR^2$ be the reference simplex
with vertices $(0,0),(1,0),(0,1)$, and for $\bar T\in \bar \calT_h$,
let $F_{\bar T}:\hat T\to \bar T$ be an affine diffeomorphism.
We then set $F_T = \bp_{k_g}\circ F_{\bar T}:\hat T\to T\in \calT_h$
where $T = \bp_{k_g}(\bar T)$.  We also set $F_{T^\ell} = \bp \circ F_{T}:\hat T\to T^\ell \in \calT_h^\ell$
where $T^\ell = \bp(T)$.

For an integer $k\ge 1$, let $\bbP_k(\hat T)$ denote the space of polynomials
of degree $\le k$ on the reference triangle.
The BDM spaces defined on 
the polynomial-mapped surface mesh and the exact surface mesh are given by, respectively,
\begin{align}\label{eqn:SigmaIsoDef}
\Sigma_h &= \{\bq\in \bH({\rm div}_{\Gamma_h};\Gamma_h):\ \bq|_T = \calP_{F_T} \hat \bq,\ \exists \hat \bq \in [\bbP_{k-1}(\hat T)]^2\ \forall T\in \calT_h\},\\
\Sigma_h^\ell & = 
\{\bq\in \bH({\rm div}_{\Gamma};\Gamma):\ \bq|_T = \calP_{F^\ell_T} \hat \bq,\ \exists \hat \bq \in [\bbP_{k-1}(\hat T)]^2\ \forall T\in \calT_h\},
\end{align}
where
\[
\calP_{F_T} \hat \bq = \frac{DF_T}{\sqrt{\det(DF_{T}^\intercal DF_{T})}} \hat \bq,  \quad \calP_{F_{T^\ell}} \hat \bq = \frac{DF_{T^\ell}}{\sqrt{\det(DF_{T^\ell}^\intercal DF_{T^\ell})}} \hat \bq
\]
are the Piola transform of $\hat \bq$ with respect to $F_T$ and $F_{T^\ell}$, respectively.


\subsection{Relationships between the surface Langrage and BDM spaces}

We write $x=F_T(\hat x)$ with $x\in T$ and $\hat x\in \hat T$, 
and to ease the presentation, we set 
\[
A = A(\hat x) = DF_T(\hat x).
\]

\begin{lemma}\label{lem:CurlBDM}
    There holds ${\bf curl}_{\Gamma_h} \psi\in \Sigma_h$ for all $\psi\in V_h$
    and ${\bf curl}_{\Gamma} \psi\in \Sigma^\ell_h$ for all $\psi\in V_h^\ell$.
\end{lemma}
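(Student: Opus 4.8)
The plan is to prove the statement elementwise via a Piola-transform identity for the surface curl, and then to verify the global $\bH({\rm div}_{\Gamma_h};\Gamma_h)$-conformity using only the continuity of $\psi$. I will carry out the argument for $\bcurl_{\Gamma_h}\psi$ with $\psi\in V_h$; the claim for $\bcurl_\Gamma\psi$ with $\psi\in V_h^\ell$ follows verbatim after replacing $F_T$, $A=DF_T$, and $\bn_h$ by $F_{T^\ell}$, $DF_{T^\ell}$, and $\bn$, and $\Sigma_h$ by $\Sigma_h^\ell$.

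First I would set $\hat\psi = \psi\circ F_T$ and observe that, since $\psi\circ\bp_{k_g}\in\bbP_k(\bar T)$ and $F_{\bar T}$ is affine, $\hat\psi\in\bbP_k(\hat T)$ and hence $\hat\nabla\hat\psi\in[\bbP_{k-1}(\hat T)]^2$. The chain rule gives $\hat\nabla\hat\psi = A^\intercal\nab_{\Gamma_h}\psi$, the normal component being annihilated because the columns $\bm a_1,\bm a_2$ of $A$ are tangent to $\Gamma_h$; since $\nab_{\Gamma_h}\psi$ lies in the column space of $A$ we may invert to get $\nab_{\Gamma_h}\psi = A(A^\intercal A)^{-1}\hat\nabla\hat\psi$. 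The key computation is to expand $\bcurl_{\Gamma_h}\psi = \bn_h\times\nab_{\Gamma_h}\psi$ using $\bn_h = (\bm a_1\times\bm a_2)/|\bm a_1\times\bm a_2|$ together with the vector triple product. A short calculation yields the matrix identity
\[
\bn_h\times(A\bv) = \frac{1}{\sqrt{\det(A^\intercal A)}}\,A\,J\,(A^\intercal A)\,\bv,\qquad J = \begin{bmatrix} 0 & -1\\ 1 & 0\end{bmatrix},
\]
valid for all $\bv\in\bbR^2$. Substituting $\bv=(A^\intercal A)^{-1}\hat\nabla\hat\psi$ collapses the factor $(A^\intercal A)$ and produces
\[
\bcurl_{\Gamma_h}\psi = \frac{A}{\sqrt{\det(A^\intercal A)}}\,J\,\hat\nabla\hat\psi = \calP_{F_T}\big(\hat\nabla^\perp\hat\psi\big),
\]
where $\hat\nabla^\perp\hat\psi = J\hat\nabla\hat\psi\in[\bbP_{k-1}(\hat T)]^2$. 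This is exactly the elementwise condition in the definition of $\Sigma_h$, with the reference field $\hat\bq = \hat\nabla^\perp\hat\psi$.

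It then remains to check $\bcurl_{\Gamma_h}\psi\in\bH({\rm div}_{\Gamma_h};\Gamma_h)$, i.e.\ that its co-normal jumps vanish across interior edges. On $e=\p T_+\cap\p T_-$, the scalar triple product gives $\bmu_{T_\pm}\cdot\bcurl_{\Gamma_h}\psi = -\nab_{\Gamma_h}\psi\cdot\bt_{T_\pm}$, so that $\jump{\bcurl_{\Gamma_h}\psi} = -\big(\nab_{\Gamma_h}\psi|_{T_+}\cdot\bt_{T_+} + \nab_{\Gamma_h}\psi|_{T_-}\cdot\bt_{T_-}\big)$. Here I would use that $\bt_{T_+}=-\bt_{T_-}$ pointwise on $e$ (both are unit tangents to the same edge curve, oppositely oriented by the orientation convention) and that the tangential derivative $\nab_{\Gamma_h}\psi\cdot\bt$ along $e$ is determined by the single-valued trace $\psi|_e$ and therefore agrees when computed from either side; together these force the jump to vanish, giving conformity.

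The cross-product algebra leading to the matrix identity is the main computational step but is entirely routine. Conceptually, the delicate point is the conformity argument: one must verify that $\bt_{T_+}=-\bt_{T_-}$ on the curved edge and that the edge-tangential derivative of the $C^0$ function $\psi$ is single-valued there, even though the two elements carry distinct normals $\bn_{T_\pm}$ and distinct co-normals $\bmu_{T_\pm}$ lying in different tangent planes. I expect this orientation and continuity bookkeeping to be where care is needed, whereas the elementwise Piola identity is a direct calculation.
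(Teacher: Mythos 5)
Your proposal is correct and follows essentially the same route as the paper: an elementwise computation showing $\bcurl_{\Gamma_h}\psi|_T=\calP_{F_T}(\widehat{\nabla}^\perp\hat\psi)$ (your triple-product identity $\bn_h\times(A\bv)=\det(A^\intercal A)^{-1/2}AJ(A^\intercal A)\bv$ is exactly the paper's identity $\bn_T^\times C^\intercal S^{-1}=DF_T/\sqrt{\det(DF_T^\intercal DF_T)}$, up to the sign convention for $\bn_h$), followed by the same co-normal continuity argument via the scalar triple product, $\bt_{T_+}=-\bt_{T_-}$, and single-valuedness of the edge-tangential derivative of the $C^0$ function $\psi$.
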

\begin{proof}
We only prove the first assertion, as the second is proved identically.

We first show that ${\bf curl}_{\Gamma_h} \psi|_T = \calP_{F_T} \hat \bq$ for some $\hat \bq\in [\bbP_{k-1}(\hat T)]^2\ \forall T\in \calT_h$.
Fixing $T\in \calT_h$, we calculate using the chain and product rules,
\begin{equation}\label{eqn:InvJacobian}
\Grad \hat x = (A^\intercal A)^{-1}A^\intercal =:C
\end{equation}

Next, let $\psi\in V_h$ and let $\hat \psi\in \bbP_k(\hat T)$ be related via
$\psi|_T(x) = \hat \psi(\hat x)$ with $x = F_T(\hat x)$.  
The outward normal of $\Gamma_h$ restricted to $T$
is $\bn_T = \bt_2\times \bt_1/|\bt_2\times \bt_1|$,
where we assume that $F_T$ 
is the bijection such that 
$\bt_j$ is the $j$th column of $A$. We also set 
\[
S = \begin{pmatrix}
    0 & 1\\
    -1 & 0
\end{pmatrix}.
\]
We then have by the chain rule and \eqref{eqn:InvJacobian},
\begin{align*}
({\bf curl}_{\Gamma_h} \psi)(x) = \bn_T^\times \nab \psi(x) = \bn_T^\times C^\intercal \hat \nab \hat \psi(\hat x) = 
\bn_T^\times C^\intercal S^{-1} \widehat{\bf curl}\hat \psi(\hat x),
\end{align*}
where $\widehat{\bf curl}\hat \psi(\hat x) = (\p \hat \psi/\p \hat x_2,-\p \hat \psi/\p \hat x_1)^\intercal$.
A short calculation reveals $\bn_T^\times C^\intercal S^{-1} = DF_T/|\bt_2\times \bt_1| = DF_T/\sqrt{\det(DF_T^\intercal DF_T)}$,
and so
\begin{align*}
({\bf curl}_{\Gamma_h} \psi)(x) 
& = \left(\frac{DF_{{T}}}{\sqrt{\det(DF_T^\intercal DF_T)}}\widehat{\bf curl}\hat \psi\right)(\hat x) = \calP_{F_T} \widehat {\bf curl} \hat \psi.
\end{align*}
Thus, ${\bf curl}_{\Gamma_h} \psi|_T = \calP_{F_T} \hat \bq$ for some $\hat \bq\in [\bbP_{k-1}]^2$.

It remains to show ${\bf curl}_{\Gamma_h} \psi \in \bH({\rm div}_h;\Gamma_h)$,
i.e.,  ${\bf curl}_{\Gamma_h} \psi$ has co-normal continuity.
To this end, let $e = \p T_+\cap \p T_- \in \calE_h$ be an edge with $T_{\pm}\in \calT_h$.
Let $\bmu_{\pm}$ and $\bn_{\pm}$ be the outward unit co-normal and normal, respectively, of $\p T_{\pm}$ restricted to $e$,
and set $\psi_{\pm} = \psi|_{T_{\pm}}$. We also let $\bt_{\pm} = \bmu_{\pm} \times \bn_{\pm}$
to be the vector tangent to $e$ and note that $\bt_+ = -\bt_-$.
On $e$ we have
\begin{align*}
({\bf curl}_{\Gamma_h} \psi_+)\cdot \bmu_+ + ({\bf curl}_{\Gamma_h} \psi_-)\cdot \bmu_-
& = 
(\bn_+ \times \nab_{\Gamma_h} \psi_+) \cdot \bmu_+ + (\bn_-\times \nab _{\Gamma_h} \psi_-)\cdot \bmu_-\\
& = (\bmu_+ \times \bn_+  ) \cdot \nab_{\Gamma_h} \psi_+ + (\bmu_- \times \bn_-  ) \cdot \nab_{\Gamma_h} \psi_-\\
& = \bt_+\cdot \nab_{\Gamma_h} \psi_++\bt_-\cdot \nab_{\Gamma_h} \psi_- = 0,
\end{align*}
where we used the continuity of $\psi$ in the last equality.
Thus we conclude ${\bf curl}_{\Gamma_h} \psi \in \bH({\rm div}_{\Gamma_h};\Gamma_h)$,
and so ${\bf curl}_{\Gamma_h} \psi\in \Sigma_h$.
\end{proof}

\subsection{Proof of inequalities \eqref{ineq.1}}

\begin{proof}
We start with the discrete Korn inequality in \cite{Kilicer}:
\begin{equation}
 \|\Grad_{\Gamma_h} \bq\|_{L^2(\calT_h)}
 \lesssim 
\|E_{\Gamma_{h}}\bq \|_{L^{2}(\calT_h)}
+ \left(\sum_{e \in \calE_{h}} h^{-1}\left\|[\bq ]\right\|_{L^{2}(e)}^{2}\right)^{\frac{1}{2}} + \left\|\bq\right\|_{L^{2}\left(\Gamma_{h}\right)}\quad \forall \bq\in \Sigma_h,
\end{equation}
where, on $e = \p T_+\cap \p T_-\in \calE_h$, $[\bq] = \bq_+-\bq_-$.

Using Lemma \ref{lem:CurlBDM}, we take 
$\bq = \bcurl_{\Gamma_h} \psi \in \Sigma_h$ to obtain
\begin{equation}\label{eqn:PsiLineoo}
\begin{split}
 \|\Grad_{\Gamma_h} \bcurl_{\Gamma_h} \psi \|_{L^2(\calT_h)}
& \lesssim \| H_{\Gamma_h}(\psi)\|_{L^2(\calT_h)}
+ \left(\sum_{e \in \calE_{h}}h^{-1}\left\|[\bcurl_{\Gamma_{h}}\psi]\right\|_{L^{2}(e)}^{2}\right)^{\frac{1}{2}} 
+ \|\nab_{\Gamma_{h}}\psi\|_{L^{2}(\Gamma_{h})}.
\end{split}
\end{equation}
On  $e = \p T_+\cap \p T_-$, we use
the fact that $\{\bt_{T_{\pm}}, \bmu_{T_\pm}, \bn_{T_\pm}\}$ is an orthonormal basis to write
\begin{align*}
\bcurl_{\Gamma_{h}}\psi_{\pm} 
& = (\bmu_{T_{\pm}} \cdot \nab_{\Gamma_{h}}\psi_{\pm})\bt_{T_{\pm}} - \left(\bt_{T_{\pm}} \cdot \nab_{\Gamma_{h}}\psi_{\pm}\right)\bmu_{T_{\pm}}.
\end{align*}
Thus, using the continuity of $\psi$, we have
\begin{align*}
\left[\bcurl_{\Gamma_{h}}\psi\right] 
& = \left(\left(\bmu_{T_+} \cdot \nab_{\Gamma_{h}}\psi_{+}\right)\bt_{{T}+} - \left(\bt_{{T}+} \cdot \nab_{\Gamma_{h}}\psi_{+}\right)\bmu_{{T}+}\right) - \left(\left(\bmu_{{T}-} \cdot \nab_{\Gamma_{h}}\psi_{-}\right)\bt_{{T}-} - \left(\bt_{{T}-} \cdot \nab_{\Gamma_{h}}\psi_{-}\right)\bmu_{{T}-}\right) \\
& = \jump{\nab_{\Gamma_{h}}\psi}\bt_{T_+} - \left(\bt_{T_+} \cdot \nab_{\Gamma_{h}}\psi_{+}\right)\left(\bmu_{T_+} + \bmu_{T_-}\right).
\end{align*}
%
%
It then follows from Lemma \ref{lem:GeomEstimates1} and Proposition \ref{prop:Trace} that

\begin{align*}
\sum_{e \in \calE_{h}}\left\|\left[\bcurl_{\Gamma_{h}}\psi\right]\right\|_{L^{2}(e)}^{2} 
& \lesssim \sum_{e \in \calE_{h}}\left\|\jump{\nab_{\Gamma_{h}}\psi}\right\|_{L^{2}(e)}^{2} 
+\left\|\nab_{\Gamma_{h}}\psi\right\|_{L^{2}\left(\Gamma_{h}\right)}^{2}.
\end{align*}
We apply this estimate to \eqref{eqn:PsiLineoo}, obtaining
\begin{align*}
\|\Grad_{\Gamma_{h}}\bcurl_{\Gamma_{h}} \psi\|_{L^2(\calT_h)}
& \lesssim \|H_{\Gamma_{h}}(\psi)\|_{L^{2}(\calT_h)}
+ h^{- \frac{1}{2}}\left(\sum_{e \in \calE_{h}}h_e^{-1} \|\jump{\nab_{\Gamma_{h}}\psi}\|_{L^{2}(e)}^{2}\right)^{1/2} + 
\|\nab_{\Gamma_{h}}\psi\|_{L^{2}(\Gamma_{h})} \lesssim \|\psi\|_{2,h}.
\end{align*}
Finally, we use \eqref{eqn:streamGrad} and the identity
$\bn_h^{\times}\bn_h^{\times} = - {\bf P}_h$ to get
\begin{align*}
\left|\Grad_{\Gamma_h}\bcurl_{\Gamma_h}\psi\right|^{2} & = \text{tr}\left(\left(\bn^{\times}_h\nab_{\Gamma_h}^{2}\psi\right)^{\intercal}\left(\bn^{\times}_h\nab_{\Gamma_h}^{2}\psi\right)\right) 
 = \text{tr}\left(- \nab_{\Gamma_h}^{2}\psi\bn_h^{\times}\bn_h^{\times}\nab_{\Gamma_h}^{2}\psi\right) 
 = \left|\nab_{\Gamma_h}^{2}\psi\right|^{2},
\end{align*}
and so $\|\nab_{\Gamma_h}^2 \psi\|_{L^2(\calT_h)}\lesssim \|\psi\|_{2,h}$.
Thus, the first inequality in \eqref{ineq.1} holds. 
The proof of the second inequality is nearly identical, so is omitted.
\end{proof}

\section{Proofs for Results in Section \ref{sec-Geo}}\label{app:DifferRelationsProof}
The proofs of Lemmas \ref{lem:SourceFormCons}--\ref{lem:DifferRelationsAgain} require an intermediate result, which provides relationships between differential operators on $\Gamma$ and $\Gamma_h$. This result is essentially found in \cite[Appendix C]{LarssonLarson17} and follows from the chain rule. For completeness we provide the proof.

\begin{lemma} \label{lem:ChainRuleFun}
There holds on $\Gamma_h$,
\begin{align}
\nab_{\Gamma_h}\psi & = {\bf P}_h({\bf P} - d{\bf H})(\nab_\Gamma\psi^\ell)^e, \label{eqn:gradChain} \\
\begin{split}
{\bf curl}_{\Gamma_h}\psi & = \bn_h^\times{\bf P}_h({\bf P} - d{\bf H})(\nab_\Gamma\psi^\ell)^e \\
& = \mu_h\bigg({\bf I} - \frac{\bn \otimes \bn_h}{\bn \cdot \bn_h}\bigg)({\bf I} - d{\bf H})^{- 1}(\bcurl_{\Gamma}\psi^\ell)^e,
\end{split} \label{eqn:curlChain} \\
\begin{split}
\nab_{\Gamma_h}^2\psi & = {\bf P}_h\big((\nab_\Gamma^2\psi^\ell)^e - d{\bf H}(\nab^2_\Gamma\psi^\ell)^e - d(\nab^2_\Gamma\psi^\ell)^e{\bf H} + d^2{\bf H}(\nab^2_\Gamma\psi^\ell)^e{\bf H}\big){\bf P}_h \\
& \qquad - ({\bf P}_h\bn) \otimes ({\bf H}(\nab_\Gamma\psi^\ell)^e){\bf P}_h - ({\bf P}_h{\bf H}(\nab_\Gamma\psi^\ell)^e) \otimes ({\bf P}_h\bn) \\
& \qquad - d{\bf P}_h\nab{\bf H}(\nab_\Gamma\psi^\ell)^e{\bf P}_h,
\end{split} \label{eqn:hessChain}
\end{align}
where
\begin{equation}\label{eqn:GradOfTensor}
(\nab{\bf H}\bv)_{i, j} = \sum_{k = 1}^3\frac{\p H_{i, k}}{\p x_j}v_k.
\end{equation}
\end{lemma}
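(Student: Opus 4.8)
The plan is to derive all three identities from the chain rule, using as the single starting point the observation that $\psi = \psi^\ell$ on $\Gamma_h$. Indeed $\psi^\ell = (\psi\circ x_h)\circ \bp$, and for $x\in \Gamma_h$ one has $\bp(x)\in \Gamma$ with $x_h(\bp(x)) = x$, so $\psi^\ell|_{\Gamma_h} = \psi$; thus $\psi^\ell$ is a globally defined (on $U_\delta$) extension of $\psi$ off $\Gamma_h$. Since the surface gradient is the tangential projection of the ambient gradient of \emph{any} extension, this gives $\nab_{\Gamma_h}\psi = {\bf P}_h\nab\psi^\ell$ at once, and the whole lemma reduces to expressing $\nab\psi^\ell$ and $\nab^2\psi^\ell$ in surface quantities.

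First I would establish \eqref{eqn:gradChain}. Because $\psi^\ell$ is constant along the normals of $\Gamma$, it coincides with the normal extension of its own trace, so differentiating $\psi^\ell = (\psi^\ell|_\Gamma)\circ \bp$ and using $\bp(x) = x - d\bn$, $\nab d = \bn$, $D\bn = {\bf H}$ gives $\nab\psi^\ell = D\bp\,(\nab_\Gamma\psi^\ell)^e = ({\bf P} - d{\bf H})(\nab_\Gamma\psi^\ell)^e$; substituting into $\nab_{\Gamma_h}\psi = {\bf P}_h\nab\psi^\ell$ yields \eqref{eqn:gradChain}. For the curl identities \eqref{eqn:curlChain} I would apply $\bn_h^\times$ to \eqref{eqn:gradChain} and use $\bn_h^\times{\bf P}_h = \bn_h^\times$ (since $\bn_h^\times\bn_h = 0$) to get the first line. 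The second line asserts that $\bcurl_{\Gamma_h}\psi$ is the inverse Piola transform of $(\bcurl_\Gamma\psi^\ell)^e = \bn^\times(\nab_\Gamma\psi^\ell)^e$ under the closest point projection; I would obtain it from cross-product algebra relating $\bn_h^\times$ and $\bn^\times$ together with the surface-measure formula $\mu_h = (\bn\cdot\bn_h)\prod_i(1-d\kappa_i)$. Equivalently, this is the relation $\calP_\bp(\bcurl_{\Gamma_h}\psi) = \bcurl_\Gamma\psi^\ell$ invoked in Corollary \ref{Cor-6.4}.

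Third comes the Hessian identity \eqref{eqn:hessChain}, which I expect to be the main obstacle. The route is to differentiate \eqref{eqn:gradChain} a second time, computing ${\bf P}_h\nab^2\psi^\ell{\bf P}_h$ from $\nab\psi^\ell = (\nab_\Gamma\psi^\ell)^e - d{\bf H}(\nab_\Gamma\psi^\ell)^e$ (using ${\bf P}(\nab_\Gamma\psi^\ell)^e = (\nab_\Gamma\psi^\ell)^e$). Applying the product rule to the term $d{\bf H}(\nab_\Gamma\psi^\ell)^e$ produces, through $\nab d = \bn$, the factor $\bn\otimes({\bf H}(\nab_\Gamma\psi^\ell)^e)$; through the derivative of ${\bf H}$, the term $d\,\nab{\bf H}(\nab_\Gamma\psi^\ell)^e$; and through the derivative of $(\nab_\Gamma\psi^\ell)^e$, the curvature-weighted Hessian terms $d{\bf H}(\nab_\Gamma^2\psi^\ell)^e$, $d(\nab_\Gamma^2\psi^\ell)^e{\bf H}$, and $d^2{\bf H}(\nab_\Gamma^2\psi^\ell)^e{\bf H}$, where the passage from the ambient Hessian $\nab^2\psi^\ell$ to the surface Hessian $(\nab_\Gamma^2\psi^\ell)^e$ is carried out with \eqref{eqn:Intermed} and Lemma \ref{lem:HessIdentities}. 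The organizing principle throughout is that sandwiching by ${\bf P}_h$ annihilates every genuinely normal contribution of the form $\bn_h\otimes(\cdot)$ or $(\cdot)\otimes\bn_h$, while converting each $\bn\otimes(\cdot)$ term into the recorded $({\bf P}_h\bn)\otimes(\cdot)$ term.

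The hard part will be this second-order bookkeeping: many curvature contributions ($\bn\otimes(\cdot)$, $\nab{\bf H}$, and terms quadratic in ${\bf H}$) are generated simultaneously, and one must apply the tangential/normal split under ${\bf P}_h$ consistently and convert ambient Hessians into surface Hessians without error or circularity. One subtle point I would verify carefully is that the only second fundamental form entering the final expression is that of $\Gamma$ (through ${\bf H}$ and $\nab{\bf H}$), so that any contribution involving the curvature of $\Gamma_h$ either cancels or is absorbed into the $({\bf P}_h\bn)\otimes(\cdot)$ terms. Once \eqref{eqn:gradChain} is in hand the gradient and curl identities are essentially immediate, so effectively the entire content of the lemma sits in the Hessian computation.
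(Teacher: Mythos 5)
Your proposal follows essentially the same route as the paper's proof in Appendix~C: both start from the observation that $\psi^\ell$ is a global extension of $\psi$ that is constant along the normals of $\Gamma$, differentiate $\psi^\ell = \psi^\ell\circ\bp$ to get $\nab\psi^\ell = ({\bf I}-d{\bf H})(\nab_\Gamma\psi^\ell)^e$ and hence \eqref{eqn:gradChain}, obtain \eqref{eqn:curlChain} from the algebra of $\bn_h^\times$, $\bn^\times$, ${\bf H}$ together with the product formula for $\mu_h$, and get \eqref{eqn:hessChain} by a second application of the chain/product rule followed by sandwiching with ${\bf P}_h$. The only (cosmetic) difference is in the Hessian step, where the paper differentiates the chain-rule identity $\p_i\psi^\ell = \sum_k(\p_i p_k)(\p_k\psi^\ell)\circ\bp$ and computes $\p^2 p_k/\p x_i\p x_j$ explicitly, whereas you propose to differentiate the already-derived gradient formula; the generated terms ($\bn\otimes({\bf H}\nab_\Gamma\psi^\ell)^e$, $d\,\nab{\bf H}(\nab_\Gamma\psi^\ell)^e$, and the curvature-weighted Hessian terms) are the same either way.
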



\begin{proof}
\textit{Identity} \eqref{eqn:gradChain}: Since $\psi^\ell$ is constant along normals, we have $\psi^\ell = \psi^\ell \circ \bp$. Likewise, we have $\bn = \bn\circ \bp$. Thus, taking the gradient of $\psi^\ell = \psi^\ell \circ \bp$ yields
\begin{align}
\begin{split}
\nab\psi^\ell & = ({\bf I} - d{\bf H}){\bf P}\nab\psi^\ell \circ \bp \\
& = ({\bf I} - d{\bf H})({\bf P} \circ \bp)(\nab\psi^\ell \circ \bp) \\
& = ({\bf I} - d{\bf H})(\nab_{\Gamma}\psi^\ell)^e.
\end{split} \label{gradpsiell}
\end{align}
Consequently,
\begin{align*}
\nab_{\Gamma_h}\psi = {\bf P}_h\nab\psi^\ell = {\bf P}_h({\bf I} - d{\bf H})(\nab_{\Gamma}\psi^\ell)^e,
\end{align*}
or
\begin{align*}
(\nab_{\Gamma_h}\psi)^\ell = {\bf P}_h({\bf I} - d{\bf H})\nab_{\Gamma}\psi^\ell.
\end{align*}

\textit{Identities} \eqref{eqn:curlChain}: The first identity in \eqref{eqn:curlChain} follows from \eqref{eqn:gradChain} and the definition of the surface curl operator. To prove the second identity in \eqref{eqn:curlChain}, we recall
\begin{align*}
{\bf curl}_{\Gamma_h}\psi = \bn_h^\times{\bf P}_h({\bf P} - d{\bf H})(\nab_\Gamma\psi^\ell)^e \quad \Longrightarrow \quad ({\bf curl}_{\Gamma_h}\psi)^\ell = \bn_h^\times{\bf P}_h({\bf P} - d{\bf H})\nab_\Gamma\psi^\ell.
\end{align*}
Note that
\[
- \bn^\times\bcurl_{\Gamma}\psi^\ell = - \bn^\times\bn^\times\nab_\Gamma\psi^\ell = {\bf P}\nab_\Gamma\psi^\ell = \nab_\Gamma\psi^\ell.
\]
Thus,
\begin{align*}
({\bf curl}_{\Gamma_h}\psi)^\ell & = - \bn_h^\times{\bf P}_h({\bf P} - d{\bf H})\bn^\times\bcurl_\Gamma \psi^\ell \\
& = \big(- \bn_h^\times\bn^\times + d\bn_h^\times{\bf H}\bn^\times\big)\bcurl_\Gamma\psi^\ell.
\end{align*}
Now a direct calculation shows
\[
{\bf H}\bn^\times - ({\bf H}\bn^\times)^\intercal = (\nab \cdot \bn)\bn^\times = {\rm tr}({\bf H})\bn^\times.
\]
Consequently by the skew-symmetry of $\bn^\times$, there holds
\begin{align*}
\bn_h^\times{\bf H}\bn^\times 
%
%
& = \bn_h^\times\bn^\times({\rm tr}({\bf H}){\bf I} - {\bf H}).
\end{align*}
Thus, we have
\begin{align*}
({\bf curl}_{\Gamma_h}\psi)^\ell 
& = \bn_h^\times\bn^\times\big(\big[d{\rm tr}({\bf H}) - 1\big]{\bf I} - d{\bf H}\big)\bcurl_\Gamma\psi^\ell \\
& = ((\bn \cdot \bn_h){\bf I} - \bn \otimes \bn_h)
\big(\big[1 - d{\rm tr}({\bf H})\big]{\bf I} + d{\bf H}\big)\bcurl_\Gamma\psi^\ell.
%
\end{align*}
Noting that
\begin{align*}
({\bf I} - d{\bf H})\big(\big[1 - d{\rm tr}({\bf H})\big]{\bf I} + d{\bf H}\big)
& = {\bf I} - d{\rm tr}({\bf H}){\bf I} + d^2({\rm tr}({\bf H}){{\bf H}} - {\bf H}^2) \\
& = {\bf I} - d{\rm tr}({\bf H}){\bf I} + d^2K{\bf P},
\end{align*}
and $1 - d{\rm tr}({\bf H}) + d^2K = (1 - d\kappa_1)(1 - d\kappa_2)$, we have
\begin{align*}
({\bf curl}_{\Gamma_h}\psi)^\ell 
& = (1 - d\kappa_1)(1 - d\kappa_2)((\bn \cdot \bn_h){\bf I} - \bn \otimes \bn_h)({\bf I} - d{\bf H})^{- 1}\bcurl_\Gamma\psi^\ell \\
%
%
& = \mu_h\bigg({\bf I} - \frac{\bn \otimes \bn_h}{\bn \cdot \bn_h}\bigg)({\bf I} - d{\bf H})^{- 1}\bcurl_\Gamma\psi^\ell.
\end{align*}

\textit{Identity} \eqref{eqn:hessChain}: We take the derivative of
\begin{align*}
\frac{\p \psi^\ell}{\p x_i}(x) = \big(\nab\bp(x)\nab\psi^\ell(\bp(x))\big)_i = \big(\nab\bp(x)^\intercal\nab\psi^\ell(\bp(x))\big)_i = \sum_{k = 1}^3\frac{\p p_k}{\p x_i}(x)\frac{\p \psi^\ell}{\p x_k}(\bp(x)),
\end{align*}
and use the chain rule to compute
\begin{align} \label{eqn:HessChain1}
(\nab^2\psi^\ell)_{i, j} = \big(({\bf I} - d{\bf H}){\bf P}(\nab^2\psi^\ell)^e({\bf I} - d{\bf H}){\bf P}\big)_{i, j} + \sum_{k = 1}^3(\nab\psi^\ell)_k^e\frac{\p^2p_k}{\p x_i\p x_j}.
\end{align}
We have
\begin{align*}
\frac{\p^2 p_k}{\p x_i\p x_j} & = - \frac{\p n_k}{\p x_j}n_i - n_k\frac{\p n_j}{\p x_i} - n_j\frac{\p n_k}{\p x_i} - d\frac{\p^2 n_k}{\p x_i\p x_j} \\
& = - H_{j, k}n_i - n_kH_{i, j} - n_jH_{i, k} - d\frac{\p H_{i, k}}{\p x_j},
\end{align*}
and so
\begin{align}
\begin{split} \label{eqn:HessChain2}
& \sum_{k = 1}^3(\nab\psi^\ell)^e_k\frac{\p^2 p_k}{\p x_i\p x_j} \\
& = - \big(\bn \otimes ({\bf H}(\nab\psi^\ell)^e) + {\bf H}(\bn \cdot (\nab\psi^\ell)^e) + ({\bf H}(\nab\psi^\ell)^e) \otimes \bn + d\nab{\bf H}(\nab\psi^\ell)^e\big)_{i, j} \\
& = - \big(\bn \otimes ({\bf H}(\nab_\Gamma\psi^\ell)^e) + ({\bf H}(\nab_\Gamma\psi^\ell)^e) \otimes \bn + d\nab{\bf H}(\nab_{\Gamma}\psi^\ell)^e\big)_{i, j},
\end{split}
\end{align}
where we used the fact that $\psi^\ell$ is constant along normals in the last equality.
Combining \eqref{eqn:HessChain1}--\eqref{eqn:HessChain2} and the identity ${\bf H}{\bf P} = {\bf P}{\bf H}$ yields
\begin{align*}
\nab^2\psi^\ell & = ({\bf I} - d{\bf H}){\bf P}(\nab^2\psi^\ell)^e{{\bf P}}({\bf I} - d{\bf H}) \\
& \qquad - \big(\bn \otimes ({\bf H}(\nab_\Gamma\psi^\ell)^e) + ({\bf H}(\nab_\Gamma\psi^\ell)^e) \otimes \bn + d\nab{\bf H}(\nab_\Gamma\psi^\ell)^e\big).
\end{align*}
Recalling $\nab_\Gamma^2\psi^\ell = {\bf P}\nab^2\psi^\ell{\bf P}$, we have
\begin{align*}
\nab^2\psi^\ell
& = (\nab_\Gamma^2\psi^\ell)^e - d{\bf H}(\nab^2_\Gamma\psi^\ell)^e - d(\nab^2_{\Gamma}\psi^\ell)^e{\bf H} + d^2{\bf H}(\nab^2_\Gamma\psi^\ell)^e{\bf H} \\
& \qquad - \big(\bn \otimes ({\bf H}(\nab_\Gamma\psi^\ell)^e) + ({\bf H}(\nab_\Gamma\psi^\ell)^e) \otimes \bn + d\nab{\bf H}(\nab_\Gamma\psi^\ell)^e\big),
\end{align*}
and therefore,
\begin{align*}
 \nab_{\Gamma_h}^2\psi
%
& = {\bf P}_h(\nab_\Gamma^2\psi^\ell)^e{\bf P}_h - d{\bf P}_h{\bf H}(\nab^2\psi^\ell_\Gamma)^e{\bf P}_h - d{\bf P}_h(\nab^2_\Gamma\psi^\ell)^e{\bf H}{\bf P}_h + d^2{\bf P}_h{\bf H}(\nab^2_\Gamma\psi^\ell)^e{\bf H}{\bf P}_h \\
& \qquad - \big(({\bf P}_h\bn) \otimes ({\bf H}(\nab_\Gamma\psi^\ell)^e){\bf P}_h + ({\bf P}_h{\bf H}(\nab_\Gamma\psi^\ell)^e) \otimes ({\bf P}_h\bn) + d{\bf P}_h\nab{\bf H}(\nab_\Gamma\psi^\ell)^e{\bf P}_h\big).
\end{align*}
\end{proof}

To ease notation in the rest of this section, we set
\begin{align}
{\bf M} := ({\bf I} - d{\bf H})^{- 1}, \qquad {\bf N} := {\bf I} - \frac{\bn \otimes \bn_h}{\bn \cdot \bn_h}, \qquad {\bf L}:=\mu_h^{-1}({\bf P}-d {\bf H}),
\label{notations.MN}
\end{align}
so that the second identity in Lemma \ref{lem:ChainRuleFun} reads 
\begin{align*}
\bcurl_{\Gamma_h}\psi = \mu_h {\bf N}{\bf M} (\bcurl_\Gamma \psi^\ell)^e\ \ (\psi\in H^1(\Gamma_h)) \ \Longrightarrow \  {\bf curl}_{\Gamma_h}\psi^e = \mu_h {\bf N}{\bf M} (\bcurl_\Gamma\psi)^e\ \ (\psi\in H^1(\Gamma)),
\end{align*}
and we also have
\begin{align*}
(\bcurl_\Gamma\psi^\ell)^e = {\bf L}({\bf curl}_{\Gamma_h} \psi).
\end{align*}
We also note that
\begin{equation} \label{eqn:LMBound}
\big|{\bf P} - {\bf L}\big| \lesssim h^{k_g + 1}, \quad \text{and} \quad \big|{\bf I} - {\bf M}\big| \lesssim h^{k_g + 1}.
\end{equation}

\subsection{Proof of Lemma \ref{lem:SourceFormCons}}
\begin{proof}
We make a change of variables   to obtain
\begin{align*}
\ell_h(\psi) - \ell(\psi^\ell) 
& = \int_{\Gamma}\mu_h^{- 1}{\bm f}^\ell_h \cdot (\bcurl_{\Gamma_h}\psi)^\ell - \int_{\Gamma}{\bm f} \cdot \bcurl_{\Gamma}\psi^\ell \\
\end{align*}
Applying \eqref{eqn:curlChain}, we have
\begin{align*}
\int_{\Gamma}\mu_h^{-1} {\bm f}_h^\ell \cdot (\bcurl_{\Gamma_h} \psi)^\ell
& = \int_\Gamma {\bm f}_h^\ell \cdot ({\bf N}{\bf M} \bcurl_\Gamma \psi^\ell)\\
%
%
 & = \int_\Gamma (\bcurl_\Gamma \psi^\ell)^\intercal {\bf M} \left( {\bf I}- \frac{\bn_h \otimes \bn}{\bn \cdot \bn_h}\right) 
 {\bm f}_h^\ell\\
 %
   & = \int_\Gamma (\bcurl_\Gamma \psi^\ell)^\intercal {\bf M} \left( {\bf I}- \frac{({\bf P}\bn_h) \otimes ({\bf P}_h \bn)}{\bn \cdot \bn_h}\right) 
 {\bm f}_h^\ell\\
 &\qquad -  \int_\Gamma  (\bcurl_\Gamma \psi^\ell)^\intercal \left(({\bf M} \bn)\otimes ({\bf P}_h \bn)\right) {\bm f}_h^\ell,
\end{align*}
where we used ${\bm f}^\ell _h\cdot \bn_h = 0$  in the last  equality. Since ${\bf M}\bn = \bn$
and $(\bcurl \psi^\ell)\cdot \bn = 0$, we conclude
\begin{align*}
 \int_{\Gamma}\mu_h^{-1} {\bm f}_h^\ell \cdot (\bcurl_{\Gamma_h} \psi)^\ell
   & = \int_\Gamma (\bcurl_\Gamma \psi^\ell)^\intercal {\bf M} \left( {\bf I}- \frac{({\bf P}\bn_h) \otimes ({\bf P}_h \bn)}{\bn \cdot \bn_h}\right) 
 {\bm f}_h^\ell = \int_\Gamma {\bm F}_h^\ell \cdot \bcurl_\Gamma \psi^\ell,
\end{align*}
and so
\begin{align*}
    \ell_h(\psi) - \ell(\psi^\ell)  = \int_\Gamma ({\bm F}_h^\ell-{\bm f}) \cdot \bcurl_\Gamma \psi^\ell\le \|{\bm f}-{\bm F}_h^\ell\|_{L^2(\Gamma)}\tbar{\psi^\ell}_H.
\end{align*}

 \end{proof}

\subsection{Proof of Lemma \ref{lem:DifferRelations}}
\begin{proof}\
\begin{itemize}[leftmargin=*]
\item\textit{Estimate} \eqref{ApproxOper.1}: By \eqref{eqn:deformGrad},
\begin{align*}
(H_{\Gamma}(\psi))^{e} & = \frac{1}{2}\big(\bn^{\times}(\nab_{\Gamma}^{2}\psi)^{e} - (\nab_{\Gamma}^{2}\psi)^{e}\bn^{\times}\big), \\
H_{\Gamma_{h}}(\psi^{e}) & = \frac{1}{2}\big(\bn_{h}^{\times}\nab_{\Gamma_{h}}^{2}\psi^{e} - \nab_{\Gamma_{h}}^{2}\psi^{e}\bn_{h}^{\times}\big).
\end{align*}
Then, the triangle inequality, \eqref{eqn:GeoBounds} and a change of integration domain 
(cf.~\eqref{eqn:muCloseToOne}) yield
\begin{align}
\begin{split}
& \|(H_{\Gamma}(\psi))^{e} - H_{\Gamma_{h}}(\psi^{e})\|_{L^{2}(\calT_h)} \\
& \leq \frac{1}{2}\|\bn^{\times}(\nab_{\Gamma}^{2}\psi)^{e} - \bn_{h}^{\times}\nab_{\Gamma_{h}}^{2}\psi^{e}\|_{L^{2}(\calT_h)} 
+ \frac{1}{2}\|(\nab_{\Gamma}^{2}\psi)^{e}\bn^{\times} - \nab_{\Gamma_{h}}^{2}\psi^{e}\bn_{h}^{\times}\|_{L^{2}(\calT_h)} \\
& \leq \frac{1}{2}\|\big(\bn^{\times} - \bn_{h}^{\times}\big)(\nab_{\Gamma}^{2}\psi)^{e}\|_{L^{2}(\calT_h)} 
+ \frac{1}{2}\|\bn_{h}^{\times}\big((\nab_{\Gamma}^{2}\psi)^{e} - \nab_{\Gamma_{h}}^{2}\psi^{e}\big)\|_{L^{2}(\calT_h)} \\
& \qquad + \frac{1}{2}\|(\nab_{\Gamma}^{2}\psi)^{e}\big(\bn^{\times} - \bn_{h}^{\times}\big)\|_{L^{2}(\calT_h)} 
+ \frac{1}{2}\|\big((\nab_{\Gamma}^{2}\psi)^{e} - \nab_{\Gamma_{h}}^{2}\psi^{e}\big)\bn_{h}^{\times}\|_{L^{2}(\calT_h)} \\
& \lesssim h^{k_{g}}\|(\nab_{\Gamma}^{2}\psi)^{e}\|_{L^{2}(\calT_h)} 
+ \|(\nab_{\Gamma}^{2}\psi)^{e} - \nab_{\Gamma_{h}}^{2}\psi^{e}\|_{L^{2}(\calT_h)} \\
& \lesssim h^{k_{g}}\|\nab_{\Gamma}^{2}\psi\|_{L^{2}(\calT_h^\ell)} 
+ \|\nab_{\Gamma}^{2}\psi - (\nab_{\Gamma_{h}}^{2}\psi^{e})^{\ell}\|_{L^{2}(\calT_h^\ell)}.
\end{split} \label{ApproxOper.7}
\end{align}
Note that from \eqref{eqn:hessChain},
\begin{align*}
& (\nab_{\Gamma_{h}}^{2}\psi^{e})^{\ell} - \nab_{\Gamma}^{2}\psi \\
& = {\bf P}_{h}\nab_{\Gamma}^{2}\psi{\bf P}_{h} - \nab_{\Gamma}^{2}\psi - {\bf P}_{h}\big(d{\bf H}\nab_{\Gamma}^{2}\psi + d\nab_{\Gamma}^{2}\psi{\bf H} - d^{2}{\bf H}\nab_{\Gamma}^{2}\psi{\bf H}\big){\bf P}_{h} \\
& \qquad - ({\bf P}_{h}\bn) \otimes ({\bf H}\nab_{\Gamma}\psi){\bf P}_{h} - ({\bf P}_{h}{\bf H}\nab_{\Gamma}\psi) \otimes ({\bf P}_{h}\bn) - d{\bf P}_{h}\nab{\bf H}\nab_{\Gamma}\psi{\bf P}_{h} \\
& = \nab_{\Gamma}^{2}\psi(\bn - \bn_{h}) \otimes \bn_{h} + \bn_{h} \otimes (\bn - \bn_{h})\nab_{\Gamma}^{2}\psi + \bn_{h} \otimes (\bn - \bn_{h})\nab_{\Gamma}^{2}\psi(\bn - \bn_{h}) \otimes \bn_{h} \\
& \qquad - {\bf P}_{h}\big(d{\bf H}\nab_{\Gamma}^{2}\psi + d\nab_{\Gamma}^{2}\psi{\bf H} - d^{2}{\bf H}\nab_{\Gamma}^{2}\psi{\bf H}\big){\bf P}_{h} \\
& \qquad - ({\bf P}_{h}\bn) \otimes ({\bf H}\nab_{\Gamma}\psi){\bf P}_{h} - ({\bf P}_{h}{\bf H}\nab_{\Gamma}\psi) \otimes ({\bf P}_{h}\bn) - d{\bf P}_{h}\nab{\bf H}\nab_{\Gamma}\psi{\bf P}_{h}.
\end{align*}
Thus, \eqref{eqn:GeoBounds} and \eqref{GeoApprox.1} imply
\begin{align}
\|\nab_{\Gamma}^{2}\psi - (\nab_{\Gamma_{h}}^{2}\psi^{e})^{\ell}\|_{L^{2}(\calT_h^\ell)} 
\lesssim h^{k_{g}}\|\nab_{\Gamma}\psi\|_{L^{2}(\calT_h^\ell)} + h^{k_{g}}\|\nab_{\Gamma}^{2}\psi\|_{L^{2}(\calT_h^\ell)} 
\lesssim h^{k_g}\tbar{\psi}_H \label{ApproxOper.8},
\end{align}
and \eqref{ApproxOper.1} follows from \eqref{ApproxOper.7} and \eqref{ApproxOper.8}. \\

\item \textit{Estimate} \eqref{ApproxOper.2}:
By \eqref{eqn:gradChain} and ${\bf P}\nab_\Gamma \psi = \nab_\Gamma \psi$ we have
\begin{align*}
(\nab_{\Gamma_h}\psi^e)^\ell & = {\bf P}_h({\bf P} - d{\bf H})\nab_\Gamma \psi \\
& = \nab_\Gamma \psi + ({\bf P}_h - {\bf P})\nab_\Gamma \psi - d{\bf P}_h{\bf H}\nab_\Gamma \psi.
\end{align*}
Hence, due to a change of integration domain, \eqref{eqn:GeoBounds} and the definition of $\tbar{\cdot}_H$,
\[
\|(\nab_{\Gamma}\psi)^{e} - \nab_{\Gamma_{h}}\psi^{e}\|_{L^{2}(\Gamma_{h})} \lesssim \|\nab_{\Gamma}\psi - (\nab_{\Gamma_{h}}\psi^{e})^{\ell}\|_{L^{2}(\Gamma)} \lesssim h^{k_{g}}\|\nab_{\Gamma}\psi\|_{L^{2}(\Gamma)} \lesssim h^{k_{g}}\tbar{\psi}_H.
\]

\item \textit{Estimate} \eqref{ApproxOper.3}: 
For an edge $e\in \calE_h$, we write $e = \p T_+\cap \p T_-$ with $T_{\pm}\in \calT_h$.
Recalling $\nab_{\Gamma_h} \psi^e = {\bf P}_h ({\bf P}-d {\bf H}) (\nab_\Gamma \psi)^e$ 
and ${\bf P}_h \bmu_h = \bmu_h$, we have
\begin{align*}
\jump{\nab_{\Gamma_h} \psi^e} 
&= ({\bf P}-d {\bf H})\left[\big((\nab_\Gamma \psi)^e\big)^+\cdot \bmu_{T_+} +
\big((\nab_\Gamma \psi)^e\big)^-\cdot \bmu_{T_-}\right]\\
&= ({\bf P}-d {\bf H})\left[\big((\nab_\Gamma \psi)^+\big)^e\cdot \bmu_{T_+} +
\big((\nab_\Gamma \psi)^-\big)^e\cdot \bmu_{T_-}\right]\\
&= ({\bf P}-d {\bf H})(\jump{\nab_\Gamma \psi})^e
+({\bf P}-d {\bf H})\left[\big((\nab_\Gamma \psi)^+\big)^e\cdot (\bmu_{T_+} - \bmu^e_{T^\ell_+}) +
\big((\nab_\Gamma \psi)^-\big)^e\cdot (\bmu_{T_-}-\bmu^e_{T^\ell_-})\right],
\end{align*}
where we used the continuity of $({\bf P}-d {\bf H})$. We then have
\begin{equation}\label{eqn:JumpSplit}
\begin{split}
\big\|h^{-\frac12} \big((&\jump{\nab_{\Gamma}\psi})^{e} - \jump{\nab_{\Gamma_{h}}\psi^{e}}\big)\big\|_{L^2(\calE_h)}
 \le \big\|h^{-\frac12} \big[{\bf I}-({\bf P}-d {\bf H})\big](\jump{\nab_\Gamma \psi})^e\big\|_{L^2(\calE_h)}\\
&+ \big\|h^{-\frac12}({\bf P}-d {\bf H})\big[\big((\nab_\Gamma \psi)^+\big)^e\cdot (\bmu_{T_+} - \bmu^e_{T^\ell_+}) +
\big((\nab_\Gamma \psi)^-\big)^e\cdot (\bmu_{T_-}-\bmu^e_{T^\ell_-})\big]\big\|_{L^2(\calE_h)}\\
&=:I_1+I_2.
\end{split}
\end{equation}
To bound $I_1$, we use ${\bf P}(\jump{\nab_\Gamma \psi})^e = (\jump{\nab_\Gamma \psi})^e$
and \eqref{eqn:GeoBounds} to obtain
\begin{align}\label{eqn:JI1}
    I_1\lesssim h^{k_g+1} \big\|h^{-\frac12} (\jump{\nab_\Gamma \psi})^e\big\|_{L^2(\calE_h)}\lesssim h^{k_g+1}\tbar{\psi}_H.
\end{align}
Next we write
\begin{align*}
({\bf P}-d {\bf H})\big((\nab_\Gamma \psi)^{\pm}\big)^e \cdot (\bmu_{T_{\pm}}-\bmu^e_{T^\ell_{\pm}})\big)
& = \big((\nab_\Gamma \psi)^{\pm}\big)^e \cdot ({\bf P}\bmu_{T_{\pm}}-\bmu^e_{T^\ell_{\pm}})\\
&\qquad - d {\bf H}\big((\nab_\Gamma \psi)^{\pm}\big)^e \cdot (\bmu_{T_{\pm}}-\bmu^e_{T^\ell_{\pm}}),
\end{align*}
and apply \eqref{GeoApprox.3} and \eqref{eqn:GeoBounds}:
\begin{align*}
\big\|h^{-\frac12} ({\bf P}-d {\bf H})\big((\nab_\Gamma \psi)^{\pm}\big)^e \cdot (\bmu_{T_{\pm}}-\bmu^e_{T^\ell_{\pm}})\big)\big\|_{L^2(\calE_h)}\lesssim h^{k_g+1}
\big\|h^{-\frac12}\big((\nab_\Gamma \psi)^{\pm}\big)^e\big\|_{L^2(\calE_h)}.
\end{align*}
 It then follows from a trace inequality and the definition of $\tbar{\cdot}_H$ that
 \begin{align}\label{eqn:JI2}
 I_2 \lesssim h^{k_g} \tbar{\psi}_H.
 \end{align}
 The desired estimate \eqref{ApproxOper.3} now follows from \eqref{eqn:JumpSplit}--\eqref{eqn:JI2}.

\item \textit{Estimate} \eqref{ApproxOper.4}: It is sufficient to show that
\begin{align}\label{eqn:SufficesABC}
h \sum_{T\in \calT_h} \|(\bt^{\intercal}H_{\Gamma}(\psi)\bmu)^{e} - \bt_h^{\intercal}H_{\Gamma_{h}}(\psi^{e})\bmu_h\|^2_{L^{2}(\p T)} \lesssim h^{2 k_{g}}\tbar{\psi}_H^2
\end{align}
By similar arguments as in the proof of \eqref{ApproxOper.1}, \eqref{GeoApprox.4}, and \eqref{GeoApprox.5}, we have
\begin{align*}
& \|(\bt^{\intercal}H_{\Gamma}(\psi)\bmu)^{e} - \bt_h^{\intercal}H_{\Gamma_{h}}(\psi^{e})\bmu_h\|_{L^{2}(\p T)}^2 \\
& \leq \|\big(\bt^e - \bt_h\big)^\intercal(H_{\Gamma}(\psi))^{e}\bmu^e\|^2_{L^{2}(\p T)} + \|\bt^{\intercal}_h\big((H_{\Gamma}(\psi))^{e} - H_{\Gamma_{h}}(\psi^{e})\big)\bmu^e\|^2_{L^{2}(\p T)} \\
& \qquad + \|\bt^{\intercal}_hH_{\Gamma_{h}}(\psi^{e})\big(\bmu^e - \bmu_h\big)\|^2_{L^{2}(\p T)} \\
 & \lesssim h^{2 k_{g}}\|(H_{\Gamma}(\psi))^{e}\|_{L^{2}(\p T)}^2 + \|(H_{\Gamma}(\psi))^{e} - H_{\Gamma_{h}}(\psi^{e})\|_{L^{2}(\p T)}^2 + h^{2k_{g}}\|H_{\Gamma_{h}}(\psi^{e})\|_{L^{2}(\p T)}^2 \\
 & \lesssim h^{2k_{g}}(|\psi |_{H^{2}(\p T^\ell)}^2 + |\psi|_{H^{1}(\p T^\ell)}^2).
\end{align*}
Summing over $T\in \calT_h$ and applying  Proposition \ref{prop:Trace} yields
\[
h \sum_{T\in \calT_h} \|(\bt^{\intercal}H_{\Gamma}(\psi)\bmu)^{e} - \bt^{\intercal}_hH_{\Gamma_{h}}(\psi^{e})\bmu_h\|^2_{L^{2}(\p T)}\lesssim h^{2k_g} \sum_{T^\ell\in \calT_h^\ell} \left(|\psi|_{H^2(T^\ell)}^2 + h^2 |\psi|^2_{H^3(T^\ell)}\right)\lesssim h^{2k_g} \tbar{\psi}_H^2.
\]
Thus, \eqref{eqn:SufficesABC} holds and therefore \eqref{ApproxOper.4} holds.
\end{itemize}
\end{proof}

\subsection{Proof of Lemma \ref{lem:DifferRelationsAgain}} \label{app:DFA}
\begin{proof}\
\begin{itemize}[leftmargin=*]
\item \textit{Estimate} \eqref{F.1}: To prove the estimate,  we first write the left hand side of \eqref{F.1} in terms of $\bu := \bcurl_\Gamma\psi$, $\bv := \bcurl_\Gamma\chi$ and $\bw := \bcurl_{\Gamma_h}\chi^e$. We then prove estimates in terms of $\bu$, $\bv$, $\bw$, and finally transform the result back to the right hand side of \eqref{F.1}. We divide the proof into seven steps.

{\em Step (i), rewrite the left hand side of \eqref{F.1}}: By Definition \ref{def:2.1},
\begin{align}
\begin{split}
(H_\Gamma(\psi))^e & = (E_\Gamma(\bcurl_\Gamma\psi))^e = (E_\Gamma(\bu))^e, \\
(H_\Gamma(\chi))^e & = (E_\Gamma(\bcurl_\Gamma\chi))^e = (E_\Gamma(\bv))^e, \\
H_{\Gamma_h}(\chi^e) & = E_{\Gamma_h}(\bcurl_{\Gamma_h}\chi^e) = E_{\Gamma_h}(\bw).
\end{split} \label{F.Def}
\end{align}
Due to \eqref{F.Def}, the definition of $E_\Gamma(\cdot)$ and the symmetry of $\left(E_\Gamma(\bu)\right)^e$,
\begin{align}
\begin{split}
\big((H_\Gamma(\psi))^e, (H_\Gamma(\chi))^e - H_{\Gamma_h}(\chi^e)\big)_{\calT_h} & = \big((E_\Gamma(\bu))^e, (E_\Gamma (\bv))^e - E_{\Gamma_h}(\bw)\big)_{\calT_h} \\
& = \big((E_\Gamma(\bu))^e, (\Grad_\Gamma\bv)^e - \Grad_{\Gamma_h}\bw\big)_{\calT_h}.
\end{split} \label{F.5}
\end{align}

{\em Step (ii),expand $({\Grad}_\Gamma\bv)^e$ in \eqref{F.5}}: 
Recall the matrix notations in \eqref{notations.MN}.
By \eqref{eqn:curlChain}, there holds
\begin{align*}
\bw = \mu_h{\bf N}{\bf M}\bv^e,
\end{align*}
which implies $\bv = ({\bf L}\bw) \circ \bp^{- 1}$. Consequently,
\begin{align}
\begin{split}
\Grad_\Gamma\bv & = {\bf P}\Grad\bv{\bf P} \\
& = ({\bf L} \circ \bp^{- 1})\Grad(\bw \circ \bp^{- 1}){\bf P} + {\bf P}\nab({\bf L} \circ \bp^{- 1})(\bw \circ \bp^{- 1}){\bf P},
\end{split} \label{F.7}
\end{align}
where we use ${\bf P}{\bf L} = {\bf L}$. By the chain rule (cf.~\cite[(A.1) and (A.5)]{DemlowNeilan24}), we have
\begin{align}
\Grad(\bw \circ \bp^{- 1}){\bf P} & = (\Grad\bw{\bf P}_h{\bf N}{\bf M}) \circ \bp^{- 1}, \label{F.8} \\
{\bf P}\nab({\bf L} \circ \bp^{- 1})(\bw \circ \bp^{- 1}){\bf P} & = {\bf P}(\nab{\bf L}\bw{\bf P}_h{\bf N}{\bf M}) \circ \bp^{- 1}. \label{F.9}
\end{align}
It follows from \eqref{F.7}--\eqref{F.9} that
\begin{align}
\begin{split}
(\Grad_\Gamma\bv)^e & = {\bf L}\Grad\bw{\bf P}_h{\bf N}{\bf M} + {\bf P}\nab{\bf L}\bw{\bf P}_h{\bf N}{\bf M} \\
& = ({\bf L}\Grad\bw + {\bf P}\nab{\bf L}\bw){\bf P}_h{\bf N}{\bf M}.
\end{split} \label{F.10}
\end{align}
In addition, using $\bw = {\bf P}_h\bw$,
\begin{align*}
\Grad\bw = \Grad({\bf P}_h\bw) = {\bf P}_h\Grad\bw + \nab{\bf P}_h\bw. 
\end{align*}
By setting ${\bf H}_h = \Grad\bn_h$ (where the gradient is applied piecewise with respect to $\calT_h$), a short calculation shows $\nab{\bf P}_h\bw = - \bn_h \otimes ({\bf H}_h\bw)$, and therefore
\begin{align}
\Grad\bw = {\bf P}_h\Grad\bw - \bn_h \otimes ({\bf H}_h\bw). \label{F.13}
\end{align}
Due to \eqref{F.10} and \eqref{F.13} and the identity ${\bf P}_h {\bf N} = {\bf N}$,
\begin{align}
\begin{split}
(\Grad_\Gamma\bv)^e & = \big({\bf L}{\bf P}_h\Grad\bw - ({\bf L}\bn_h) \otimes ({\bf H}_h\bw) + {\bf P}\nab{\bf L}\bw\big){\bf P}_h{\bf N}{\bf M} \\
& = \big({\bf L}\Grad_{\Gamma_h}\bw - ({\bf L}\bn_h) \otimes ({\bf H}_h\bw) + {\bf P}\nab{\bf L}\bw\big){\bf P}_h{\bf N}{\bf M} \\
& = \Grad_{\Gamma_h}\bw + \big[{\bf L} - {\bf P}_h\big]\Grad_{\Gamma_h}\bw{\bf N}{\bf M} + \Grad_{\Gamma_h}\bw\big[{\bf N}{\bf M} - {\bf P}_h\big] \\
& \qquad - ({\bf L}\bn_h) \otimes ({\bf H}_h\bw){\bf N}{\bf M} + {\bf P}\nab{\bf L}\bw{\bf P}_h{\bf N}{\bf M}.
\end{split} \label{F.14}
\end{align}
We plug \eqref{F.14} into \eqref{F.5} to obtain
\begin{align}
\begin{split}
& \big((H_\Gamma(\psi))^e, (H_\Gamma(\chi))^e - H_{\Gamma_h}(\chi^e)\big)_{\calT_h} \\
& = \big((E_{\Gamma}(\bu))^e, \big[{\bf L} - {\bf P}_h\big]\Grad_{\Gamma_h}\bw{\bf N}{\bf M}\big)_{{\calT}_h} + \big((E_{\Gamma}(\bu))^e, \Grad_{\Gamma_h}\bw\big[{\bf N}{\bf M} - {\bf P}_h\big]\big)_{\calT_h} \\
& \qquad - \big((E_{\Gamma}(\bu))^e, ({\bf L}\bn_h) \otimes ({\bf H}_h\bw){\bf N}{\bf M}\big)_{{\calT}_h} + \big((E_\Gamma(\bu))^e, {\bf P}\nab{\bf L}\bw{\bf P}_h{\bf N}{\bf M}\big)_{\calT_h} \\
& =: I_1 + I_2 + I_3 + I_4.
\end{split} \label{F.1-Estimate}
\end{align}
We now bound $I_j$ separately for each $j \in \{1, 2, 3, 4\}$.

{\em Step (iii), estimate of $I_1$}: Note that $(E_\Gamma(\bu))^e({\bf P} - {\bf I}) = 0$. Thus, by Holder's inequality, \eqref{eqn:GeoBounds} and \eqref{eqn:muCloseToOne},
\begin{align}
\begin{split}
I_1 & = \big((E_\Gamma(\bu))^e, \big[{\bf L} - {\bf P}_h\big]\Grad_{\Gamma_h}\bw{\bf N}{\bf M}\big)_{\calT_h} \\
& = \big((E_\Gamma(\bu))^e, 
(\mu_h^{- 1}({\bf P} - {\bf P}_h) - {d \mu_h^{-1 }}{\bf H} + \mu_h^{- 1}(1 - \mu_h){\bf P}_h)\Grad_{\Gamma_h}\bw{\bf N}{\bf M}\big)_{\calT_h} \\
& = \big((E_\Gamma(\bu))^e, \mu_h^{- 1}({\bf P} - {\bf I})\Grad_{\Gamma_h}\bw{\bf N}{\bf M}\big)_{\calT_h} \\
& \qquad - \big((E_\Gamma(\bu))^e, {d \mu_h^{- 1}}{\bf H}\Grad_{\Gamma_h}\bw{\bf N}{\bf M}\big)_{\calT_h} + \big((E_\Gamma(\bu))^e, \mu_h^{- 1}(1 - \mu_h)\Grad_{\Gamma_h}\bw{\bf N}{\bf M}\big)_{\calT_h} \\
& \lesssim h^{k_g + 1}\|(E_\Gamma(\bu))^e\|_{L^2(\Gamma_h)}\|\bw\|_{H^1(\Gamma_h)}.
\end{split} \label{F.1-Estimate.I}
\end{align}

{\em Step (iv), estimate of $I_2$}: Writing ${\bf N} - {\bf P}_h = - (\bn \cdot \bn_h)^{- 1}({\bf P}_h \bn) \otimes \bn_h$, and applying \eqref{eqn:LMBound} and \eqref{eqn:GeoBounds}, we obtain
\begin{align}
\begin{split}
I_2 & = \big((E_\Gamma(\bu))^e, \Grad_{\Gamma_h}\bw\big[{\bf N}{\bf M} - {\bf P}_h\big]\big)_{\calT_h} \\
& \lesssim \big((E_\Gamma(\bu))^e, \Grad_{\Gamma_h}\bw\big[{\bf N} - {\bf P}_h\big]\big)_{\calT_h} + h^{k_g + 1}\|(E_\Gamma(\bu))^e\|_{L^2(\Gamma_h)}\|\bw\|_{H^1(\Gamma_h)} \\
& = - \bigg((E_\Gamma(\bu))^e, \frac{1}{\bn \cdot \bn_h}(\Grad_{\Gamma_h}\bw{\bf P}_h\bn) \otimes \bn_h\bigg)_{\calT_h} 
+ h^{k_g + 1}\|(E_\Gamma(\bu))^e\|_{L^2(\Gamma_h)}\|\bw\|_{H^1(\Gamma_h)} \\
& = \bigg((E_\Gamma(\bu))^e(\bn -\bn_h), \frac{1}{\bn \cdot \bn_h}\Grad_{\Gamma_h}\bw{\bf P}_h(\bn -\bn_h)\bigg)_{\calT_h} \\
& \qquad + h^{k_g + 1}\|(E_\Gamma(\bu))^e\|_{L^2(\Gamma_h)}\|\bw\|_{H^1(\Gamma_h)} \\
& \lesssim h^{k_g + 1}\|(E_\Gamma(\bu))^e\|_{L^2(\Gamma_h)}\|\bw\|_{H^1(\Gamma_h)}.
\end{split} \label{F.1-Estimate.II}
\end{align}

{\em Step (v), estimate of $I_3$}: By the Remark after \cite[Lemma 3.2]{LarssonLarson17}, for ${\bm \chi} \in [W_1^1(\Gamma_h)]^3$, there holds
\begin{equation} \label{eqn:ChiTrick}
|({\bm \chi}, {\bf P}_h \bn)_{\calT_h}| \lesssim 
h^{k_g + 1}\|{\bm \chi}\|_{W_1^1(\Gamma_h)}.
\end{equation}
Because $({\bf P} - {\bf P}_h)(\bn - \bn_h) = - {\bf P}\bn_h - {\bf P}_h\bn$, there  also holds
\begin{equation} \label{eqn:ChiTrick2}
|({\bm \chi}, {\bf P}\bn_h)_{\calT_h}| 
= |({\bm \chi}, {\bf P}_h\bn)_{\calT_h} + ({\bm \chi}, ({\bf P} - {\bf P}_h)(\bn - \bn_h))_{\calT_h}| \lesssim h^{k_g + 1}\|{\bm \chi}\|_{W_1^1(\Gamma_h)}.
\end{equation}
We then apply \eqref{eqn:GeoBounds} and \eqref{eqn:ChiTrick2} to obtain
\begin{align}
\begin{split}
I_3 & = - \big((E_\Gamma(\bu))^e, ({\bf L}\bn_h) \otimes ({\bf H}_h\bw){\bf N}{\bf M}\big)_{\calT_h} \\
& = - \big((E_\Gamma(\bu))^e, \mu_h^{- 1}({\bf P}\bn_h) \otimes ({\bf H}_h\bw){\bf N}{\bf M}\big)_{\calT_h} \\
& \qquad 
+ \big((E_\Gamma(\bu))^e, d\mu_h^{- 1}({\bf H}\bn_h) \otimes ({\bf H}_h\bw){\bf N}{\bf M}\big)_{\calT_h} \\
& = - \big(\mu_h^{- 1}(E_\Gamma(\bu))^e{\bf M}{\bf N}^\intercal{\bf H}_h\bw, {\bf P}\bn_h\big)_{\calT_h} \\
& \qquad + \big((E_\Gamma(\bu))^e, d\mu_h^{- 1}({\bf H}\bn_h) \otimes ({\bf H}_h\bw){\bf N}{\bf M}\big)_{\calT_h} \\
%
%
& \lesssim  h^{k_g + 1}\|(E_\Gamma(\bu))^e\|_{H^1(\Gamma_h)}\|\bw\|_{H^1(\Gamma_h)}.
\end{split} \label{F.1-Estimate.III}
\end{align}

{\em Step (vi), estimate of $I_4$}: Using \cite[(A.6)]{DemlowNeilan24},
\[
\nab{\bf L}\bw = - \mu_h^{- 1}(({\bf L}\bw) \otimes \nab\mu_h + \bn \otimes ({\bf H}\bw) + ({\bf H}\bw) \otimes \bn + (\bn \cdot \bw){\bf H} + d\nab{\bf H}\bw),
\]
which implies
\begin{align}
\begin{split}
& {\bf P}\nab{\bf L}\bw{\bf P}_h \\
& = - \mu_h^{- 1}(({\bf L}\bw) \otimes \nab\mu_h{\bf P}_h + ({\bf H}\bw) \otimes ({\bf P}_h\bn) + (\bw \cdot ({\bf P}_h \bn)){\bf H}{\bf P}_h + d{\bf P}\nab{\bf H}\bw{\bf P}_h), \label{F.20}
\end{split}
\end{align}
where we used ${\bf P}{\bf L} = {\bf L}$, ${\bf P}\bn = 0$, ${\bf P}{\bf H} = {\bf H}$ and $\bw = {\bf P}_h\bw$. Due to \eqref{F.20}, \eqref{eqn:GeoBounds}, the symmetry of $(E_\Gamma(\bu))^e$ and by \eqref{eqn:ChiTrick}, we obtain
\begin{align}
\begin{split}
|I_4| & = \Big|\big((E_\Gamma(\bu))^e, {\bf P}\nab{\bf L}\bw{\bf P}_h{\bf N}{\bf M}\big)_{\calT_h}\Big| \\
& \le \Big|\big(\mu_h^{- 1}{\bf P}_h(E_\Gamma(\bu))^e{\bf L}\bw, \nab\mu_h\big)_{\calT_h}\Big| 
+ \Big|\big(\mu_h^{- 1}(E_\Gamma(\bu))^e{\bf H}\bw, {\bf P}_h\bn\big)_{\calT_h}\Big| \\
& \qquad + \Big|\big(\mu_h^{- 1}(E_\Gamma(\bu))^e : ({\bf H}{\bf P}_h)\bw, {\bf P}_h\bn\big)_{\calT_h}\Big|
+ \Big|\big(\mu_h^{- 1}(E_\Gamma(\bu))^e, d{\bf P}\nab{\bf H}\bw{\bf P}_h\big)_{\calT_h}\Big| \\
& \lesssim \Big|\big(\mu_h^{- 1}{\bf P}_h(E_\Gamma(\bu))^e{\bf L}\bw, \nab\mu_h\big)_{\calT_h}\Big| + h^{k_g + 1}\|(E_\Gamma(\bu))^e\|_{H^1(\Gamma_h)}\|\bw\|_{H^1(\Gamma_h)}.
\end{split} \label{F.1-Estimate.IV}
\end{align}
We now estimate the remaining term in the right hand side of \eqref{F.1-Estimate.IV}.
Applying the product rule and Jacobi's formula (cf.~\cite[p.20]{DemlowNeilan24}),
we calculate
\begin{align*}
\nab\mu_h = {\bf H}\bn_h + {\bf H}_h\bn - {\rm tr}({\bf H})\bn + \mathcal{O}(h^{k_g + 1}). 
\end{align*}

Therefore by \eqref{eqn:ChiTrick},
\begin{align}
\begin{split}
& \Big|\big(\mu_h^{- 1}{\bf P}_h(E_\Gamma(\bu))^e{\bf L}\bw, \nab\mu_h\big)_{\calT_h}\Big| \\
& \lesssim \Big|\big((E_\Gamma(\bu))^e{\bf L}\bw, {\bf P}_h({\bf H}\bn_h + {\bf H}_h\bn)\big)_{\calT_h}\Big| \\
& \qquad + \Big|\big({\rm tr}({\bf H})(E_\Gamma(\bu))^e{\bf L}\bw, {\bf P}_h\bn\big)_{\calT_h}\Big| + h^{k_g + 1}\|(E_\Gamma(\bu))^e\|_{L^2(\Gamma_h)}\|\bw\|_{L^2(\Gamma_h)} \\
& \lesssim \Big|\big((E_\Gamma(\bu))^e{\bf L}\bw, {\bf P}_h({\bf H}\bn_h + {\bf H}_h\bn)\big)_{\calT_h}\Big| + h^{k_g + 1}\|(E_\Gamma(\bu))^e\|_{H^1(\Gamma_h)}\|\bw\|_{H^1(\Gamma_h)}.
\end{split} \label{F.21}
\end{align}
Note that ${\bf H}\bn_h = {\bf H}{\bf P}\bn_h$, and so by \eqref{eqn:ChiTrick2},
\begin{align}
\begin{split}
\Big|\big((E_\Gamma(\bu))^e{\bf L}\bw, {\bf P}_h{\bf H}\bn_h\big)_{\Gamma_h}\Big| & = \Big|\big({\bf H}{\bf P}_h(E_\Gamma(\bu))^e{\bf L}\bw, {\bf P}\bn_h\big)_{\Gamma_h}\Big| \\
& \lesssim h^{k_g + 1}\|(E_\Gamma(\bu))^e\|_{H^1(\Gamma_h)}\|\bw\|_{H^1(\Gamma_h)}.
\end{split} \label{F.22}
\end{align}
Likewise, ${\bf H}_h\bn = {\bf H}_h{\bf P}_h\bn$ and \eqref{eqn:ChiTrick} imply
\begin{align}
\begin{split}
\Big|\big((E_\Gamma(\bu))^e{\bf L}\bw, {\bf P}_h{\bf H}_h\bn\big)_{\calT_h}\Big| & = \Big|\big({\bf H}_h{\bf P}_h(E_\Gamma(\bu))^e{\bf L}\bw, {\bf P}_h\bn\big)_{\calT_h}\Big| \\
& \lesssim h^{k_g + 1}\|(E_\Gamma(\bu))^e\|_{H^1(\Gamma_h)}\|\bw\|_{H^1(\Gamma_h)}.
\end{split} \label{F.23}
\end{align}
It follows from \eqref{F.1-Estimate.IV}--\eqref{F.23} that
\begin{align}
|I_4| \lesssim h^{k_g + 1}\|(E_\Gamma(\bu))^e\|_{H^1(\Gamma_h)}\|\bw\|_{H^1(\Gamma_h)}. \label{F.1-Estimate.IV2}
\end{align}

{\em Step (vii), combining estimates}: Combining  the estimates \eqref{F.1-Estimate}, \eqref{F.1-Estimate.I}, \eqref{F.1-Estimate.II}, \eqref{F.1-Estimate.III} and \eqref{F.1-Estimate.IV2} yields
\begin{align}\label{eqn:LastStep}
\big((H_\Gamma(\psi))^e, (H_\Gamma(\chi))^e - H_{\Gamma_h}(\chi^e)\big)_{\calT_h} \lesssim h^{k_g + 1}\|(E_\Gamma(\bu))^e\|_{H^1(\Gamma_h)}\|\bw\|_{H^1(\Gamma_h)}.
\end{align}
By \eqref{F.Def} and \eqref{eqn:NormEquivy},
\begin{align*}
\|(E_\Gamma(\bu))^e\|_{H^1(\Gamma_h)} & = \|(H_\Gamma(\psi))^e\|_{H^1(\Gamma_h)} \lesssim \|H_\Gamma(\psi)\|_{H^1(\Gamma)} \lesssim \|\psi\|_{H^3(\Gamma)}, \\
\|\bw\|_{H^1(\Gamma_h)} & = \|\bcurl_{\Gamma_h}\chi^e\|_{H^1(\Gamma_h)} \lesssim \|\bcurl_\Gamma\chi\|_{H^1(\Gamma)} \lesssim \|\chi\|_{H^3(\Gamma)}.
\end{align*}
Inserting these inequalities into \eqref{eqn:LastStep} completes the proof of \eqref{F.1}.

\item \textit{Estimate} \eqref{F.2}: By \eqref{eqn:gradChain},
\[
(\nab_\Gamma\chi)^e - \nab_{\Gamma_h}\chi^e = ({\bf I} - {\bf P}_h({\bf P} - d{\bf H}))(\nab_\Gamma\chi)^e = ({\bf I} - {\bf P}_h{\bf P} + d{\bf P}_h{\bf H})(\nab_\Gamma\chi)^e.
\]
Note that
\begin{align*}
{\bf I} - {\bf P}_h{\bf P} & = {\bf I} - ({\bf I} - \bn_h \otimes \bn_h)({\bf I} - \bn \otimes \bn) \\
& = \bn \otimes \bn + \bn_h \otimes \bn_h - (\bn \cdot \bn_h)\bn_h \otimes \bn \\
& = (\bn - \bn_h) \otimes (\bn - \bn_h) + (1 - \bn \cdot \bn_h)\bn_h \otimes \bn + \bn \otimes \bn_h,
\end{align*}
and
\[
\big((\nab_\Gamma\psi)^e, \bn \otimes \bn_h(\nab_\Gamma\chi)^e\big)_{\Gamma_h} = 0.
\]
Then, by a change of integration domain, \eqref{eqn:GeoBounds} and \eqref{eqn:one-nnh},
\begin{align*}
\big((\nab_\Gamma\psi)^e, (\nab_\Gamma\chi)^e - \nab_{\Gamma_h}\chi^e\big)_{\Gamma_h} & \lesssim h^{2k_g}\|\psi\|_{H^1(\Gamma)}\|\chi\|_{H^1(\Gamma)} + h^{k_g + 1}\|\psi\|_{H^1(\Gamma)}\|\chi\|_{H^1(\Gamma)} \\
& \lesssim h^{k_g + 1}\|\psi\|_{H^1(\Gamma)}\|\chi\|_{H^1(\Gamma)}.
\end{align*}

\item \textit{Estimate} \eqref{F.3}: 
As $\psi, \chi \in H^4(\Gamma)$, both
$H_\Gamma(\psi)$ and $\nab_\Gamma\chi$ are continuous, which imply $\avg{\bt_{T^{\ell}}^{\intercal}H_{\Gamma}(\psi)\bmu_{T^{\ell}}} = \bt_{T^{\ell}}^{\intercal}H_{\Gamma}(\psi)\bmu_{T^{\ell}}$ and $(\jump{\nab_{\Gamma}\chi})^e = 0$. Thus, by \eqref{eqn:gradChain},
\begin{align}
\begin{split}
& \big((\avg{\bt_{T^{\ell}}^{\intercal}H_{\Gamma}(\psi)\bmu_{T^{\ell}}})^{e}, (\jump{\nab_{\Gamma}\chi})^{e} - \jump{\nab_{\Gamma_{h}}\chi^{e}}\big)_{\calE_h} \\
& = - \big((\bt_{T^{\ell}}^{\intercal}H_{\Gamma}(\psi)\bmu_{T^{\ell}})^{e}, \jump{{\bf P}_h({\bf P} - d{\bf H})(\nab_\Gamma\chi)^e}\big)_{\calE_h} \\
& = \big((\bt_{T^{\ell}}^{\intercal}H_{\Gamma}(\psi)\bmu_{T^{\ell}})^{e}, \jump{d{\bf P}_h{\bf H}(\nab_\Gamma\chi)^e}\big)_{\calE_h} - \big((\bt_{T^{\ell}}^{\intercal}H_{\Gamma}(\psi)\bmu_{T^{\ell}})^{e}, \jump{{\bf P}_h(\nab_\Gamma\chi)^e}\big)_{\calE_h} \\
& =: I + II.
\end{split} \label{F.3-Estimate}
\end{align}
First, we estimate $I$ in \eqref{F.3-Estimate}. Using
\[
{\bf P}(\bmu_+ + \bmu_-) = {\bf P}\bmu_+ - \bmu_{T^\ell+}^e + {\bf P}\bmu_- - \bmu_{T^\ell-}^e,
\]
and \eqref{GeoApprox.3}, \eqref{eqn:GeoBounds}, \eqref{tracineq} and a change of integration domain, there holds
\begin{align}
\begin{split}
I 
& = \big((\bt_{T^{\ell}}^{\intercal}H_{\Gamma}(\psi)\bmu_{T^{\ell}})^{e}, ({\bf P}(\bmu_+ + \bmu_-)) \cdot (d{\bf H}(\nab_\Gamma\chi\big)^e))_{\calE_h} \\
& \lesssim h^{k_g+1} \Big(\|{\bf P}\bmu_+ - \bmu_{T^\ell+}^e\|_{L^\infty(\calE_h)} + \|{\bf P}\bmu_- - \bmu_{T^\ell-}^e\|_{L^\infty(\calE_h)}\Big) 
\|(\nab_\Gamma^2\psi)^e\|_{L^2(\calE_h)}\|(\nab_\Gamma\chi)^e\|_{L^2(\calE_h)} \\
& \lesssim h^{2k_g + 1}\|\psi\|_{H^3(\Gamma)}\|\chi\|_{H^2(\Gamma)},
\end{split} \label{F.3-Estimate.I}
\end{align}
where we also used $\bmu \cdot \bn_h = 0$, ${\bf P}^\intercal = {\bf P}$ and ${\bf PH} = {\bf P}$. 

Next, we estimate $II$ in \eqref{F.3-Estimate}. 
We make change of variables to obtain
\begin{align*}
II
& = - \big((\bt_{T^{\ell}}^{\intercal}H_{\Gamma}(\psi)\bmu_{T^{\ell}})^{e}, \jump{{\bf P}_h(\nab_\Gamma\chi)^e}\big)_{\calE_h} \\
& = 
-\sum_{e^\ell\in \calE^\ell_h} \left(\mu_e^{-1} (\bt^\intercal_{e^\ell} H_\Gamma(\psi) \bmu_{e^\ell}),({\bf P}\bmu^\ell_++{\bf P}\bmu_-^\ell)\cdot \nab_\Gamma \chi\right)_{e^\ell}.
\end{align*}
Recall  $\bmu_{\pm} = \bt_{\pm} \times \bn_{\pm}$, we have
\begin{align*}
{\bf P}\bmu^\ell_{\pm} 
&= {\bf P}(\bt_{{\pm}}^\ell \times \bn_{{\pm}}^\ell)
 = (\bn\cdot \bn_{\pm}^\ell)(\bt_{\pm}^\ell\times \bn) + (\bn\cdot \bt^\ell_{\pm})(\bn\times \bn_{\pm}^\ell).
\end{align*}
Consequently, since $\bt_+ = -\bt_-$, there holds
\begin{align*}
{\bf P}\bmu^\ell_+ + 
{\bf P}\bmu^\ell_{-}
& = 
  (\bn\cdot \bn_{+}^\ell)(\bt_{+}^\ell\times \bn) + (\bn\cdot \bt^\ell_{+})(\bn\times \bn_{+}^\ell)
 -  (\bn\cdot \bn_{-}^\ell)(\bt_{+}^\ell\times \bn) - (\bn\cdot \bt^\ell_{+})(\bn\times \bn_{-}^\ell)\\
& = (\bt_+^\ell \times \bn)\left(\bn\cdot \bn_+^\ell - \bn\cdot \bn_-^\ell\right)
+(\bn\cdot \bt^\ell_+)\left(\bn\times \bn_+^\ell - \bn\times \bn_-^\ell\right)\\
& = (\bt_+^\ell \times \bn)\left(\bn\cdot \bn_+^\ell-1 - \left((\bn\cdot \bn_-^\ell)-1\right)\right)\\
&\qquad+\left(\bn\cdot (\bt^\ell_+-\bt)\right)\left(\bn\times (\bn_+^\ell-\bn) - \bn\times (\bn_-^\ell-\bn)\right),
 \end{align*}
and so $\left|{\bf P}\bmu^\ell_+ + 
{\bf P}\bmu^\ell_{-}\right|\lesssim h^{2k_g}$.
 We then apply trace inequalities to conclude
 \begin{align}\label{eqn:Bleh}
II
\lesssim h^{2k_g-1} \|\psi\|_{H^3(\Gamma)} \|\chi\|_{H^2(\Gamma)}.
\end{align}

Finally, the result follows from \eqref{F.3-Estimate}--\eqref{eqn:Bleh}.
\end{itemize}
\end{proof}

\section{Proof of Lemma \ref{thm:RegBihar}}\label{app-Regularity}

To prove the regularity estimates, we require an intermediate result.
\begin{lemma}\label{lem:PrelimSur}
Let $m\in \{0,1\}$. For any $\chi\in H^{-m}(\Gamma)$, there exists $\br\in \bH^{1-m}(\Gamma)$ such that
${\rm curl}_\Gamma \br = -\chi$ and $\|\br\|_{{\bH}^{1-m}(\Gamma)}\lesssim \|\chi\|_{H^{-m}(\Gamma)}$.
\end{lemma}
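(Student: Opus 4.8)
The plan is to reduce the underdetermined first-order problem to a single scalar elliptic equation by exploiting the identity ${\rm curl}_\Gamma\bcurl_\Gamma w = \Delta_\Gamma w$. Concretely, I would first seek a scalar potential $w:\Gamma\to\bbR$ with $\int_\Gamma w = 0$ solving the surface Poisson problem $-\Delta_\Gamma w = \chi$, and then define $\br := \bcurl_\Gamma w = \bn\times\nab_\Gamma w$. Because $\bn\cdot\bcurl_\Gamma w = 0$, the field $\br$ is automatically tangential, as required by the claim. The simply-connected, smooth, closed geometry of $\Gamma$ makes this scalar reduction the natural route, since there are no nontrivial harmonic fields to obstruct it.

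For the scalar problem I would treat the two values of $m$ together via elliptic theory on the smooth compact surface $\Gamma$. When $m=1$, the datum $\chi\in H^{-1}(\Gamma)$ yields, by Lax--Milgram on the mean-zero subspace (using the surface Poincar\'e inequality), a unique $w\in H^1(\Gamma)\cap L^2_0(\Gamma)$ with $\|w\|_{H^1(\Gamma)}\lesssim\|\chi\|_{H^{-1}(\Gamma)}$; when $m=0$, standard elliptic regularity for $\Delta_\Gamma$ on a smooth closed surface upgrades the $L^2$ datum to $w\in H^2(\Gamma)$ with $\|w\|_{H^2(\Gamma)}\lesssim\|\chi\|_{L^2(\Gamma)}$. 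In both cases $w\in H^{2-m}(\Gamma)$ with $\|w\|_{H^{2-m}(\Gamma)}\lesssim\|\chi\|_{H^{-m}(\Gamma)}$, so $\br=\bn\times\nab_\Gamma w$ lies in $\bH^{1-m}(\Gamma)$ and satisfies the claimed stability bound $\|\br\|_{\bH^{1-m}(\Gamma)}\lesssim\|\nab_\Gamma w\|_{H^{1-m}(\Gamma)}\lesssim\|\chi\|_{H^{-m}(\Gamma)}$, the multiplication by the smooth unit normal $\bn$ being harmless. I note that solvability of the Poisson problem requires the compatibility $\langle\chi,1\rangle=0$; this is met by the dual data arising in Lemma \ref{thm:RegBihar}, where for $m=1$ the datum $-\Delta_\Gamma(\phi-\phi_h^\ell)$ has vanishing mean by the divergence theorem on the boundaryless surface.

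It then remains to confirm that ${\rm curl}_\Gamma\br=-\chi$. For $m=0$ this is the pointwise computation
\[
{\rm curl}_\Gamma\bcurl_\Gamma w = {\rm div}_\Gamma\big((\bn\times\nab_\Gamma w)\times\bn\big) = {\rm div}_\Gamma(\nab_\Gamma w) = \Delta_\Gamma w = -\chi,
\]
where I use the vector triple-product identity $(\bn\times\nab_\Gamma w)\times\bn = (\bn\cdot\bn)\nab_\Gamma w - (\bn\cdot\nab_\Gamma w)\bn = \nab_\Gamma w$. For $m=1$ the same identity must be read distributionally: testing against $v\in H^1(\Gamma)$ and integrating by parts on the closed surface gives $\langle{\rm curl}_\Gamma\br,v\rangle = -\int_\Gamma\br\cdot\bcurl_\Gamma v = -\int_\Gamma\nab_\Gamma w\cdot\nab_\Gamma v = -\langle\chi,v\rangle$, where I invoke the adjoint relation $\int_\Gamma({\rm curl}_\Gamma\bv)\,\psi = -\int_\Gamma\bv\cdot\bcurl_\Gamma\psi$ (from ${\rm curl}_\Gamma\bv = {\rm div}_\Gamma(\bv\times\bn)$ and the scalar triple product), the identity $\bcurl_\Gamma w\cdot\bcurl_\Gamma v = \nab_\Gamma w\cdot\nab_\Gamma v$, and the weak form of the scalar equation. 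I expect the only genuinely delicate point to be precisely this low-regularity case $m=1$: one must carefully justify the adjoint integration-by-parts formula, the distributional reading of ${\rm curl}_\Gamma\br=-\chi$, and the mean-value compatibility that renders the auxiliary scalar problem well posed; the $m=0$ case and all the norm bounds are then routine consequences of elliptic regularity.
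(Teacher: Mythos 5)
Your proposal is correct and follows essentially the same route as the paper: solve $-\Delta_\Gamma w=\chi$ and set $\br=\bcurl_\Gamma w$, so that ${\rm curl}_\Gamma\bcurl_\Gamma w=\Delta_\Gamma w=-\chi$, with the stability bound inherited from elliptic regularity. Your additional remarks on the mean-value compatibility $\langle\chi,1\rangle=0$ (which the paper leaves implicit) and the distributional reading for $m=1$ are sound refinements of the same argument.
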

\begin{proof}
Let $\psi\in H^1(\Gamma)$ satisfy $-\Delta_\Gamma \psi = \chi$. Elliptic regularity yields
$\psi\in H^{2-m}(\Gamma)$ and $\|\psi\|_{H^{2-m}(\Gamma)}\lesssim \|\chi\|_{H^{-m}(\Gamma)}$ \cite[Lemma 3.2]{DziukElliott13}.
We then set $\br = \bcurl_\Gamma  \psi\in \bH^{1-m}(\Gamma)$, so that
\[
{\rm curl}_\Gamma \br = {\rm curl}_\Gamma \bcurl_\Gamma \psi = \Delta_\Gamma \psi = - \chi,
\]
and $\|\br\|_{H^{1-m}(\Gamma)}\lesssim \|\psi\|_{H^{2-m}(\Gamma)}\lesssim \|\chi\|_{H^{-m}(\Gamma)}$.
\end{proof}

\begin{proof}[Proof of Lemma \ref{thm:RegBihar}]
Using Lemma \ref{lem:PrelimSur}, we let $\br\in \bH^{1-m}(\Gamma)$ satisfy ${\rm curl}_\Gamma \br = -\chi$. 
Setting $\bv = \bcurl_\Gamma u$,
we then have
\begin{align*}
-{\bf P}{\rm div}_\Gamma E_\Gamma(\bv) + \nab_{\Gamma} s & = \br,\\
{\rm div}_\Gamma \bv & = 0,
\end{align*}
for some $s\in L^2(\Gamma)$.
Using the elliptic regularity of the surfaces Stokes problem $\bv\in H^{3-m}(\Gamma)$ with $\|\bv\|_{H^{3-m}(\Gamma)}\lesssim \|\br\|_{H^{1-m}(\Gamma)}$,
along with the Poincare inequality \cite[Theorem 2.12]{DziukElliott13},
we have
\begin{align*}
\|u\|_{H^{4-m}(\Gamma)}\lesssim \|\nab_\Gamma u\|_{H^{3-m}(\Gamma)}\lesssim \|\bn\times \bv\|_{H^{3-m}(\Gamma)}\lesssim \|\br\|_{H^{1-m}(\Gamma)}\lesssim \|\chi\|_{H^{-m}(\Gamma)}.
\end{align*}
\end{proof}

\end{document}